\newcommand{\catD}{{D}}
\newcommand{\dbc}[1]{{D}^b(#1)}
\newcommand{\dpc}[1]{{D}^+(#1)}
\newcommand{\dmc}[1]{{D}^-(#1)}
\newcommand{\cdbc}[1]{{D}^b_c(#1)}
\newcommand{\Hom}{{\operatorname{Hom}}}
\newcommand{\im}{{\operatorname{Im}}}
\newcommand{\Perf}{{\operatorname{Perf }}}
\newcommand{\VB}{{\operatorname{VB }}}
\newcommand{\FM}{{\operatorname{FM}}}
\newcommand{\sCM}{{\underline{\text{CM}}}}
\newcommand{\CM}{{\text{CM}}}
\newcommand{\lotimes}{{\,\stackrel{\mathbf L}{\otimes}\,}}
\DeclareMathOperator{\Pic}{{Pic}}
\DeclareMathOperator{\Spec}{{Spec}}
\DeclareMathOperator{\Img}{{Im}}
\newcommand{\bbQ}{{\mathbb Q}}
\newcommand{\bbZ}{{\mathbb Z}}
\newcommand{\bbP}{{\mathbb P}}
\newcommand{\cO}{{\mathcal O}}
\newcommand{\bR}{{\mathbf R}}
\newcommand{\cplx}[1]{{{\mathcal #1}^{\scriptscriptstyle\bullet}}}
\newcommand{\iso}{{\,\stackrel {\textstyle\sim}{\to}\,}}
\newtheorem{thm}{Theorem}[section]
\newtheorem*{thm*}{Theorem}
\newtheorem{cor}[thm]{Corollary}
\newtheorem{lem}[thm]{Lemma}
\newtheorem{prop}[thm]{Proposition}
\theoremstyle{definition}
\theoremstyle{remark}
\newtheorem{rema}[thm]{Remark}
\newtheorem{exe}[thm]{Example}
\numberwithin{equation}{section} 
\begin{document}
\title{Derived equivalences and Kodaira fibers}

\thanks {{\it Author's address: }Departamento de Matem\'aticas, Universidad de Salamanca, Plaza de la Merced 1-4, 37008, Salamanca, tel: +34 923294456; fax +34 923294583.\\
Work supported by the research project MTM2013-45935-P (MINECO)}
\subjclass[2000]{Primary: 18E30, 14F05; Secondary: 18E25, 14H52} \keywords{Fourier-Mukai partners, elliptic
curve, Kodaira degenerations, K-groups, matrix factorizations, category of singularities, Cohen-Macaulay modules}
\date{\today}


\author[A. C. L\'opez Mart\'{\i}n]{Ana Cristina L\'opez Mart\'{\i}n}
\email{anacris@usal.es}
\author[C. Tejero Prieto]{Carlos Tejero Prieto}
\email{carlost@usal.es}
\address{Departamento de Matem\'aticas and Instituto Universitario de F\'{\i}sica Fundamental y Matem\'aticas
(IUFFyM), Universidad de Salamanca, Plaza de la Merced 1-4, 37008
Salamanca, Spain.}

\begin{abstract}  
We give necessary conditions for two (including non-reduced and multiple) Kodaira curves to be derived equivalent.
We classify Fourier-Mukai partners of any reduced Kodaira curve. We prove that the derived category of singularities of any non-reduced and non-multiple Kodaira curve is idempotent complete.  \end{abstract}

\maketitle
\setcounter{tocdepth}{1}


\section{Introduction}

For a scheme $X$, we denote by $\text{FM}(X)$ the set of isomorphism classes of (Fourier-Mukai) partners of $X$, that is,
$$\text{FM}(X):=\{ Y \text{ schemes  such that } \catD(X)\simeq \catD(Y)\}/\simeq.$$ 

When one considers  $\catD(X)$  as a tensor triangulated category, a result by Thomason \cite{Thom97} proves that $( \catD(X), \otimes, \mathcal{O}_X)$ is rigidly-compactly generated and then we can recover completely the scheme $X$ from the monoidal structure of $ \catD(X)$. Thus, if $\Phi\colon \catD(X)\simeq \catD(Y)$ is an equivalence respecting $\otimes$-products, then $X\simeq Y$, and consequently $\text{FM}(X)=\{ X\}$ for every scheme $X$.  Of course, this is no longer true when one forgets the tensor structure of $\catD(X)$ and, besides its interest for applications in  Physics, it is an interesting mathematical problem to determine the set $\text{FM}(X)$ for a given $X$. In the last thirty years, much work has been done in this direction  and by now there are many classical results. The first examples of non-isomorphic (Fourier-Mukai) partners were constructed among abelian varieties
and K3 surfaces \cite{Muk81, Muk87a, Or97, Pol96}, varieties connected by some kinds of flops \cite{BO95, Bri02, Chen02}, and many others where $Y$ is a moduli space of certain kind of sheaves on $X$ \cite{BBH08, Bri98, BrM02}. To the contrary, Bondal and Orlov show in \cite{BO01}
that if $X$ is a smooth projective variety with ample or antiample canonical sheaf, then $X$ can be entirely reconstructed
from the $k$-linear graded structure of
 $\cdbc{X}$, so that  in this case  $\text{FM}(X) = \{X\}$.  Other important contributions to this problem are due to Bridgeland-Maciocia \cite{BrM01}, Kawamata \cite{Kaw02a, Kaw02b}, Uehara \cite{Ue04, Ue12, Ue15}, Favero \cite{Fav} and Fabrice \cite{Fabri}. Most of these articles are devoted to smooth projective varieties.
 
In the present article we are concerned with  the problem of determining the Fourier-Mukai partners of  (not necessarily neither smooth nor reduced) one-dimensional \ schemes. Although much less is known for singular varieties, we do have a generalization of  the classical Bondal-Orlov reconstruction theorem for Gorenstein schemes due to  C. and F. Sancho de Salas \cite{SS10}. Like in the smooth context, their result shows that  the most interesting case to study, at least for Gorenstein schemes, is again the case of Fano or anti-Fano schemes.  Thus we focus our attention in the case of projective Gorenstein curves of genus one with trivial dualising sheaf.

Let $k$ be an algebraically closed  field of characteristic zero (see \cite{AnKrWa} for some results about Fourier-Mukai partners of genus one curves over  an non-algebraically closed field). The computation of Fourier-Mukai partners for genus one curves over $k$ started with the result by Hille and Van den Bergh who deal with the classical case of smooth elliptic curves. They prove \cite{HiVdB} that any Fourier-Mukai partner of a smooth elliptic curve over $k$ is isomorphic to itself.
In \cite{LM14}, the first author of this article extends that result to the case of Gorenstein reduced curves.

In this paper, we continue our study of Fourier-Mukai partners for other genus one Gorenstein curves. More concretely, we are interested in answering the following question:  is it possible to have non-isomorphic Kodaira curves with equivalent derived categories? 
In this line we prove Theorem \ref{t1} and Theorem \ref{t2}. Part (1) in Theorem \ref{t1} is new since we consider all Kodaira fibers, which includes curves that are non-reduced  and even multiple curves. Part (2) provides a new proof for the result given in \cite{LM14}. We no longer need to use neither the equivalence $\overline{D_\text{sg}(X)}\simeq \oplus_{i=1}^n\sCM(\hat{\mathcal{O}} _{x_i})$ given in Theorem \ref{t:equivalencia} nor the classification of Cohen-Macaulay modules. As an important technical tool we see how the Picard group is determined from the derived category.   On the other hand, very little is known about the category of singularities $D_\text{sg}(X)$ for a scheme $X$ with non-isolated singularities. If $X$ is a non-reduced and non-multiple Kodaira curve, Theorem \ref{t2} proves that $D_\text{sg}(X)$ is idempotent complete.
Even if our main results are just for curves, all along the paper we provide interesting results concerning the general theory of integral functors and Fourier-Mukai partners. 

Kodaira curves are defined as the possible fibers appearing in a smooth elliptic surface. They are really important because much of what is true for surfaces generalizes to a higher dimensional elliptic fibration, that is, a projective flat morphism  $Z\to B$ of schemes whose generic fiber is a smooth elliptic curve. Roughly speaking,  what is true puntually on the base curve in the case of surfaces becomes true generically in codimension 1. For instance, Kodaira's classification of singular fibers works over the generic point of each irreducible component of the discriminant locus of the fibration. In the case of elliptic threefolds  Miranda even proved that the non-Kodaira fibers are contractions of Kodaira fibers. On the other hand, one should point out that elliptic fibrations have been used is string theory, notably in connection with mirror symmetry in Calabi-Yau manifolds and $D$-branes (see \cite{BBH08} for a good survey). Some of the classical examples of families of Calabi-Yau manifolds for which there is a description of the mirror family are elliptic fibrations \cite{Can91}. Moreover there is a relative Fourier-Mukai transform for most elliptic fibrations \cite{HMP02, HLS08} that can be
understood in terms of duality in string theory \cite{Don98,
DoPan03, DoPan12} or D-brane theory. 
More generally, due to the interpretation of B-type D-branes as objects of the derived category and  to
Kontsevich's  homological mirror symmetry proposal \cite{Kon95}, one expects the
Fourier-Mukai transform (or its relative version) to act on the spectrum of D-branes. The study of D-branes on
Calabi-Yau manifolds inspired in fact the search of new Fourier-
Mukai partners \cite{Or97, Kaw02a, Ue04, HKTY}  among other mathematical problems.

The plan of the paper is as follows. In the second section, we summarize some general results in the theory of Fourier-Mukai partners.  In Section 3, we review some results on the Picard scheme of a projective curve. In Section 4, we compute the Grothendieck and negative $K$-groups of a curve. In Section 5, we recall the definition and main properties of the derived category of singularities. Section 6 collects some geometric properties of Fourier-Mukai partners and in Section 7 we prove Theorem \ref{t1} and Theorem \ref{t2} .
 
This article is partly based on a talk given by the first author at the congress ``VBAC2015: Fourier-Mukai, 34 years on" (University of Warwick, 15Ð19 June 2015).
The authors would like to thank the organizers for the invitation and to the University of Warwick for the hospitality.
\subsubsection*{Conventions}

In this paper, all schemes are assumed to be separated  and quasi-compact over an algebraically closed field $k$ of characteristic zero and unless otherwise stated a point means a closed point.  For any scheme $X$ we denote by $\catD(X)$ the
derived category of complexes of $\cO_X$-modules with
quasi-coherent cohomology sheaves. Analogously $\dpc{X}$, $\dmc{X}$
and $\dbc{X}$ denote the derived categories of complexes
which are respectively bounded below, bounded above and bounded on
both sides, and have quasi-coherent cohomology sheaves. The
subscript $c$ refers to the corresponding subcategories of
complexes with coherent cohomology sheaves. We denote by $\textbf{Pic}_{X}$ the Picard functor and if it is representable then $\Pic(X)$ denotes the representing scheme.
By a curve we will understand a connected curve (possibly reducible or non-reduced) contained in a smooth algebraic surface. We will use then the standard notation for curves lying on smooth algebraic surfaces. If $X$ is a curve over $k$, we will write $X=\sum_{i=1}^{N}m_i\Theta_i$ where $m_i$ are positive integers and $\Theta_i$ are integral curves. For $i=1,\hdots, N$, we will denote $C_i=m_i\Theta_i$ the irreducible components of $X$. The number $m_i$ is called the {\it multiplicity} of $C_i$. Let $m$ be the g.c.d. of the multiplicities $m_i$. If $m>1$, we will say that $X$ is a {\it multiple} curve.

\section{Fourier-Mukai Partners}
Two schemes $X$ and $Y$ are said to be {\it (Fourier-Mukai) partners} if there exists an exact equivalence of triangulated categories between their derived categories $\catD(X)$ and $\catD(Y)$ of quasi-coherent sheaves. For a scheme $X$, denote by $\text{FM}(X)$ the set of isomorphism classes of (Fourier-Mukai) partners of $X$, that is,
$$\text{FM}(X):=\{ Y \text{ schemes  such that } \catD(X)\simeq \catD(Y)\}/\simeq\, .$$ 
Let us obtain other descriptions of  the set $\text{FM}(X)$.

An object $\mathcal{E}$ in $\catD(X)$ is said to be {\it perfect} if it is locally isomorphic to a bounded complex of locally free sheaves of finite type. Denote by $\Perf(X)$ the category of perfect objects on $X$. Obviously, $\Perf(X) \subseteq \cdbc{X}$ and, thanks to Serre's theorem, they are equal if and only if the scheme $X$ is regular.

Perfect complexes on a scheme $X$ are described in purely categorical terms as follows.
An object $a$ in a triangulated category $\mathcal{T}$ is said to be {\it compact} when it commutes with direct sums, that is, if there is an isomorphism $\Hom(a,\oplus_i b_i)\simeq \oplus_i\Hom (a, b_i)$ for each family of objects $\{b_i\}$ in $\mathcal{T}$. Neeman proved  in \cite {Nee96}  that for any scheme $X$ perfect complexes on $X$ are precisely compact objects in $\catD(X)$, that is, $\Perf(X)=\catD(X)^c$. 

Thanks to this categorical characterization of perfect complexes and the existence of dg enhancements, one can prove the following derived Morita theorem (in the sense of Rickard). 
\begin{thm}[\cite{CanStella16},  Proposition 7.4 ] Let $X$ and $Y$ be two schemes with enough locally free sheaves. If $X$ is projective, the following conditions  are equivalent:
\begin{enumerate}
\item There is an exact equivalence $\catD(X)\simeq \catD(Y)$.
\item There is an exact  equivalence $\cdbc{X}\simeq \cdbc{Y}$.
\item  There is an exact equivalence $\Perf(X)\simeq \Perf(Y)$.
\end{enumerate}
\end{thm}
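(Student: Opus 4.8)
The plan is to prove the cycle of implications by combining Neeman's identification $\Perf(X)=\catD(X)^c$ with the existence and uniqueness of dg enhancements, organizing the whole argument around the big category $\catD(X)$ and a dg enhancement $\mathcal{P}_X$ of $\Perf(X)$. For the implication $(1)\Rightarrow(3)$ I would argue that any exact equivalence $\Phi\colon\catD(X)\simeq\catD(Y)$ is additive and, being an equivalence of categories, preserves every colimit that exists, in particular the arbitrary direct sums carried by both derived categories. Since compactness was defined solely through the commutation of $\Hom$ with direct sums, both $\Phi$ and its quasi-inverse send compact objects to compact objects, so $\Phi(\catD(X)^c)=\catD(Y)^c$. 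By Neeman's theorem this reads $\Phi(\Perf(X))=\Perf(Y)$, and $\Phi$ restricts to an exact equivalence $\Perf(X)\simeq\Perf(Y)$.

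The implication $(3)\Rightarrow(1)$ is where the dg enhancements do the real work. The category $\Perf(X)$ admits a dg enhancement $\mathcal{P}_X$ which, since $X$ is projective, is unique up to quasi-equivalence; hence an exact equivalence $\Perf(X)\simeq\Perf(Y)$ lifts to a quasi-equivalence $\mathcal{P}_X\simeq\mathcal{P}_Y$. Because $X$ is quasi-compact and quasi-separated, $\Perf(X)=\catD(X)^c$ compactly generates $\catD(X)$, so the realization functor identifies $\catD(X)$ with the derived category of dg modules over $\mathcal{P}_X$, and similarly for $Y$. Transporting along the quasi-equivalence then yields $\catD(X)\simeq\catD(Y)$.

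It remains to tie in $(2)$, which I would do through $(2)\Leftrightarrow(3)$. The inclusion $\Perf(X)\subseteq\cdbc{X}$ is intrinsic: an object $E\in\cdbc{X}$ is perfect precisely when it has finite tor-dimension, and this can be detected inside the triangulated category by requiring $\Hom(E,F[n])=0$ for $|n|\gg 0$ for every $F\in\cdbc{X}$ (testing against structure sheaves of points recovers finite tor-dimension, hence perfection). Thus any exact equivalence $\cdbc{X}\simeq\cdbc{Y}$ preserves this subcategory and restricts to $\Perf(X)\simeq\Perf(Y)$, proving $(2)\Rightarrow(3)$. For $(3)\Rightarrow(2)$ I would pass once more to the enhancement: under the identification of $\catD(X)$ with dg modules over $\mathcal{P}_X$, the subcategory $\cdbc{X}$ corresponds to the \emph{pseudo-perfect} (proper) modules, those $M$ for which $\Hom(a,M)$ has finite total cohomology for every $a$, where projectivity of $X$ is used to match this finiteness with boundedness and coherence of the cohomology sheaves. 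Pseudo-perfection being intrinsic to the dg category, the quasi-equivalence $\mathcal{P}_X\simeq\mathcal{P}_Y$ transports it and produces $\cdbc{X}\simeq\cdbc{Y}$.

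I expect the main obstacle to be exactly this last step, recovering the bounded coherent derived category from the perfect complexes: boundedness and coherence are $t$-structure notions rather than manifestly triangulated ones, so they must be re-encoded through the enhancement as pseudo-perfection, and one has to verify that this abstract finiteness condition reproduces $\cdbc{\cdot}$ on \emph{both} sides. Here the projectivity of $X$, through Serre finiteness and vanishing against an ample line bundle, is what makes the categorical characterizations valid, and some care is needed to ensure the partner $Y$ inherits enough finiteness for the identification of pseudo-perfect $\mathcal{P}_Y$-modules with $\cdbc{Y}$ to hold.
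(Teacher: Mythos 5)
Your outline follows exactly the route the paper indicates for this result (which it does not prove itself but cites from Canonaco--Stellari): Neeman's identification $\Perf(X)=\catD(X)^c$ gives $(1)\Rightarrow(3)$, uniqueness of dg enhancements plus compact generation gives $(3)\Rightarrow(1)$, and the intrinsic characterizations of $\Perf(X)$ inside $\cdbc{X}$ (homological finiteness) and of $\cdbc{X}$ among dg modules (pseudo-perfection) tie in $(2)$. This is essentially the argument of the cited reference, and you correctly locate the delicate point --- recovering $\cdbc{Y}$ as the pseudo-perfect modules on the partner side --- as the place where the projectivity/properness hypotheses are actually spent.
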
\qed

If $X$ and $Y$ are schemes for any object  $\cplx K\in \catD(X\times Y)$, we have an exact functor  $\Phi^{\cplx K}_{X\to Y}\colon \catD(X)\to \catD(Y)$ defined as
$$\Phi_{X\to Y}^{\cplx K}(\cplx{E})=\bR \pi_{Y_\ast}( \pi_X^\ast\cplx{E}\lotimes\cplx{K})\, ,$$
where $\pi_X\colon X\times Y\to X$ and $\pi_Y\colon X\times Y\to Y$ are the two projections. 
The complex $\cplx K$ is said to be the kernel of $\Phi^{\cplx K}_{X\to Y}$. 
Remember that an exact functor $F\colon \catD(X)\to \catD(Y)$ is an {\it integral functor} if there is an object $\cplx K\in \catD(X\times Y)$ and an isomorphism of exact functors $F\simeq \Phi^{\cplx K}_{X\to Y}$. When $\Phi^{\cplx K}_{X\to Y}$ is an equivalence it is called a {\it Fourier-Mukai}  functor.

Due to the famous representability theorem by Orlov \cite{Or97}, if $X$ and $Y$ are smooth projective schemes over $k$, then any exact fully faithful functor $F\colon \cdbc{X}\to\cdbc{Y}$ is an integral functor. A generalization of this result to smooth stacks is contained in \cite{Kaw02}. For a long time it was believed that any exact functor between the bounded derived categories of two smooth projective schemes had to be an integral functor. Nevertheless, a recent counterexample \cite{RiVdB16} by Rizzardo and Van den Bergh shows that this is not true. However, for (not necessarily smooth) projective schemes, we have strong results concerning the representability problem due to Lunts and Orlov. In  \cite{OL10}, we find the following important

\begin{thm}[\cite{OL10}, Theorem 9.9]  Let $X$ be a  projective scheme over $k$ such that the maximal torsion subsheaf  $T_0(\mathcal{O}_X)\subset \mathcal{O}_X$ of dimension 0 is trivial.
Then the triangulated categories $\text{Perf}(X)$ and $\cdbc{X}$ have strongly unique enhancements.
\end{thm}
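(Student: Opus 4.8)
The plan is to establish the two halves of the claim in turn: first that any two DG enhancements of $\Perf(X)$ (resp.\ of $\cdbc{X}$) are quasi-equivalent, and then the stronger statement that every exact equivalence on the triangulated level is induced by such a quasi-equivalence. Recall that an enhancement of a triangulated category $\mathcal{T}$ is a pretriangulated DG category $\mathcal{A}$ with an exact equivalence $H^0(\mathcal{A})\simeq\mathcal{T}$, and that the enhancement is \emph{strongly unique} when, given two enhancements $\mathcal{A}_1,\mathcal{A}_2$ and an exact equivalence $\phi\colon H^0(\mathcal{A}_1)\isom H^0(\mathcal{A}_2)$, there is a quasi-equivalence $\mathcal{A}_1\to\mathcal{A}_2$ inducing $\phi$. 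My first step would be to pass from these small categories to the big derived category $\catD(X)$: it is compactly generated with $\Perf(X)=\catD(X)^c$, and both $\Perf(X)$ and $\cdbc{X}$ sit inside it as full triangulated subcategories (the compact objects, resp.\ the cohomologically bounded objects with coherent cohomology). Passing to DG modules over an enhancement and completing under coproducts realizes $\catD(X)$ as an ind-completion of these subcategories, so that an enhancement of $\Perf(X)$ or of $\cdbc{X}$ determines, and is determined by, a compatible enhancement of $\catD(X)$. It therefore suffices to control the enhancement of the compactly generated category $\catD(X)$.

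Second, I would fix a compact generator $E$ of $\catD(X)$; for a projective scheme one may take $E=\bigoplus_{n=0}^{N}\cO_X(n)$ for a suitable ample sequence. Given an enhancement $\mathcal{A}$, lift $E$ to an object $\tilde E$ and form the DG endomorphism algebra $R=\End_{\mathcal{A}}(\tilde E)$; by Keller's theorem the enhancement is then equivalent to the DG category of perfect $R$-modules and $\catD(X)\simeq\catD(R)$. In this language, uniqueness of the enhancement becomes the assertion that the quasi-isomorphism type of $R$ depends only on the triangulated category together with the chosen generator. The cohomology algebra $H^{\bullet}(R)=\Ext^{\bullet}_X(E,E)$ is manifestly an invariant of $\catD(X)$; the genuine content is that the full DG (equivalently $A_\infty$) structure is determined as well.

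The heart of the argument, and the step I expect to be the main obstacle, is this rigidity of $R$. In general the graded Ext-algebra does \emph{not} determine the DG algebra, since formality may fail, so one cannot argue abstractly. Here I would exploit the geometric nature of the generator: because $E$ is built from an ample sequence, Serre vanishing gives $\Hom_{\catD(X)}(\cO_X(n),\cO_X(m)[i])=0$ for $i>0$ once $m-n$ is large, and this positivity is precisely what forces the higher products of $R$ to be reconstructible from the triangulated data. Applying the abstract uniqueness criterion of Lunts-Orlov for compactly generated categories possessing such a generator then pins down $R$ up to quasi-isomorphism, and hence the enhancement up to quasi-equivalence.

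Finally, for strong uniqueness one must upgrade "the enhancements are abstractly equivalent" to "the prescribed equivalence $\phi$ is realized." Transporting $E$ through $\phi$ identifies the two cohomology algebras $\Ext^{\bullet}_X(E,E)$, and the remaining task is to lift this identification to a quasi-isomorphism of the DG algebras $R_1,R_2$ compatibly with $\phi$. The obstruction to such a lift lives in a Hochschild-type cohomology group, and this is exactly where the hypothesis $T_0(\cO_X)=0$ enters: the vanishing of the $0$-dimensional torsion subsheaf of $\cO_X$ forces the relevant low-degree obstruction groups to vanish, so that the lift exists and $\phi$ is induced by a quasi-equivalence. I would therefore expect the two delicate points to be, first, the rigidity of $R$ secured by the ample generator, and second, the verification that $T_0(\cO_X)=0$ kills precisely the obstruction classes needed to promote uniqueness to strong uniqueness.
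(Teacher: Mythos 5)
The paper offers no proof of this statement: it is quoted verbatim from Lunts--Orlov \cite{OL10} and stated without argument, so the only thing your sketch can be measured against is the proof in the cited source. Against that, your outline reproduces the general shape of the strategy (a generator built from twists of $\cO_X$, its DG endomorphism algebra, Keller's theorem), but it has two genuine gaps. The first is the reduction of $\cdbc{X}$ to the big category $\catD(X)$: unlike $\Perf(X)=\catD(X)^c$, the subcategory $\cdbc{X}$ has no intrinsic characterization inside $\catD(X)$ when $X$ is singular, and strong uniqueness does not pass along ind-completion in any formal way. Lunts and Orlov handle $\cdbc{X}$ by a separate mechanism: a general strong-uniqueness theorem for triangulated categories carrying a bounded t-structure whose heart contains an \emph{ample sequence}, applied to the heart $\mathrm{Coh}(X)$ with the sequence $\{\cO_X(i)\}_{i\ll 0}$. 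Likewise, your assertion that Serre vanishing ``forces the higher products of $R$ to be reconstructible'' names the difficulty without resolving it; the actual proof never reconstructs the $A_\infty$-structure abstractly, but instead builds a quasi-functor between the two enhancements directly from the ample sequence and verifies on generators that it is a quasi-equivalence inducing the given $\phi$.

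The second, more serious, gap is the role you assign to the hypothesis $T_0(\cO_X)=0$. It is not there to kill Hochschild-type obstruction classes at the end of the argument. One of the axioms of an ample sequence in the Lunts--Orlov sense requires $\Hom_{\mathrm{Coh}(X)}(C,\cO_X(i))=0$ for every coherent sheaf $C$ and all $i\ll 0$. If $\cO_X$ had a nonzero zero-dimensional torsion subsheaf supported at a point $x$, then $\Hom(k(x),\cO_X(i))\neq 0$ for \emph{every} $i$, since twisting by a line bundle changes nothing at a point; the axiom fails and the machine cannot start. The hypothesis is therefore consumed at the very first step, in verifying that the twists of $\cO_X$ form an ample sequence in $\mathrm{Coh}(X)$, not in a deformation-theoretic lifting argument. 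As written, your sketch would not close without supplying the ample-sequence theorem and relocating the use of the hypothesis accordingly.
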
\qed

As a corollary they get the following generalization of Orlov's representabilty theorem in the projective context.

\begin{cor}[Representability] \label{representability} Let $X$ be a projective scheme such that $T_0(\mathcal{O}_X)=0$ and  let $Y$ be any noetherian scheme. Let $F\colon \cdbc{X}\to \cdbc{Y}$ be an exact  fully faithful functor with right adjoint. Then there is an object $\cplx K\in\cdbc{X\times Y}$ and an isomorphism of exact functors $F\simeq \Phi_{X\to Y}^{\cplx K}|_{\cdbc{X}}$.
\end{cor}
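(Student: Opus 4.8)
The plan is to deduce this statement from the strong uniqueness of enhancements of \cite{OL10} by lifting $F$ to the level of dg-enhancements and then invoking Toën's description of functors between dg-categories by kernels. Since $T_0(\mathcal{O}_X)=0$, the preceding theorem endows $\cdbc{X}$ (compatibly with $\Perf(X)$) with a strongly unique enhancement $\mathcal{A}_X$, and we fix a standard dg-enhancement $\mathcal{A}_Y$ of $\cdbc{Y}$. The argument then splits into three steps: (i) use the right adjoint to realize $F$ through a continuous functor on the large categories; (ii) lift it to a quasi-functor $\tilde F\colon\mathcal{A}_X\to\mathcal{A}_Y$ using strong uniqueness; and (iii) represent $\tilde F$ by an object of $\catD(X\times Y)$, checking afterwards that the kernel is bounded with coherent cohomology.

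First I would exploit the hypotheses on $F$. By Neeman's theorem \cite{Nee96}, $\Perf(X)=\catD(X)^c$ compactly generates $\catD(X)$, so the restriction of $F$ to $\Perf(X)$ extends by continuity to an exact functor $\catD(X)\to\catD(Y)$ commuting with arbitrary direct sums. The role of the right adjoint $G$ is precisely to make $F$ amenable to the enhancement machinery: it is the hypothesis that guarantees $F$ is, up to the enhancement, induced by a coproduct-preserving dg-functor whose associated bimodule is quasi-representable, so that it will come from an honest Fourier–Mukai kernel on the product rather than from a general dg-bimodule.

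Next I would lift $F$ to the dg level. Using that $\mathcal{A}_X$ is strongly unique, the graph of $F$ determines a second enhancement of $\cdbc{X}$ equipped with an exact functor to $\cdbc{Y}$, and strong uniqueness produces a quasi-functor $\tilde F\colon\mathcal{A}_X\to\mathcal{A}_Y$ with $H^0(\tilde F)\simeq F$. This is the heart of the matter and the step I expect to be the main obstacle: one must verify the precise compatibility hypotheses of \cite{OL10} so that the lift exists and represents $F$ on all of $\cdbc{X}$ (and not merely on $\Perf(X)$), and it is here, rather than in the formal steps, that the assumption $T_0(\mathcal{O}_X)=0$ is genuinely used.

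Finally I would invoke Toën's theorem on the homotopy category of dg-categories: continuous quasi-functors from the perfect dg-enhancement of $X$ to the dg-enhancement of $Y$ are classified, up to isomorphism, by objects of $\catD(X\times Y)$, the object $\cplx K$ corresponding to the Fourier–Mukai functor $\Phi^{\cplx K}_{X\to Y}$; this is the dg-enhanced form of the derived Morita statement recalled from \cite{CanStella16}. Applying it to $\tilde F$ yields a kernel $\cplx K\in\catD(X\times Y)$ with $F\simeq\Phi^{\cplx K}_{X\to Y}|_{\cdbc{X}}$. It then remains to check that $\cplx K$ lies in $\cdbc{X\times Y}$, which I would do by testing on skyscrapers: for a closed point $x\in X$, letting $i_x\colon\{x\}\times Y\hookrightarrow X\times Y$, one has $\bL i_x^\ast\cplx K\simeq\Phi^{\cplx K}_{X\to Y}(k(x))=F(k(x))\in\cdbc{Y}$, so $\cplx K$ has bounded coherent cohomology along every fibre of $\pi_X$. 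Together with the projectivity of $X$ and the properness of $\pi_X$ this forces $\cplx K\in\cdbc{X\times Y}$, completing the proof.
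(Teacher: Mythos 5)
The paper offers no proof of this statement: it is quoted from \cite{OL10} as a corollary of their Theorem 9.9 on strongly unique enhancements, so your task is really to reconstruct the Lunts--Orlov argument. Your overall strategy --- transport the standard enhancement of $\cdbc{Y}$ onto the essential image of $F$, use strong uniqueness of the enhancement of $\cdbc{X}$ to produce a quasi-functor lifting $F$, and then invoke To\"en's classification of continuous quasi-functors by kernels on $X\times Y$ --- is indeed the one used there. Two of your steps, however, do not work as written. First, the claim that $F|_{\Perf(X)}$ ``extends by continuity'' to an exact functor $\catD(X)\to\catD(Y)$ is circular at the triangulated level: such an extension only exists after the dg lift, so it is an output of steps (ii)--(iii), not an input to them. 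More seriously, To\"en's theorem produces the kernel from the restriction of the quasi-functor to the perfect subcategory, and since $X$ is singular one has $\Perf(X)\subsetneq\cdbc{X}$; the identification of $\Phi^{\cplx K}_{X\to Y}$ with $F$ on \emph{all} of $\cdbc{X}$ is precisely where the right adjoint is used, via an approximation of bounded coherent complexes by perfect ones, and your proposal only gestures at this (``verify the precise compatibility hypotheses'') without supplying the argument.

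Second, the final verification that $\cplx K\in\cdbc{X\times Y}$ by testing on skyscrapers is not valid. It is true that $\bL i_x^\ast\cplx K\simeq\Phi^{\cplx K}_{X\to Y}(k(x))\simeq F(k(x))\in\cdbc{Y}$ for every closed point $x$, but fibrewise bounded coherence does not force global bounded coherence for an object of $\catD(X\times Y)$ with quasi-coherent cohomology: for instance $\bigl(\bigoplus_{i\ge 0}k(x_i)[i]\bigr)\boxtimes\mathcal{O}_Y$, for infinitely many distinct smooth closed points $x_i\in X$, has bounded coherent derived restriction to every fibre of $\pi_X$ and yet is unbounded. What is missing is a \emph{uniform} bound on the cohomological amplitude of $F(k(x))$ as $x$ varies, and no such bound is available a priori for an abstract exact functor. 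The argument in \cite{OL10} avoids this by testing $\Phi^{\cplx K}_{X\to Y}$ instead on the finitely many line bundles $\mathcal{O}_X(-j)$, $0\le j\le n$, coming from a projective embedding $X\subseteq\mathbb{P}^n$, and recovering $\cplx K$ from these finitely many bounded coherent values by means of the Beilinson resolution of the diagonal of $\mathbb{P}^n$; you should replace your skyscraper test by this (or an equivalent) finiteness argument.
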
\qed

 As a consequence of the results contained in this section, 
 the set $\text{FM}(X)$  can be described as follows:
\begin{equation}\label{partners}\begin{aligned}&\text{FM}(X):=\{ Y \text{ such that } \catD(X)\simeq \catD(Y)\}/\simeq\\
&=\{ Y \text{  such that } \Perf(X)\simeq \Perf(Y)\}/\simeq\\
&=\{ Y \text{   such that } \cdbc{X}\simeq \cdbc{Y}\}/\simeq \\
&=\{ Y \text{   such that there is }  \Phi_{X\to Y}^{\cplx K}\colon \cdbc{X} \simeq \cdbc{Y} \}/\simeq
\end{aligned}
\end{equation}

\noindent where the last two equalities are true at least if $X$ is a projective scheme such that $T_0(\mathcal{O}_X)=0$, that is, a projective scheme without embedded points.

For non projective schemes it is not clear yet if every equivalence between the bounded derived categories of coherent sheaves is a Fourier-Mukai functor. See \cite{CanStella11} for a good survey on the subject. 

Replacing the smoothness condition by a Gorenstein condition, C. Sancho de Salas and F. Sancho de Salas generalize the classical Bondal-Orlov reconstruction theorem. They prove the following 

\begin{thm}[Theorem 1.15 in \cite{SS10}] Let $X$ be a connected equidimensional Gorenstein projective scheme over $k$ with ample canonical or antiample canonical sheaf. If $\Perf(X)$ (resp. $\cdbc{X})$ is equivalent as a graded category to $ \Perf(Y)$ (resp. $\cdbc{Y})$ for some other proper scheme $Y$, then $X$ is isomorphic to $Y$.
\end{thm}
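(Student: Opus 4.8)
The plan is to adapt the Bondal--Orlov reconstruction method to the Gorenstein setting, the essential novelty being that one is only handed a \emph{graded} (not necessarily exact) equivalence, so every construction must be phrased in terms of the graded $\Hom$-structure $\bigoplus_i \Hom(A, B[i])$ alone. The starting observation is that for a connected equidimensional Gorenstein projective scheme $X$ of dimension $n$ the dualizing complex is the shifted line bundle $\omega_X[n]$ (equidimensionality making the shift uniform across components, the Gorenstein condition making $\omega_X$ invertible), so Serre duality endows both $\Perf(X)$ and $\cdbc{X}$ with a Serre functor $S \simeq -\lotimes \omega_X[n]$. Since a Serre functor is characterized purely by the functorial duality $\Hom(A,B) \simeq \Hom(B, SA)^\ast$, which involves only $\Hom$-spaces and their vector-space duals, it is intrinsic to the graded category and is therefore carried to the Serre functor of $Y$ by any graded equivalence; in particular $Y$ will also possess a Serre functor of the same shape, which will eventually force $Y$ to be Gorenstein of dimension $n$.

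Next I would characterize the point objects intrinsically. Call $P$ a \emph{point object} if $S(P)\simeq P[n]$, $\Hom(P,P[i])=0$ for $i<0$, and $\Hom(P,P)=k$. The residue skyscrapers $k(x)$ at closed points satisfy these axioms because $\omega_X$ is invertible, so $S(k(x))\simeq k(x)[n]$. The reverse inclusion is where the ampleness hypothesis enters: the relation $S(P)\simeq P[n]$ reads $P\lotimes\omega_X\simeq P$, and when $\omega_X$ (or $\omega_X^{-1}$) is ample this is impossible unless the cohomology sheaves of $P$ are supported in dimension zero, since otherwise restricting to a curve in the support would change degrees. Once the support is zero-dimensional, the two remaining axioms single out $P$ as a simple skyscraper $k(x)$ up to shift. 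Thus the closed points of $X$ are recovered as isomorphism classes of point objects, and $n$ is recovered as the integer by which $S$ shifts them, all from the graded structure.

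I would then recover the invertible objects: an object $L$ is a \emph{line bundle up to shift} precisely when $\Hom(L, P[i])$ is one-dimensional and concentrated in a single degree for every point object $P$. After normalizing shifts these are exactly the genuine line bundles, among which I fix $\cO_X$. Applying the powers of the Serre functor to $\cO_X$ produces $\omega_X^{\pm i}$ up to shift, and, in the case $\omega_X$ ample, the graded ring
\[
R^\bullet \;=\; \bigoplus_{i\ge 0}\, \Hom\bigl(\cO_X,\, S^{i}\cO_X[-ni]\bigr)\,, \qquad n=\dim X,
\]
reconstructs the canonical ring $\bigoplus_{i\ge0}H^0(X,\omega_X^{i})$, since $S^{i}\cO_X[-ni]\simeq\omega_X^{i}$; symmetrically, when $\omega_X^{-1}$ is ample one uses the negative powers of $S$. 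In either case $X\simeq\operatorname{Proj}R^\bullet$. As $R^\bullet$ has been built entirely from the graded $\Hom$-spaces, the shift, and the Serre functor, the given graded equivalence transports it to the corresponding ring for $Y$, whence $Y\simeq\operatorname{Proj}R^\bullet\simeq X$.

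The main obstacle I anticipate is the point-object characterization in the singular Gorenstein case, specifically making the ampleness argument rule out positive-dimensional support without the smooth-case tools, and handling the $\Perf(X)$ statement: when $X$ is singular the skyscrapers $k(x)$ at singular points are not perfect, so they are unavailable as point objects inside $\Perf(X)$. One must therefore either argue that the section ring $R^\bullet$, and hence $X$, is recoverable from the ample line bundle alone without invoking a point object at every closed point, or pass between the two statements through the intrinsic description of $\Perf(X)$ inside $\cdbc{X}$ as the objects of finite homological dimension (those $E$ with $\Hom(E,F[i])=0$ for $|i|\gg0$ and all $F$), which is again graded-intrinsic. A final routine point is to confirm categorically that $Y$ inherits the Gorenstein, equidimensional and projective hypotheses, so that its own reconstruction via $\operatorname{Proj}$ is legitimate.
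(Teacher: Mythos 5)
Note first that the paper does not prove this statement: it is imported verbatim from \cite{SS10} (with \cite{Ballard11} giving an alternative proof via the triangulated structure), so the only meaningful comparison is with that source, which does follow the graded Bondal--Orlov strategy you outline. Your plan is the right one in broad outline, but it contains one false step and leaves the genuinely hard point open. The false step is the claim that Serre duality endows $\cdbc{X}$ with a Serre functor. For a singular Gorenstein $X$ the duality $\Hom(A,B)\simeq\Hom(B,A\otimes\omega_X[n])^{*}$ holds only when one of the two arguments is perfect. If $x$ is a singular point, then $\Ext^i(k(x),k(x))\neq 0$ for every $i\geq 0$ (the residue field of a non-regular local ring has infinite projective dimension with all Betti numbers positive), whereas a Serre functor $S$ on $\cdbc{X}$ would identify $\Ext^i(k(x),k(x))$ with a negative Ext into the bounded complex $S(k(x))$ and hence force vanishing for $i\gg 0$. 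So only $\Perf(X)$ carries a Serre functor; the $\cdbc{X}$ case must be reduced to the $\Perf(X)$ case at the outset, via the graded-intrinsic description of perfect complexes as the homologically finite objects, rather than invoked as an afterthought at the end.

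Second, the step you yourself flag as ``the main obstacle'' is exactly where the content of the theorem lies, and your proposal does not close it. In $\Perf(X)$ the skyscrapers at singular points are not available, so the closed points of $X$ are \emph{not} all recovered as point objects, and with them goes your characterization of invertible objects by testing against every point object: a priori an object of $\Perf(X)$ could behave like a shifted line bundle against skyscrapers at smooth points while failing to be invertible along the singular locus. Since the whole reconstruction funnels through exhibiting some invertible object $L$ (after which $\bigoplus_i\Hom(L,S^iL[-ni])\simeq\bigoplus_i H^0(X,\omega_X^i)$ and $X\simeq\operatorname{Proj}$ of this ring follow formally), the argument is incomplete until invertible objects are characterized graded-intrinsically in the presence of singularities; this is precisely the technical work of \cite{SS10}. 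Likewise, the assertion that $S(P)\simeq P[n]$ plus ampleness forces zero-dimensional support ``since restricting to a curve in the support would change degrees'' is a gesture at the smooth-case argument and must be redone for perfect complexes on a singular scheme. As it stands the proposal is a correct strategy with its two decisive lemmas missing.
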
\qed

The same result is proved by Ballard in \cite{Ballard11}, but in this case the proof uses the triangulated structure either of $\cdbc{X}$ or of $\text{Perf}(X)$.
 
\section{Picard schemes}
For a scheme $X$, let $\Pic(X)=H^1(X,\mathcal{O}_X^*)$ be the Picard group of X, that is, the group of isomorphism classes of line bundles on $X$.  Fixing a base scheme $S$ and an $S$-scheme $X$, the {\it relative Picard functor} $\textbf{Pic}_{X/S}$ is the sheaf associated to the functor that associates to every $S$-scheme $T$, the set $\Pic(X\times_S T)$ for the fppf topology.
If the base scheme $S$ is a field, Grothendieck proved that the Picard functor is representable in the projective case. Later Murre and Oort obtained the representability in the proper case. See \cite{Neron1990} as a reference for the following results:
\begin{thm}Let $X$ be a proper scheme over a field $k$. Then, the Picard functor $\textbf{Pic}_{X/k}$ is representable by a scheme $\Pic(X)$ which is locally of finite type over $k$. 
\end{thm}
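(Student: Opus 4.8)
The plan is to prove the projective case first, following Grothendieck's construction via the Hilbert scheme, and then to reduce the general proper case to it. To begin, I would observe that $\textbf{Pic}_{X/k}$ is by definition the fppf-sheafification of the functor $T\mapsto \Pic(X\times_k T)$, hence already a sheaf; the task is therefore to exhibit a scheme representing it. Assume first that $X\hookrightarrow\bbP^n$ is projective. The key geometric input is the functor $\operatorname{Div}_{X/k}$ of relative effective Cartier divisors, which is representable by an open subscheme of the Hilbert scheme $\operatorname{Hilb}_{X/k}$ (itself representable by Grothendieck's theorem). There is an Abel map $A\colon\operatorname{Div}_{X/k}\to\textbf{Pic}_{X/k}$ sending a divisor $D$ to the class of $\cO(D)$, and the strategy is to realise $\textbf{Pic}_{X/k}$ as a quotient of open pieces of $\operatorname{Div}_{X/k}$ along $A$.

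Next I would analyse the fibres of $A$ by means of cohomology and base change, decomposing everything according to the Hilbert polynomial $P$, so that $\textbf{Pic}_{X/k}=\coprod_P\textbf{Pic}^P_{X/k}$ with each $\textbf{Pic}^P_{X/k}$ open and closed. For a line bundle $L$ on a fibre that is sufficiently positive one has $H^1(L)=0$ and $h^0(L)$ constant, so by Grothendieck's cohomology-and-base-change theorem the pushforward $\pi_\ast\mathcal{L}$ of a universal bundle $\mathcal{L}$ is locally free of constant rank, and the effective divisors in the class of $L$ form the projective space $\bbP(H^0(L)^\vee)$. Thus, over the locus of such positive classes, $A$ is étale-locally a projective bundle, hence smooth, surjective, and faithfully flat onto that locus.

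With this structure in place I would invoke the theory of quotients: the positive part of $\textbf{Pic}^P_{X/k}$ is the quotient of an open subscheme of $\operatorname{Div}_{X/k}$ by the projective-bundle equivalence relation, and Grothendieck's descent results produce a quasi-projective scheme representing it. Tensoring by a fixed ample power $\cO_X(m)$ gives an isomorphism of functors $\textbf{Pic}^P_{X/k}\isom\textbf{Pic}^{P'}_{X/k}$ that carries every class into a positive one, so these representable quasi-projective pieces cover all of $\textbf{Pic}_{X/k}$; gluing over the countably many Hilbert polynomials yields a scheme $\Pic(X)$ locally of finite type over $k$. For general proper $X$, I would either apply Chow's lemma to obtain a projective $X'\to X$ and descend representability along the resulting presentation, or verify Artin's criteria directly: the functor is locally of finite presentation, satisfies the Rim--Schlessinger conditions, and carries a deformation-obstruction theory governed by $H^1(\cO_X)$ and $H^2(\cO_X)$, so that it is represented by an algebraic space, which one then upgrades to a scheme using that it is a separated group object with finite-type identity component.

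The main obstacle is the quotient and descent step: constructing the scheme quotient of the Hilbert piece by the projective-bundle relation and checking its universal property against the sheafified functor, including the subtlety that a universal line bundle need exist only fppf-locally (the Brauer obstruction to a Poincar\'e bundle). Handling this requires the full strength of cohomology and base change together with effective descent for the morphisms involved; in the non-projective case the same difficulty reappears, either as the problem of descending line bundles along Chow's lemma or as the passage from an algebraic space to an honest scheme.
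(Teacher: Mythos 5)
The paper does not actually prove this theorem: it is quoted from \cite{Neron1990} (Theorem 8.2/3 there), with the projective case attributed to Grothendieck and the general proper case to Murre and Oort. So your sketch cannot be compared to an argument in the paper; it is a reconstruction of the classical proof, and the only question is whether the outline is sound.

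In broad strokes it is, but one step is genuinely wrong as stated, and it fails exactly for the schemes this paper studies. You assert that for a sufficiently positive class $L$ the effective divisors in that class form the full projective space $\bbP(H^0(L)^\vee)$. A section $s\in H^0(L)$ defines an effective Cartier divisor only when $s$ is a \emph{regular} section (locally a nonzerodivisor), so the fibre of the Abel map is in general only an open subscheme of $\bbP(H^0(L)^\vee)$; for a reducible or non-reduced $X$ (every Kodaira curve other than types $I_0$, $I_1$, $II$) there are nonzero sections vanishing on an entire component, and the fibre is strictly smaller. This is why Grothendieck's FGA construction via $\operatorname{Div}_{X/k}$ and the Abel map is proved under the hypothesis of integral geometric fibres, and why the extension to arbitrary projective, let alone proper, schemes over a field required the separate work of Murre (pro-representability plus a representability criterion for group functors) and Oort (a d\'evissage from the proper to the projective case) --- not the Abel-map quotient. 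Your alternative route through Artin's criteria does close this gap: $\textbf{Pic}_{X/k}$ is an algebraic space for $X$ proper over $k$ (cohomological flatness in degree $0$ being automatic over a field), and a group algebraic space locally of finite type over a field is a scheme. But as written you offer that route only for the non-projective case, whereas you already need it (or Murre--Oort) for projective non-integral $X$; and note that separatedness of $\Pic(X)$ is neither automatic for proper $X$ nor needed for the space-to-scheme step. The Brauer obstruction you flag is real in general but disappears over $k=\bar k$, or whenever $X(k)\neq\emptyset$, where $\textbf{Pic}_{X/k}(T)=\Pic(X\times_k T)/\Pic(T)$ without sheafification.
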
\qed

Let us recall the structure of the Picard scheme $\Pic(X)$ when $X$ is a proper curve over a field $k$ . Denote by $X_{\text{red}}$  the reduced curve, that is, the largest reduced subscheme of $X$.  By functoriality, we get a canonical map 
$$\Pic(X)\to \Pic(X_{\text{red}})$$ whose kernel and cokernel are described by the following 
\begin{prop}\label{Pic} Let $X$ be a proper curve over a field $k$. Then, the canonical map 
$$\Pic(X)\to \Pic(X_{\text{red}})$$ is an epimorphism of sheaves for the \'etale topology. Its kernel is a smooth and connected unipotent group $U_X$  which is a successive extension of additive groups $\mathbb{G}_a$.
\end{prop}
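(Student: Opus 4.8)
The plan is to use that $X$ and $X_{\text{red}}$ have the same underlying space, so that $i\colon X_{\text{red}}\hookrightarrow X$ is a nilpotent thickening, and to study the Picard schemes through the nilradical filtration. Write $\cN\subset\cO_X$ for the nilradical; since $X$ is noetherian, $\cN^{n}=0$ for some $n$, and we obtain a chain of thickenings $X_{\text{red}}=X_0\subset X_1\subset\cdots\subset X_{n-1}=X$ with $X_j=V(\cN^{j+1})$. Each inclusion $X_{j-1}\hookrightarrow X_j$ is a square-zero thickening whose ideal $\cI_j=\cN^{j}/\cN^{j+1}$ is a coherent $\cO_{X_{\text{red}}}$-module, and the canonical map $\Pic(X)\to\Pic(X_{\text{red}})$ is the composite of the pullbacks $\Pic(X_j)\to\Pic(X_{j-1})$. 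So I would treat a single square-zero step and then assemble the conclusions.

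For a square-zero thickening $X_0\hookrightarrow X'$ with ideal $\cI$, the truncated exponential $a\mapsto 1+a$ yields a short exact sequence of sheaves of abelian groups on the common space $\lvert X'\rvert=\lvert X_0\rvert$,
$$0\to\cI\to\cO_{X'}^{*}\to\cO_{X_0}^{*}\to 0.$$
I would relativise this over an arbitrary $k$-scheme $T$: since $T\to\Spec k$ is flat, flat base change gives $H^i(X'_T,\cI_T)=H^i(X',\cI)\otimes_k\Gamma(T,\cO_T)$, and sheafifying the resulting long exact cohomology sequences for the \'etale topology produces an exact sequence of group functors. The decisive geometric input is $\dim X=1$: Grothendieck vanishing gives $H^2(X',\cI)=0$, so $\textbf{Pic}_{X'/k}\to\textbf{Pic}_{X_0/k}$ is an epimorphism of \'etale sheaves. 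Composing over the filtration yields the asserted surjectivity of $\Pic(X)\to\Pic(X_{\text{red}})$.

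The same long exact sequence identifies the kernel of $\Pic(X')\to\Pic(X_0)$ with the cokernel of the connecting map $\delta$ from the unit functor $T\mapsto H^0(X_{0,T},\cO^{*})$ to the vector group $T\mapsto H^1(X',\cI)\otimes_k\Gamma(T,\cO_T)\cong\mathbb{G}_a^{\,h}$, where $h=\dim_k H^1(X',\cI)$. Since $\Hom(\mathbb{G}_m,\mathbb{G}_a)=0$ over any field, the multiplicative part $\mathbb{G}_m$ of the units dies under $\delta$; hence $\delta$ factors through the unipotent part of the units, its image is a smooth connected unipotent subgroup of $\mathbb{G}_a^{h}$, and the kernel of the step is the quotient $\mathbb{G}_a^{h}/\im\delta$, a smooth, connected, commutative unipotent group. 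Assembling the $n$ steps shows that $U_X$ is smooth, connected, commutative and unipotent, and I would deduce its structure as a successive extension of copies of $\mathbb{G}_a$ from the structure theory of smooth connected commutative unipotent group schemes over the base field. The argument is insensitive to the characteristic of $k$: the load-bearing facts are $\dim X=1$ and $\Hom(\mathbb{G}_m,\mathbb{G}_a)=0$.

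The step I expect to be the main obstacle is the passage from ``smooth connected commutative unipotent'' to ``successive extension of $\mathbb{G}_a$'s'': one must control the image $\im\delta$ and verify that the resulting subquotients of the vector groups $H^1(X_{\text{red}},\cI_j)$ remain successive extensions of additive groups, which is exactly where the structure theory of commutative unipotent group schemes over $k$ enters. By contrast the surjectivity, i.e.\ the \'etale-epimorphism statement, is the soft part and follows purely from the curve condition $\dim X=1$; and flat base change over the field $k$ together with the triviality of $\Hom(\mathbb{G}_m,\mathbb{G}_a)$, which prevents any multiplicative or abelian contribution and forces genuine unipotence, are routine.
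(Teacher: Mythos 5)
The paper contains no proof of this proposition: it is quoted from \cite{Neron1990} (Section 9.2 there), and your argument is essentially a faithful reconstruction of the proof given in that reference --- d\'evissage along the nilradical filtration, the truncated-exponential sequence for a square-zero thickening (the same sequence the paper itself uses later, in \eqref{SucRed}, to prove Proposition \ref{Kcurvas}), flat base change over $k$ to relativise, Grothendieck vanishing $H^2(X',\cI)=0$ in dimension one for surjectivity (which your argument in fact gives already at the level of sections over affine $T$, hence a fortiori as \'etale sheaves), and the identification of each step kernel with $\mathbb{G}_a^{h}/\im\delta$. So in substance your proposal agrees with the proof the paper points to.

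Two refinements. First, the detour through $\Hom(\mathbb{G}_m,\mathbb{G}_a)=0$ is superfluous: every closed subgroup scheme of $\mathbb{G}_a^{h}$ is automatically unipotent, and the image of $\delta$ is smooth and connected simply because the unit group scheme of the finite-dimensional $k$-algebra $H^0(X_0,\cO_{X_0})$ is a nonempty open subscheme of an affine space (hence smooth and irreducible) and the image of a homomorphism of algebraic groups over a field is a quotient of its source. Second --- and this is the one genuine soft spot --- the step you yourself flag as the main obstacle cannot be settled over an arbitrary field by appealing to ``the structure theory of smooth connected commutative unipotent group schemes'': over an imperfect field there exist wound unipotent groups that admit no $\mathbb{G}_a$-filtration, so that structure theory yields the desired splitting only when $k$ is perfect, and your closing claim of insensitivity to the base field is not justified as stated. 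The correct route, which you already have in hand, is to use that each step kernel is a quotient of the vector group $\mathbb{G}_a^{h}$: quotients of split unipotent (indeed of $k$-split solvable) groups are again $k$-split (Borel, \emph{Linear Algebraic Groups}, Theorem 15.4(iii)), and extensions of split unipotent groups by split unipotent groups are split, so assembling the filtration gives $U_X$ as a successive extension of $\mathbb{G}_a$'s over any field. Under the paper's standing conventions ($k$ algebraically closed of characteristic zero) the issue is invisible, since there every smooth connected commutative unipotent group is a vector group; but the proposition as stated is over an arbitrary field, and there the quotient fact, not the general structure theory, is what closes the argument.
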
 \qed

It remains to give the structure of the Picard scheme of a reduced curve. For the curves we are interested in it is obtained from the following result that we can find in \cite{EGAIV2}, Prop. 21.8.5

\begin{prop}\label{p:grothendieck} Let $X$ and $X'$ be
projective and reduced curves over $k$. Let $\phi\colon X'\to
X$ be a finite and birational \footnote{By a birational morphism
$X'\to X$ of reducible curves we mean a morphism which is an
isomorphism outside a discrete set of points of $X$} morphism. Let
$U$ be the open subset of $X$ such that $\phi\colon
\phi^{-1}(U)\to U$ is an isomorphism and let $S=X-U$. Let us
denote $ \mathcal{O}'_X=\phi_*( \mathcal{O}_{X'})$. Then, there is
an exact sequence
$$0\to (\prod_{s\in S}\mathcal{O}'^{*}_{X,s}/\mathcal{O}_{X,s}^*)/\im H^0(X',\mathcal{O}^*_{X'})\to
\Pic(X)\xrightarrow{\phi^*}\Pic(X')\to 0\, .$$  If the canonical
morphism $H^0(X,\mathcal{O}_X)\to H^0(X',\mathcal{O}_{X'})$
is bijective, then the kernel of $\phi^*$ is isomorphic to
$\prod_{s\in S}\mathcal{O}'^{*}_{X,s}/\mathcal{O}_{X,s}^*$. 
\end{prop}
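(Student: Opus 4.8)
The plan is to deduce everything from the long exact cohomology sequence of a short exact sequence of Zariski sheaves of abelian groups on $X$, using $\Pic(-)=H^1(-,\mathcal{O}^*)$. Since $\phi$ is finite, hence affine, one has $\phi_*(\mathcal{O}_{X'}^*)=(\phi_*\mathcal{O}_{X'})^*=(\mathcal{O}'_X)^*$; and since $\phi$ is birational with $X$ reduced, the natural map $\mathcal{O}_X\to\mathcal{O}'_X$ is injective (a section vanishing on the dense open $U$ vanishes on the reduced curve $X$). Restricting to units gives an injection $\mathcal{O}_X^*\hookrightarrow(\mathcal{O}'_X)^*$ whose cokernel $\mathcal{Q}$ is supported on $S$, because $\phi$ is an isomorphism over $U$. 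On stalks $\mathcal{Q}_s=\mathcal{O}'^{*}_{X,s}/\mathcal{O}_{X,s}^*$, so $\mathcal{Q}$ is a skyscraper sheaf; hence $H^0(X,\mathcal{Q})=\prod_{s\in S}\mathcal{O}'^{*}_{X,s}/\mathcal{O}_{X,s}^*$, while $H^1(X,\mathcal{Q})=0$ since skyscraper sheaves are flasque.

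The cohomology sequence of $1\to\mathcal{O}_X^*\to(\mathcal{O}'_X)^*\to\mathcal{Q}\to 1$, together with $H^0(X,(\mathcal{O}'_X)^*)=H^0(X',\mathcal{O}_{X'}^*)$, then reads
\[
H^0(X,\mathcal{O}_X^*)\to H^0(X',\mathcal{O}_{X'}^*)\to\prod_{s\in S}\mathcal{O}'^{*}_{X,s}/\mathcal{O}_{X,s}^*\xrightarrow{\ \delta\ }\Pic(X)\xrightarrow{\ \phi^*\ }H^1(X,(\mathcal{O}'_X)^*)\to 0.
\]
The only term requiring work is $H^1(X,(\mathcal{O}'_X)^*)$, which I claim equals $\Pic(X')$. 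I would establish this through the Leray spectral sequence for $\phi$: its edge sequence identifies $H^1(X,\phi_*\mathcal{O}_{X'}^*)$ with $\ker\big(H^1(X',\mathcal{O}_{X'}^*)\to H^0(X,R^1\phi_*\mathcal{O}_{X'}^*)\big)$, and the crux is that $R^1\phi_*\mathcal{O}_{X'}^*=0$. Indeed, for $x\in X$ the stalk is $\varinjlim_{U\ni x}\Pic(\phi^{-1}(U))=\Pic\big(\phi^{-1}(\Spec\mathcal{O}_{X,x})\big)$, and because $\phi$ is finite this fibre is the spectrum of a semilocal ring, whose Picard group vanishes (finitely generated projectives over semilocal rings are free). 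Granting this vanishing, $H^1(X,(\mathcal{O}'_X)^*)\cong\Pic(X')$, and the same edge map identifies the displayed arrow with $\phi^*$. Thus $\phi^*$ is surjective and $\ker\phi^*=\im\delta\cong\Coker\big(H^0(X',\mathcal{O}_{X'}^*)\to\prod_{s\in S}\mathcal{O}'^{*}_{X,s}/\mathcal{O}_{X,s}^*\big)$, which is exactly the exact sequence in part (1).

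For part (2), the subgroup $\im H^0(X',\mathcal{O}^*_{X'})$ that is divided out is the image of the arrow $H^0(X',\mathcal{O}_{X'}^*)\to\prod_{s}\mathcal{O}'^{*}_{X,s}/\mathcal{O}_{X,s}^*$, and by exactness at the $H^0$ spot this arrow is zero precisely when $H^0(X,\mathcal{O}_X^*)\to H^0(X',\mathcal{O}_{X'}^*)$ is surjective. Now $\Gamma(X,\mathcal{O}_X^*)=H^0(X,\mathcal{O}_X)^*$ and likewise for $X'$, so a ring isomorphism $H^0(X,\mathcal{O}_X)\isom H^0(X',\mathcal{O}_{X'})$ induces an isomorphism, in particular a surjection, on unit groups; hence $\delta$ is injective and $\ker\phi^*\cong\prod_{s\in S}\mathcal{O}'^{*}_{X,s}/\mathcal{O}_{X,s}^*$. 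The main obstacle throughout is the vanishing $R^1\phi_*\mathcal{O}_{X'}^*=0$ and the attendant check that the Leray edge map genuinely is $\phi^*$; once these are in place, the rest is bookkeeping with the long exact sequence.
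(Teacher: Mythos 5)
The paper states this proposition without proof, citing EGA IV, 21.8.5, and your argument is correct and essentially reproduces the standard proof found there: the long exact cohomology sequence of $1\to\mathcal{O}_X^*\to(\mathcal{O}'_X)^*\to\mathcal{Q}\to 1$ (with $\mathcal{Q}$ a skyscraper on the finite set $S$), together with the vanishing $R^1\phi_*\mathcal{O}_{X'}^*=0$ for the finite, hence affine, morphism $\phi$, which reduces to the triviality of the Picard group of a semilocal ring. The only step worth spelling out a little more is the identification of the stalk $\varinjlim_{V\ni x}\Pic(\phi^{-1}V)$ with $\Pic$ of the semilocal ring $(\phi_*\mathcal{O}_{X'})_x$, i.e.\ the commutation of $\Pic$ with this filtered colimit; this is standard here because $\phi^{-1}(V)$ is affine for affine $V$, so everything reduces to a filtered colimit of rings.
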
\qed

\begin{cor}\label{c:grupoaditivo} Let $X$ and $X'$ be two
projective reduced and connected curves over $k$. Let
$\phi\colon X'\to X$ be a birational morphism which is an
isomorphism outside $P\in X$. If $\phi^{-1}(P)$ consists of just one point $Q\in
X'$ and $\mathfrak{m}_{X',Q}^2\subset \mathfrak{m}_{X,P}$, then the
sequence
$$0\to\mathbb{G}_a\to \Pic(X)\xrightarrow{\phi^*}\Pic(X')\to 0$$
is exact. 
\end{cor}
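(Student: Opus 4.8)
The plan is to apply Proposition~\ref{p:grothendieck} to the morphism $\phi\colon X'\to X$, so the main task reduces to identifying the kernel group $\prod_{s\in S}\mathcal{O}'^{*}_{X,s}/\mathcal{O}_{X,s}^*$ explicitly as $\mathbb{G}_a$ under the stated hypotheses. Since $\phi$ is an isomorphism outside the single point $P$, the set $S=X-U$ is just $\{P\}$, so the product collapses to a single factor $\mathcal{O}'^{*}_{X,P}/\mathcal{O}_{X,P}^*$, where $\mathcal{O}'_{X}=\phi_*(\mathcal{O}_{X'})$ and $\mathcal{O}'_{X,P}$ is the semilocal ring of $X'$ at the fiber over $P$.

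First I would check that the hypothesis $H^0(X,\mathcal{O}_X)\to H^0(X',\mathcal{O}_{X'})$ is bijective, so that the kernel is exactly this quotient of unit groups rather than a further quotient by $\im H^0(X',\mathcal{O}^*_{X'})$. Because both $X$ and $X'$ are connected, proper and reduced over the algebraically closed field $k$, we have $H^0(X,\mathcal{O}_X)=k=H^0(X',\mathcal{O}_{X'})$, and the canonical map is the identity on $k$; hence the bijectivity holds and Proposition~\ref{p:grothendieck} gives $\Ker\phi^* \simeq \mathcal{O}'^{*}_{X,P}/\mathcal{O}_{X,P}^*$. Here I am using that $\phi^{-1}(P)=\{Q\}$ is a single point, so $\mathcal{O}'_{X,P}$ is the local ring $\mathcal{O}_{X',Q}$, viewed as a module over $\mathcal{O}_{X,P}$ via $\phi^\sharp$.

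The key computation is to show $\mathcal{O}_{X',Q}^{*}/\mathcal{O}_{X,P}^{*}\simeq \mathbb{G}_a$ using the condition $\mathfrak{m}_{X',Q}^2\subset \mathfrak{m}_{X,P}$. The idea is that this containment forces the conductor to be tight: the inclusion $\mathcal{O}_{X,P}\hookrightarrow \mathcal{O}_{X',Q}$ is a local extension of one-dimensional local $k$-algebras, and $\mathfrak{m}_{X',Q}^2\subset \mathfrak{m}_{X,P}\subset \mathcal{O}_{X,P}$ means that modulo $\mathfrak{m}_{X',Q}^2$ the only new element contributed by $\mathcal{O}_{X',Q}$ is a single line. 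Concretely, a unit $u\in\mathcal{O}_{X',Q}^{*}$ can be normalized (multiplying by a scalar) to lie in $1+\mathfrak{m}_{X',Q}$, and then writing $u=1+a$ with $a\in\mathfrak{m}_{X',Q}$, the class of $u$ modulo $\mathcal{O}_{X,P}^{*}$ is controlled by the image of $a$ in $\mathfrak{m}_{X',Q}/(\mathfrak{m}_{X,P}\cap\mathfrak{m}_{X',Q})$. The hypothesis $\mathfrak{m}_{X',Q}^{2}\subset \mathfrak{m}_{X,P}$ collapses this to a one-dimensional $k$-vector space, and I would verify that the assignment $u=1+a\mapsto \bar a$ is a group homomorphism onto $\mathbb{G}_a=(k,+)$ whose kernel is precisely $\mathcal{O}_{X,P}^{*}$, since $(1+a)(1+b)=1+(a+b)+ab$ and $ab\in\mathfrak{m}_{X',Q}^2\subset\mathcal{O}_{X,P}$.

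The main obstacle I anticipate is the careful verification that the multiplicative quotient $\mathcal{O}_{X',Q}^{*}/\mathcal{O}_{X,P}^{*}$ is genuinely \emph{additive} and one-dimensional, rather than merely of dimension one as a scheme: one must ensure that the residue-field contribution to units (the factor $k^{*}$) cancels between numerator and denominator, so that only the pro-unipotent part $1+\mathfrak{m}$ survives, and that the condition $\mathfrak{m}_{X',Q}^2\subset\mathfrak{m}_{X,P}$ cuts the length of $\mathcal{O}_{X',Q}/\mathcal{O}_{X,P}$ down to exactly one. Once the isomorphism $\mathcal{O}_{X',Q}^{*}/\mathcal{O}_{X,P}^{*}\simeq\mathbb{G}_a$ is established, substituting it into the exact sequence of Proposition~\ref{p:grothendieck} yields the desired short exact sequence and completes the proof.
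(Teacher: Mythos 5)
Your overall strategy is the intended one: the paper offers no argument for this corollary (it is stated with a \qed, being a direct specialization of Proposition~\ref{p:grothendieck}, i.e.\ of EGA~IV, 21.8.5--21.8.7), and your reduction is correct as far as it goes. Since $\phi$ is proper, birational and quasi-finite, it is finite, so Proposition~\ref{p:grothendieck} applies with $S=\{P\}$; connectedness and reducedness give $H^0(X,\mathcal{O}_X)=k=H^0(X',\mathcal{O}_{X'})$, so the kernel of $\phi^*$ is $\mathcal{O}_{X',Q}^{*}/\mathcal{O}_{X,P}^{*}$; and the computation $(1+a)(1+b)=1+(a+b)+ab$ with $ab\in\mathfrak{m}_{X',Q}^{2}\subset\mathfrak{m}_{X,P}$ correctly shows that $u=c(1+a)\mapsto \bar a$ induces a group isomorphism from $\mathcal{O}_{X',Q}^{*}/\mathcal{O}_{X,P}^{*}$ onto the \emph{additive} group of the $k$-vector space $\mathfrak{m}_{X',Q}/\mathfrak{m}_{X,P}$ (the $k^{*}$-factors cancel because $k$ lifts the common residue field, and $\mathfrak{m}_{X,P}=\mathfrak{m}_{X',Q}\cap\mathcal{O}_{X,P}$ gives the kernel identification).

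The step you flag but do not carry out --- that $\mathfrak{m}_{X',Q}/\mathfrak{m}_{X,P}$ is one-dimensional --- is a genuine gap, and it cannot be closed from the hypotheses you invoke: $\mathfrak{m}_{X',Q}^{2}\subset\mathfrak{m}_{X,P}$ only says that $\mathfrak{m}_{X',Q}/\mathfrak{m}_{X,P}$ is a quotient of the Zariski cotangent space $\mathfrak{m}_{X',Q}/\mathfrak{m}_{X',Q}^{2}$, which may have dimension $\geq 2$ when $Q$ is a singular point of $X'$ (for instance, pinching a node $Q$ by setting $\mathcal{O}_{X,P}=k+\mathfrak{m}_{X',Q}^{2}$ satisfies every stated hypothesis and produces kernel $\mathbb{G}_a^{2}$). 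What your argument actually proves is the EGA form of the statement: the kernel is the vector group $\mathfrak{m}_{X',Q}/\mathfrak{m}_{X,P}$. To land on $\mathbb{G}_a$ you must add the input $\dim_k\bigl(\mathcal{O}_{X',Q}/\mathcal{O}_{X,P}\bigr)=1$; this is automatic when $Q$ is a smooth point of $X'$ (the cotangent space is then a line and $\phi$ is not an isomorphism at $P$), and it is easily checked directly in the situations where the paper uses the corollary (cusp versus $\mathbb{P}^1$, tacnode versus node). A smaller point: your computation identifies the $k$-points of the kernel, whereas the assertion concerns group schemes; either repeat the argument on $T$-points or note that in characteristic zero a one-dimensional smooth connected unipotent group is necessarily $\mathbb{G}_a$.
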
\qed

\begin{cor}\label{c:odaseshadri}
Let $X=\cup_{i\in I}C_i$ be a projective reduced and connected
curve over $k$. Suppose that the intersection points $\{
P_j\}_{j\in J}$ of its irreducible components are ordinary double
points. Let $X'=\sqcup_{i\in I}C_i$ be the partial normalization
of $X$ at the nodes $\{ P_j\}_{j\in J}$. Then, there is an exact
sequence $$0\to (\mathbb{G}_m)^s\to \Pic(X)\to \Pic(X')\to 0$$ where
$s=|J|-|I|+1$ is the first Betti number of the dual graph of $X$.
\end{cor}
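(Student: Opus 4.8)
The plan is to apply Proposition~\ref{p:grothendieck} to the partial normalization $\phi\colon X'=\sqcup_{i\in I}C_i\to X$ at the nodes $S=\{P_j\}_{j\in J}$ and then to compute the kernel of $\phi^*$ explicitly. Since $\phi$ is finite and birational (an isomorphism over $U=X\setminus S$), the proposition supplies the exact sequence
$$0\to \Big(\prod_{j\in J}\mathcal{O}'^{*}_{X,P_j}/\mathcal{O}_{X,P_j}^*\Big)\big/\im H^0(X',\mathcal{O}^*_{X'})\to \Pic(X)\xrightarrow{\phi^*}\Pic(X')\to 0,$$
with $\mathcal{O}'_X=\phi_*\mathcal{O}_{X'}$; in particular $\phi^*$ is already surjective. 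The refined statement of the proposition does \emph{not} apply here, because $X$ is connected while $X'$ has $|I|$ components, so $H^0(X,\mathcal{O}_X)=k\neq k^{|I|}=H^0(X',\mathcal{O}_{X'})$. It is exactly this discrepancy that will produce the Betti-number correction.

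First I would treat the local contribution at a node. As each $P_j$ is an ordinary double point coming from the meeting of two distinct components, $\phi^{-1}(P_j)=\{Q_1,Q_2\}$ consists of two points lying on different connected components of $X'$, so the stalk splits as $\mathcal{O}'_{X,P_j}=\mathcal{O}_{X',Q_1}\times\mathcal{O}_{X',Q_2}$, and $\mathcal{O}_{X,P_j}$ is the subring of pairs with equal value at the node. Since $k$ is algebraically closed the residue fields equal $k$, and the homomorphism $\mathcal{O}'^{*}_{X,P_j}\to\mathbb{G}_m$, $(u_1,u_2)\mapsto \overline{u_1}/\overline{u_2}$, is surjective with kernel precisely $\mathcal{O}^*_{X,P_j}$. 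Hence $\mathcal{O}'^{*}_{X,P_j}/\mathcal{O}_{X,P_j}^*\cong\mathbb{G}_m$, and the product over $J$ is $(\mathbb{G}_m)^{|J|}$.

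Next I would identify the subgroup that is quotiented out. Each $C_i$ is integral and projective, so $H^0(C_i,\mathcal{O}^*_{C_i})=k^*$ and thus $H^0(X',\mathcal{O}^*_{X'})=(\mathbb{G}_m)^{|I|}$. After fixing an ordering of the two branches at each node, the restriction map $(\mathbb{G}_m)^{|I|}\to(\mathbb{G}_m)^{|J|}$ sends $(\lambda_i)_i$ to $(\lambda_{a(j)}/\lambda_{b(j)})_j$, which is exactly the coboundary $C^0(\Gamma,\mathbb{G}_m)\to C^1(\Gamma,\mathbb{G}_m)$ of the dual graph $\Gamma$ of $X$ (vertices $=$ components, edges $=$ nodes). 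Therefore the kernel of $\phi^*$ is the cokernel of this coboundary, i.e.\ $H^1(\Gamma,\mathbb{G}_m)$. Because $\Gamma$ is connected the kernel of the coboundary is the diagonal $\mathbb{G}_m$, so its image is a subtorus of dimension $|I|-1$ and the cokernel has dimension $|J|-(|I|-1)=|J|-|I|+1=s$. Feeding this back into the sequence above yields $0\to(\mathbb{G}_m)^s\to\Pic(X)\to\Pic(X')\to 0$, as desired.

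The step needing the most care is this last identification at the level of group schemes rather than merely of groups of $k$-points: I must check that the quotient $H^1(\Gamma,\mathbb{G}_m)$ is $(\mathbb{G}_m)^s$ as a group scheme, not just a group of the right dimension. This follows because the coboundary is a homomorphism of tori (given by the incidence matrix of $\Gamma$), the image of a torus homomorphism is a subtorus, and the quotient of a torus by a subtorus is again a torus, which over the algebraically closed field $k$ is split; equivalently, the integral homology of a graph is free, so $H^1(\Gamma,\mathbb{Z})\cong\mathbb{Z}^s$ has no torsion and $H^1(\Gamma,\mathbb{G}_m)\cong(\mathbb{G}_m)^s$. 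Everything else is a direct substitution into Proposition~\ref{p:grothendieck}.
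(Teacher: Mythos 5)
Your proof is correct and is precisely the intended derivation: the paper states this corollary without proof as an immediate consequence of Proposition \ref{p:grothendieck}, and your computation of the local quotients $\mathcal{O}'^{*}_{X,P_j}/\mathcal{O}^{*}_{X,P_j}\cong\mathbb{G}_m$ together with the identification of $\im H^0(X',\mathcal{O}^*_{X'})$ with the image of the coboundary of the dual graph is the standard (Oda--Seshadri type) argument, including the scheme-theoretic splitness point at the end. The only detail worth flagging is that the dual graph must be read as a multigraph (two components may meet at several nodes, and the paper later even invokes the corollary for the self-node of a type $I_1$ curve, where $Q_1,Q_2$ lie on the \emph{same} component of $X'$ and the edge is a loop); your formula $(\lambda_{a(j)}/\lambda_{b(j)})_j$ and the Betti-number count go through unchanged in these cases, so this is a matter of phrasing rather than a gap.
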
\qed

\section{Grothendieck groups and negative $K$-Groups}
In this section, we collect some results concerning the Grothendieck and negative $K$-groups of a scheme $X$. Let us start by recalling some notions.

An {\it exact category} is a pair $(\mathcal{C},\mathcal{E})$ where $\mathcal{C}$ is an additive category with a full embedding $\mathcal{C}\subset \mathcal{A}$ in an abelian category $\mathcal{A}$  and $\mathcal{E}$ is a family of sequences in $\mathcal{C}$  of the form 
\begin{equation}\label{se} 0\to a\to b\to c\to 0
\end{equation} such that  
\begin{enumerate}\item $\mathcal{E}$ is a class of all sequences $\eqref{se}$ in $\mathcal{C}$ which are exact in $\mathcal{A}$.
\item $\mathcal{C}$ is closed under extensions in $\mathcal{A}$ in the sense that if $\eqref{se}$ is an exact sequence in $\mathcal{A}$ and $a,c\in \mathcal{C}$ then $b$  is isomorphic to an object in $\mathcal{C}$.
\end{enumerate}

The exact sequences in $\mathcal{E}$ are called {\it short exact sequences}. We will abuse notation and just say that $\mathcal{C}$ is an exact category when the class $\mathcal{E}$ is clear. 

{\it An exact functor} $F\colon \mathcal{C}\to \mathcal{D}$ between exact categories is an additive functor carrying short exact sequences in $\mathcal{C}$ to short exact sequences in $\mathcal{D}$. 

Let $\mathcal{C}$ be a small exact category. The {\it Grothendieck group} $K_0(\mathcal{C})$ of $\mathcal{C}$ is the abelian group freely generated by the classes $[a]$ of objects $a\in \mathcal{C}$ modulo the relation $[b]=[a]+[c]$ for every short exact sequence  $0\to a\to b\to c \to 0$ in $\mathcal{C}$. 

An exact functor $F\colon \mathcal{A}\to \mathcal{B}$ between exact categories induces a homomorphism of abelian groups $K_0(\mathcal{A})\to K_0(\mathcal{B})$ and equivalent exact categories have isomorphic Grothendieck groups. 

For instance, any abelian category has a natural structure of exact category and if $X$ is a scheme and $\VB(X)$ denotes the category of vector bundles on $X$, then $\VB(X)$ is also an exact category. We will denote  by $G_0(X):=K_0(\text{Coh}(X))$ the Grothendieck group associated to the category of coherent sheaves on $X$ and $K^0(X):=K_0(\VB(X))$ the Grothendieck group associated to the exact category $\text{VB}(X)$ of vector bundles on $X$. 

Let $\mathcal{T}$ be a triangulated category. The {\it Grothendieck group}  $K_0(\mathcal{T})$ of $\mathcal{T}$ is the abelian group freely generated by the classes $[a]$ of objects $a\in \mathcal{T}$ modulo the relation $[b]=[a]+[c]$ for every distinguished triangle $ a\to b\to c \to a[1]$ in $\mathcal{T}$.
Again exact functors (resp. equivalences) between triangulated categories induce group homomorphisms (resp. isomorphisms) between the corresponding Grothendieck groups.

Furthermore, in the particular case that $\mathcal{T}=\catD^b(\mathcal{C})$ is the derived category of an exact category $\mathcal{C}$, there is an isomorphism $K_0(\mathcal{T})\iso K_0(\mathcal{C})$ given by sending $[a]\in K_0(\mathcal{T})$ to $\sum_i(-1)^i[a^i]\in K_0(\mathcal{C}) $ if $a$ is the complex $\hdots \to a^{i-1}\to a^i\to a^{i+1}\to\hdots$. Thus, we have an  isomorphism $G_0(X)\simeq K_0(\cdbc{X})$.

If $X$ is a smooth scheme, every coherent $\mathcal{O}_X$-module has a finite resolution by vector bundles, so that  the Cartan morphism  $K^0(X)\iso G_0(X)$ is a group isomorphism, but this is not the case for singular schemes. However, in many interesting cases, the $K$-theory of vector bundles is equivalent to the $K$-theory  of perfect complexes. Thomason and Trobaugh proved that this is the case for instance for quasi-projective schemes.
\begin{thm}[Corollary 3.4 in \cite{ThomTrob90}] \label{vb} Let $X$ be a quasi-compact and separated scheme with an ample family of line bundles. Then the inclusion of bounded complexes of vector bundles  into $\Perf(X)$ induces an equivalence of triangulated categories $\catD^b(\VB(X))\simeq D(\Perf(X))$. In particular, one has a group isomorphism $$K^0(X)\simeq K_0(\Perf(X))\, .$$
\end{thm}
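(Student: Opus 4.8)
The plan is to establish the triangulated equivalence first and then read off the $K$-theory statement formally. The only geometric input beyond formal nonsense is the \emph{resolution property} coming from the ample family $\{\mathcal{L}_\alpha\}$. First I would record it in the following form: for every quasi-coherent $\cO_X$-module of finite type $\cF$ there is a surjection $\bigoplus_{j=1}^r\mathcal{L}_{\alpha_j}^{\otimes(-n_j)}\twoheadrightarrow\cF$ with $n_j\gg 0$. This is exactly where ampleness and quasi-compactness enter: for $n$ large the twist $\cF\otimes\mathcal{L}_\alpha^{\otimes n}$ is globally generated, quasi-compactness lets one pick finitely many generating sections, and dualizing yields the displayed surjection by a vector bundle. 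Iterating, any bounded-above pseudo-coherent complex admits a quasi-isomorphism from a bounded-above complex whose terms are finite direct sums of negative twists of the $\mathcal{L}_\alpha$, hence vector bundles.

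The heart of the argument, and the step I expect to be the main obstacle, is essential surjectivity of the inclusion $\catD^b(\VB(X))\to\Perf(X)$: every perfect complex $\cplx P$ must be shown quasi-isomorphic to a \emph{strictly} bounded complex of vector bundles. Recall that perfectness means, locally, pseudo-coherence together with bounded Tor-amplitude; quasi-compactness of $X$ then provides a single global interval $[a,b]$ bounding the Tor-amplitude of $\cplx P$. Using the resolution property I would build, by descending induction from degree $b$, a bounded-above complex of vector bundles $\cdots\to\cE^{-n}\to\cdots\to\cE^{b}\to 0$ equipped with a quasi-isomorphism $\cplx E\to\cplx P$, the terms being finitely presented by pseudo-coherence. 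The crucial point is the truncation: once the construction descends below $a$, the stabilised syzygy $\cK:=\Ker(\cE^{-n}\to\cE^{-n+1})$ is locally free. Indeed, the Tor-amplitude bound forces $\cK$ to be flat, while $\cK$ is finitely presented; and a finitely presented flat module over a scheme is finite locally free. Replacing the infinite tail by this single locally free sheaf placed in degree $-n$ produces a strict bounded complex of vector bundles quasi-isomorphic to $\cplx P$. Making the global Tor-amplitude bound honest and verifying local freeness of the stabilised syzygy uniformly over $X$ is the genuinely technical part.

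Full faithfulness I would treat as the more formal half. The exact embedding $\VB(X)\hookrightarrow\Coh(X)$ induces a functor $\catD^b(\VB(X))\to\catD(X)$ landing in $\Perf(X)$, and the content is that for bounded complexes of vector bundles $\cplx E,\cplx F$ the group $\Hom_{\catD(X)}(\cplx E,\cplx F)$ is already computed inside $\catD^b(\VB(X))$. A bounded complex of vector bundles is adapted for computing morphisms out of it into complexes with quasi-coherent cohomology, so no genuine localisation is lost; concretely, any quasi-isomorphism with target a bounded vector-bundle complex can be dominated, via the same resolution-and-truncation technique, by a bounded vector-bundle complex, so the calculus of fractions that defines $\Hom$ in $\catD(X)$ can be carried out within $\catD^b(\VB(X))$. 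Together with essential surjectivity this yields the equivalence $\catD^b(\VB(X))\simeq D(\Perf(X))$.

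Finally I would deduce the $K$-theory statement. An equivalence of triangulated categories induces an isomorphism of Grothendieck groups, so $K_0(\catD^b(\VB(X)))\simeq K_0(D(\Perf(X)))=K_0(\Perf(X))$. On the other hand, applying the isomorphism $K_0(\catD^b(\mathcal{C}))\iso K_0(\mathcal{C})$ for the derived category of an exact category $\mathcal{C}$ (recalled earlier) to $\mathcal{C}=\VB(X)$ gives $K_0(\catD^b(\VB(X)))\simeq K_0(\VB(X))=K^0(X)$. Composing the two isomorphisms yields $K^0(X)\simeq K_0(\Perf(X))$, as claimed.
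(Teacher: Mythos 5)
The paper offers no proof of this statement: it is quoted as Corollary 3.4 of Thomason--Trobaugh \cite{ThomTrob90} and closed without argument, so there is no internal proof to compare against. Your sketch correctly reconstructs the argument of the cited source --- the resolution property from the ample family, the global Tor-amplitude bound together with the ``finitely presented and flat implies locally free'' truncation of the syzygy for essential surjectivity of $\catD^b(\VB(X))\to \Perf(X)$, cofinality of quasi-isomorphisms from bounded vector-bundle complexes for full faithfulness, and the Euler-characteristic isomorphism $K_0(\catD^b(\VB(X)))\simeq K_0(\VB(X))$ for the $K$-theoretic consequence --- and I see no substantive gap beyond the technical bookkeeping you already flag.
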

\qed

Notice that if $X$ is a
scheme and $U \subset X$ an open subscheme, then the restriction of vector
bundles from $X$ to $U$ induces a map of Grothendieck groups $K^0(X)\to K^0(U)$
which in general is not surjective. For this reason,  negative
$K$-groups for schemes are needed. Taking into account the last result, Thomason and Trobaugh  defined {\it the negative $K$-groups} of a scheme $X$ as $K^i(X):=K^i(\Perf(X))$  where $K^i(\Perf(X))$ are defined (following Bass) by the exact sequences 
$$\begin{aligned}
&0\to K^i(\Perf(X))\to K^i(\Perf(X[t]))\oplus K^i(\Perf(X[t^{-1}]))\to\\
&  K^i(\Perf(X[t,t^{-1}]))\to K^ {i-1}(\Perf(X))\to 0\, .
\end{aligned}$$ 
Here  $X[t]$ and $X[t, t^{-1}]$ denote the product of $X$ with $\Spec(\mathbb{Z}[t])$ and $\Spec(\mathbb{Z}[t, t^{-1}])$. 
Negative $K$-groups vanish for smooth schemes, but this is not  true in general for singular schemes.

Let us compute $K^*(X)$ when $X$ is any projective curve. 
If $X$ is a reduced curve, Leslie Roberts proves that $K^{-1}(X)$ can be calculated using its {\it  bipartite graph}   $\Gamma_X$  which is  defined as follows. Let $S_X$ be  the set of singular points of $X$ and $\pi\colon \tilde{X} \to X$ the normalization. For $i=1, \hdots N$, denote by $C_i$  the irreducible components of $X$ and $\tilde{C_i}$ their normalization, so that $\tilde{X}=\sqcup_{i=1}^N \tilde{C_i}$. Then, $\Gamma_X$ has one vertex for each point in $S_X$ and one vertex for each irreducible component of $\tilde{X}$, that is $v_{\Gamma_X}=\{ S_X, \tilde{C_i}, i=1\hdots N\}$. The set of edges of $\Gamma_X$ is obtained as follows: for each point $Q\in \pi^{-1}(S_X)$, we draw an edge connecting the irreducible component $\tilde{C_i}$ of $\tilde{X}$ in which $Q$ lies to the singular point  $\pi(Q)\in S_X$. Let us denote $\lambda(X)$ the number of loops of $\Gamma_X$. Then, one has the following:

\begin{prop}\label{Kcurvas} Let $X$ be a  projective (not necessarily reduced) curve over an algebraically closed field $k$. Then, the following is true
\begin{enumerate} \item The group $K^0 (X)$ is generated by classes of line bundles and there is an isomorphism 
$K^0 (X)\simeq H^0(X,\mathbb{Z})\oplus \Pic(X)$.
\item $K^{-1}(X)\simeq K^{-1}(X_{\text{red}})\simeq \mathbb{Z}^{\lambda}$ where $\lambda=\lambda(X_{\text{red}})$.
\item $K^i(X)=0$ for $i\leq -2$.
\item $X$ is $K^{-1}$-regular, that is, $K^i(X)\simeq K^i(X[t_1,\hdots, t_r])$ for all $r>0$ and all $i\leq -1$.
\end{enumerate}  \end{prop}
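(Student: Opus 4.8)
The plan is to handle part (1) by elementary $K$-theory of rank and determinant, and to deduce parts (2)--(4) from the structural results on negative $K$-theory of schemes in characteristic zero, reducing each statement to the reduced curve $X_{\text{red}}$, where the cited theorem of Roberts applies. For part (1), Theorem \ref{vb} and projectivity give $K^0(X)\simeq K_0(\Perf(X))$, and I would work with the additive rank homomorphism $\rk\colon K^0(X)\to H^0(X,\mathbb{Z})$ and the determinant homomorphism $\det\colon K^0(X)\to\Pic(X)$ (additive on short exact sequences, the latter because $\det$ is multiplicative). Since $X$ is connected, $\rk$ takes values in $H^0(X,\mathbb{Z})=\mathbb{Z}$ and is split surjective by $1\mapsto[\cO_X]$, so $K^0(X)=\mathbb{Z}[\cO_X]\oplus\widetilde K^0(X)$ with $\widetilde K^0(X)=\ker\rk$. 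On $\widetilde K^0(X)$ the determinant is surjective via $[L]-[\cO_X]\mapsto L$, and its kernel is by definition $SK_0(X)$, which vanishes for any Noetherian scheme of Krull dimension $\le 1$ (Bass--Serre). Hence $\det\colon\widetilde K^0(X)\iso\Pic(X)$, every class in $\widetilde K^0(X)$ equals $[L]-[\cO_X]$ for a unique line bundle $L$, so $K^0(X)$ is generated by line-bundle classes and $K^0(X)\simeq H^0(X,\mathbb{Z})\oplus\Pic(X)$.

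For parts (2)--(4) the essential input is the resolution of Weibel's conjecture in characteristic zero by Corti\~nas, Haesemeyer, Schlichting and Weibel: for $X$ of finite type over a field of characteristic $0$ with $d=\dim X$, one has $K^{-n}(X)=0$ for $n>d$, a natural isomorphism $K^{-d}(X)\simeq H^d_{\mathrm{cdh}}(X,\mathbb{Z})$, and $X$ is $K^{-d}$-regular. Our curve is projective over the characteristic-zero field $k$ and has $d=1$, so the vanishing for $n>1$ is exactly part (3). The $K^{-1}$-regularity, combined with Vorst's theorem (whereby $K^{-1}$-regularity forces $K^{-n}$-regularity for every $n\ge 1$), yields $K^i(X)\simeq K^i(X[t_1,\dots,t_r])$ for all $i\le -1$ and all $r>0$, which is part (4). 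Finally $K^{-1}(X)\simeq H^1_{\mathrm{cdh}}(X,\mathbb{Z})$, and since $\mathrm{cdh}$-cohomology is insensitive to nilpotents this equals $H^1_{\mathrm{cdh}}(X_{\text{red}},\mathbb{Z})\simeq K^{-1}(X_{\text{red}})$; the theorem of Roberts recalled above then identifies the latter with $\mathbb{Z}^{\lambda}$, $\lambda=\lambda(X_{\text{red}})$, giving part (2).

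The genuine difficulty lies in (2)--(4): the vanishing, the regularity, and above all the reduction to $X_{\text{red}}$ in the non-reduced (and multiple) cases all rest on the characteristic-zero negative $K$-theory machinery, so the points that must be checked are that our curves meet its finite-type hypotheses and that $\mathrm{cdh}$ nil-invariance legitimately disposes of the nilpotent thickening $X_{\text{red}}\hookrightarrow X$ --- I know of no comparably clean elementary substitute that handles all multiplicities at once. Part (1), by contrast, is routine once one invokes $SK_0=0$ in dimension one (conceptually, the vanishing of the products $([L]-[\cO_X])([M]-[\cO_X])$ in the ring $K^0(X)$); the only mild bookkeeping is that the split rank sequence together with the determinant isomorphism assembles into the stated direct sum decomposition.
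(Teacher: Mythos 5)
Your proof of part (1) is essentially the paper's: the authors filter an arbitrary vector bundle on the curve by line bundles and let rank and determinant produce the decomposition, which is the same mechanism as your split rank sequence plus $\det$; your appeal to $SK_0(X)=0$ in dimension one makes explicit the injectivity (equivalently, the relation $[L_1]+[L_2]=[\cO_X]+[L_1\otimes L_2]$) that the paper leaves tacit. For parts (2)--(4), however, you take a genuinely different route. The paper stays elementary: for reduced $X$ it cites Roberts, and for the nilpotent thickening it filters $X_{\text{red}}=X_0\subset X_1\subset\dots\subset X$ by powers of the ideal sheaf $\mathcal{I}$, reduces to the square-zero case, and uses the unit-group sequence $1\to 1+\mathcal{I}\to \cO_X^*\to\cO_{X_{\text{red}}}^*\to 1$ together with part (1) applied to $X[t]$, $X[t^{-1}]$ and $X[t,t^{-1}]$: feeding $K^0=H^0(\cdot,\mathbb{Z})\oplus\Pic(\cdot)$ into Bass's defining exact sequences, the long cohomology sequence (whose relevant terms are cohomology of the coherent sheaf $\mathcal{I}$ on a curve) shows that the negative $K$-groups and the regularity statements for $X$ and $X_{\text{red}}$ coincide, and Roberts' computation finishes the job. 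You instead invoke the Corti\~nas--Haesemeyer--Schlichting--Weibel resolution of Weibel's conjecture in characteristic zero to get the vanishing below degree $-1$, the $K_{-1}$-regularity (plus Vorst's descent of regularity), and the identification $K^{-1}(X)\simeq H^1_{\mathrm{cdh}}(X,\mathbb{Z})$, and then use nil-invariance of cdh-cohomology (the abstract blow-up square for $X_{\text{red}}\to X$) to pass to the reduced curve before quoting Roberts. Both arguments are correct: the paper's d\'evissage is self-contained but terse and tied to dimension one, while your reduction is a clean black box that handles all multiplicities uniformly and generalizes to arbitrary finite-type schemes over characteristic-zero fields, at the cost of importing the full cdh-descent machinery; the finite-type and characteristic-zero hypotheses you flag are indeed the only things to check, and they hold under the paper's standing conventions.
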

\begin{proof}  Since $X$ is a curve for any vector bundle $E$ on $X$ there is a filtration $$ 0=E_s\subset E_{s-1}\subset \hdots \subset E_1\subset E_0=E$$ such that the quotients $E_r/E_{r+1}$ are line bundles. Then the rank and the determinant of a vector bundle define the group isomorphism in (1). Let us prove now the remanning statements. If $X$ is reduced, see \cite{Roberts76}. Otherwise denote by  $\mathcal{I}$  the ideal sheaf defining the closed subscheme $X_{\text{red}}\hookrightarrow X$. We can assume that $\mathcal{I}^2=0$. Otherwise, consider the following filtration $$X_0=X_\text{red}\subset X_1\subset \hdots \subset X_{n-2}\subset X$$ where $X_j$ is the subsheme of $X$ defined by $\mathcal{I}^j$. When $\mathcal{I}^2=0$, one has the following exact sequence of sheaves on $X$
\begin{equation}\label{SucRed}1\to 1+\mathcal{I}\to \mathcal{O}_X^*\to \mathcal{O}_{X_{\text{red}}}^*\to 1\, .
\end{equation}
Since $H^0(X,\mathbb{Z})=H^0(X_{\text{red}},\mathbb{Z})$ and  $K^0(X[t])\simeq H^0(X[t],\mathbb{Z})\oplus \Pic(X[t])$, one concludes by the long cohomology sequence associated to \eqref{SucRed}.
 \end{proof}
 
For a connected and projective curve $X$ whose irreducible
components are isomorphic to $m_i\mathbb{P}^1$ for some positive integer numbers $m_i$, we can also compute the Grothendiek group $G_0(X)$ of coherent sheaves.

\begin{prop} \label{p:grupoK} Let $X$ be a connected and projective curve. If every irreducible
component $C_i$ of $X$ is isomorphic to $m_i\mathbb{P}^1$ where $m_i\geq 1$ is an integer and $\mathbb{P}^1$ is the projective
line, then there is an isomorphism $G_0(X)\simeq
\mathbb{Z}^{N+1}$ where $N$ is the number of irreducible components
of $X$.

\end{prop}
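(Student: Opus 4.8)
The plan is to strip off the non-reduced structure, reduce to the reduced curve, and then detect a basis of $G_0$ by a short list of additive invariants. First I would pass to $X_{\text{red}}$. Since $X$ is Noetherian, the nilradical of $\cO_X$---which is exactly the ideal sheaf of the closed immersion $X_{\text{red}}\hra X$---is nilpotent, so Quillen's dévissage theorem provides an isomorphism $G_0(X_{\text{red}})\iso G_0(X)$ induced by push-forward. Passing to $X_{\text{red}}$ changes neither the number $N$ of irreducible components nor connectedness, and it replaces each component $m_i\bbP^1$ by a reduced rational curve $\Theta_i\cong\bbP^1$. Hence it suffices to prove $G_0(X_{\text{red}})\simeq\bbZ^{N+1}$ for a connected projective reduced curve $X_{\text{red}}=\bigcup_{i=1}^N\Theta_i$ all of whose components are rational.

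Next I would exhibit $N+1$ generators. By Noetherian induction on the support, $G_0$ of any Noetherian scheme is generated by the classes $[\cO_V]$ of its integral closed subschemes; for the curve $X_{\text{red}}$ these are the components $\Theta_i$ together with the closed points. Here rationality is crucial: on each $\Theta_i\cong\bbP^1$ one has $\Pic^0=0$, so any two of its points are linearly equivalent and have the same class in $G_0$; chaining through the intersection points and invoking connectedness, all skyscraper classes collapse to a single class $[\cO_p]$ independent of the point $p$. Thus $[\cO_{\Theta_1}],\dots,[\cO_{\Theta_N}],[\cO_p]$ generate $G_0(X_{\text{red}})$, yielding a surjection $\bbZ^{N+1}\twoheadrightarrow G_0(X_{\text{red}})$.

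To see that these generators are free, I would detect them with additive invariants. For each component, localization at the generic point $\eta_i$ is exact and $\cO_{X_{\text{red}},\eta_i}$ is a field, so $\rk_i\colon G_0(X_{\text{red}})\to\bbZ$, $[\cF]\mapsto\dim_{k(\eta_i)}\cF_{\eta_i}$, is a well-defined homomorphism; and since $X_{\text{red}}$ is projective, the Euler characteristic $\chi\colon[\cF]\mapsto\sum_j(-1)^j\dim_k H^j(X_{\text{red}},\cF)$ gives another. Evaluating $\Psi:=(\rk_1,\dots,\rk_N,\chi)$ on the generators, one finds that $[\cO_{\Theta_i}]$ maps to the vector with a $1$ in the $i$-th and the $(N+1)$-st coordinates (using $\chi(\bbP^1,\cO)=1$), while $[\cO_p]$ maps to the last basis vector. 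The resulting $(N+1)\times(N+1)$ integer matrix is block lower-triangular with an $N\times N$ identity block in the upper-left corner and a $1$ in the bottom-right corner, so its determinant is $1$. Hence $\Psi$ sends the generating set to a $\bbZ$-basis of $\bbZ^{N+1}$, which forces the surjection above to be an isomorphism, and $G_0(X)\simeq G_0(X_{\text{red}})\simeq\bbZ^{N+1}$.

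I expect the only genuinely delicate point to be controlling generators and relations in the generation step. The detecting morphism $\Psi$ disposes of the relations automatically: once a generating set maps to a $\bbZ$-basis, independence is immediate and one never has to analyze relations by hand. By contrast, the dévissage reduction, the collapse of point classes coming from rationality, and the computation $\chi(\cO_{\bbP^1})=1$ are all routine.
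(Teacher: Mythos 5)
Your proof is correct, but it takes a genuinely different route from the paper's. The paper does not reduce to $X_{\text{red}}$ by d\'evissage inside the proof (it records that isomorphism only afterwards, as a consistency check); instead it runs entirely through Fulton's intersection-theoretic machinery. It first computes $A_0(X)\simeq\bbZ$ and $A_1(X)\simeq\bbZ^N$, then obtains the generators $[\cO_{\Theta_1}],\dots,[\cO_{\Theta_N}],[\cO_x]$ by observing that the composition of $X_{\text{red}}\hra X$ with the normalization is a Chow envelope, so that pushforward from $\bigsqcup_i\widetilde\Theta_i\simeq\bigsqcup_i\bbP^1$ is surjective on $G_0$; finally it gets linear independence from the Riemann--Roch transformation $\tau_X\colon G_0(X)\to A_*(X)\otimes_\bbZ\bbQ$, which carries the proposed generators to a basis because $(\tau_X)_\bbQ$ is an isomorphism and $\tau_X([\cO_Y])=[Y]+(\text{lower-dimensional terms})$. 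You replace both halves with more elementary tools: d\'evissage plus the standard filtration by integral closed subschemes for generation (with the same collapsing of all point classes via rationality of the components and connectedness of $X$), and the explicit additive invariants $(\rk_1,\dots,\rk_N,\chi)$ assembled into a unimodular matrix for independence. The paper's route buys the computation of $A_*(X)$ along the way and a template that applies whenever one has a Chow envelope by rational curves; yours buys self-containedness, independence established integrally from a determinant rather than after tensoring with $\bbQ$, and no appeal to singular Riemann--Roch. Both arguments ultimately rest on the same two geometric inputs, namely that every component is rational and that $X$ is connected.
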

\begin{proof} 
The curve $X$ and the corresponding reduced curve $X_{\text{red}}$ have the same subvarieties, then  there is a canonical isomorphism $A_j(X)\simeq A_j(X_{\text{red}})$  for any $j\in \mathbb{Z}$ \cite[Example 1.3.1]{Ful}. Since  $X_{\text{red}}$ is connected and its irreducible components are
isomorphic to $\mathbb{P}^1$, any two points of $X_{\text{red}}$ are
rationally equivalent. Then $A_0(X)=\bbZ[x]$, where $[x]$ is the
class of a point of  $X_{\text{red}}$. On the other hand it is well known
\cite[Example 1.3.2]{Ful} that the $n$-th Chow group of an
$n$-dimensional scheme is the free abelian group on its
$n$-dimensional irreducible components, therefore
$A_1(X)=\bbZ[\Theta_1]\oplus\dots \oplus\bbZ[\Theta_N]$ where $\Theta_i$ are the irreducible components of $X_{\text{red}}$.

The natural closed immersion  $i\colon X_{\text{red}}\hookrightarrow X$ and the normalization $\pi\colon \widetilde{X_{\text{red}}} \to  X_{\text{red}}$ morphism are both Chow envelopes and, by  \cite[Lemma 18.3]{Ful}, so is the composition  $i\circ \pi\colon  \widetilde {X_{\text{red}}}\to X$.
Thus $\pi_*\colon G_0(\widetilde {X_{\text{red}}}) \to G_0(X)$
is surjective \cite[Lemma~18.3]{Ful}. Since $\widetilde {X_{\text{red}}}=\widetilde
\Theta_1\coprod\dots\coprod\widetilde \Theta_N$ one has
$$G_0(\widetilde {X_{\text{red}}})\simeq G_0(\widetilde
\Theta_1)\oplus\dots\oplus G_0(\widetilde \Theta_N).$$ Taking into
account that $\widetilde \Theta_i\simeq\bbP^1$ we get
$G_0(\widetilde {X_{\text{red}}})=\bigoplus_{i=1}^N\bbZ[\cO_{\widetilde
\Theta_i}]\oplus\bbZ[\cO_{\tilde x_i}]$, where $\tilde x_i\in\widetilde
\Theta_i$. Given any two points $\tilde x_i$, $\tilde y_i$ in $\widetilde
\Theta_i\simeq\bbP^1$ we know that $[\cO_{\tilde x_i}]=[\cO_{\tilde
y_i}]$. Therefore, since $X$ is connected, the surjectivity of
$\pi_*$ implies that $G_0(X)$ is generated by
$$[\cO_{\Theta_1}],\dots, [\cO_ {\Theta_N}], [\cO_{x}].$$
By the Riemann-Roch theorem for algebraic schemes \cite[Theorem
18.3]{Ful} there is a homomorphism $\tau_X\colon G_0(X)\to
A_\ast(X)\otimes_\bbZ\bbQ$ with the following properties:

\begin{enumerate}
\item  For any $k$-dimensional closed subvariety $Y$ of $X$, one has that 
$\tau_X([\cO_Y])=[Y]+ \mathrm{terms\ of\ dimension} < k$, 
\item $(\tau_X)_\bbQ\colon
G_0(X)\otimes_\bbZ\bbQ\xrightarrow{\sim}
A_\ast(X)\otimes_\bbZ\bbQ$ is an isomorphism.
\end{enumerate} 
By the first
property, 
$\{\tau_X([\cO_{\Theta_1}]),\dots,
\tau_X([\cO_{\Theta_N}]),\tau_X([\cO_{x}])\}$ is a basis of
$A_\ast(X)\otimes_\bbZ\bbQ$. Taking into account that
$\{[\cO_{\Theta_1}],\dots,[\cO_{\Theta_N}], [\cO_{x}]\}$ is a system of
generators of $G_0(X)$, the second property of $\tau_X$
implies that it is also a basis.
\end{proof}

Notice that this calculation agrees with the following fact proved using the  {\it D\'evissage} technique (a result that, of course, is not true at the level of derived categories).
\begin{prop}{(D\'evissage)} Let $X$ be any noetherian scheme, the closed immersion $i\colon X_{\text{red}}\hookrightarrow X$ induces a homotopy equivalence of $K$-theories 
$$i_\ast \colon G_0 (X_{\text{red}}) \simeq G_0(X)\, .$$
\end{prop}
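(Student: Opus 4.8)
The plan is to produce an explicit inverse to $i_*$ at the level of $G_0$; this is the Grothendieck-group shadow of Quillen's d\'evissage theorem, and since $X$ is noetherian it can be carried out by hand. Let $\cI\subset\cO_X$ be the coherent nilpotent ideal defining the closed immersion $i\colon X_{\text{red}}\hookrightarrow X$, and fix $n$ with $\cI^n=0$. Pushforward along $i$ identifies $\Coh(X_{\text{red}})$ with the Serre subcategory of $\Coh(X)$ consisting of those $\cO_X$-modules annihilated by $\cI$, so that a coherent sheaf lies in the image of $i_*$ exactly when $\cI$ kills it. The basic observation I would exploit is that every coherent $\cF$ on $X$ carries the finite $\cI$-adic filtration $\cF\supseteq\cI\cF\supseteq\dots\supseteq\cI^n\cF=0$ whose successive quotients $\mathrm{gr}^j\cF:=\cI^j\cF/\cI^{j+1}\cF$ are annihilated by $\cI$, hence belong to $\Coh(X_{\text{red}})$.

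First I would dispose of surjectivity: the filtration above gives $[\cF]=\sum_{j\ge 0} i_*[\mathrm{gr}^j\cF]$ in $G_0(X)$, so $i_*$ is onto. For injectivity I would define a candidate inverse $\theta\colon G_0(X)\to G_0(X_{\text{red}})$ on generators by $\theta[\cF]=\sum_{j\ge 0}[\mathrm{gr}^j\cF]$. Granting that $\theta$ is a well-defined homomorphism, the two composites are immediate: for $\cG\in\Coh(X_{\text{red}})$ the filtration of $i_*\cG$ is trivial, so $\theta\circ i_*=\mathrm{id}$, while the surjectivity computation is precisely $i_*\circ\theta=\mathrm{id}$. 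Hence $i_*$ is an isomorphism.

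The hard part is therefore showing that $\theta$ respects short exact sequences, since $\cI^j\cdot(-)$ is not exact. Given $0\to\cF'\to\cF\to\cF''\to 0$ I would equip $\cF'$ with the induced (subobject) filtration $F'_j=\cF'\cap\cI^j\cF$ and $\cF''$ with the quotient filtration, which here coincides with its own $\cI$-adic filtration $\cI^j\cF''$. One checks $\cI\cdot F'_j\subseteq F'_{j+1}$, so all graded pieces remain $\cI$-annihilated, and the standard fact that an exact sequence of filtered objects with sub/quotient filtrations yields exact sequences on associated gradeds produces, for each $j$, a short exact sequence $0\to F'_j/F'_{j+1}\to\mathrm{gr}^j\cF\to\mathrm{gr}^j\cF''\to 0$ in $\Coh(X_{\text{red}})$. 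Summing over $j$ then reduces the additivity of $\theta$ to the single remaining point: the induced filtration $\{F'_j\}$ and the intrinsic $\cI$-adic filtration $\{\cI^j\cF'\}$ of $\cF'$ must compute the same class in $G_0(X_{\text{red}})$. This I would settle by a filtration-refinement argument of Zassenhaus type: any two finite filtrations with $\cI$-annihilated quotients admit a common refinement, and refining changes none of the classes, since each inserted step merely splits a graded piece into a sub and a quotient, both still killed by $\cI$. This comparison is the genuinely delicate step; everything else is formal. For the stronger assertion about the $K$-theory spectra one simply invokes Quillen's d\'evissage theorem, whose hypotheses are exactly the existence of the finite $\cI$-adic filtration recorded above.
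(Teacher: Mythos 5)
Your argument is correct. The paper offers no proof of this proposition beyond a pointer to Theorem 6.3 of Chapter II of Weibel's $K$-book, and what you have written is essentially the standard argument behind that citation: surjectivity of $i_*$ from the finite $\cI$-adic filtration, the candidate inverse $\theta[\cF]=\sum_j[\cI^j\cF/\cI^{j+1}\cF]$, and the reduction of its additivity to comparing the induced filtration $\cF'\cap\cI^j\cF$ with the intrinsic $\cI$-adic one, which your Schreier--Zassenhaus refinement argument settles (each refined graded piece is a subquotient of an $\cI$-annihilated sheaf, hence still lies in $\Coh(X_{\text{red}})$, and each insertion merely splits one class into two). You correctly isolate that comparison as the only delicate point; the identification of $\Coh(X_{\text{red}})$ with the full subcategory of $\cI$-annihilated coherent sheaves, the exactness of the graded sequences via the snake lemma, and the verification $\cI\cdot(\cF'\cap\cI^j\cF)\subseteq\cF'\cap\cI^{j+1}\cF$ are all routine and correctly handled. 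One remark: as stated, the proposition asserts a homotopy equivalence of $K$-theories rather than merely an isomorphism of Grothendieck groups; your explicit construction only produces the latter, and your closing appeal to Quillen's d\'evissage theorem, whose hypotheses are exactly the existence of the finite $\cI$-adic filtration you exhibit, is the right way to obtain the spectrum-level statement. Nothing is missing.
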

\begin{proof}
See, for instance, Theorem 6.3 Chapter II in \cite{Kbook}. 
\end{proof}

\section{The derived category of singularities} 
In this section, we recall the definition and main properties of the derived category of singularities.
This theory developed by Orlov is related to the theory of stable categories of Cohen-Macaulay modules introduced years before by Buchweitz and to
the theory of matrix factorizations given by Eisenbud.
 
We say that a scheme $X$ satisfies the (ELF) condition, if it is separated, noetherian of finite Krull dimension and with enough locally free sheaves. This is satisfied for instance, for any quasi-projective scheme. For a scheme $X$ satisfying the (ELF) condition, Orlov introduced in \cite{Or03} a new invariant, called {\it the category of singularities} of $X$, which is defined as the Verdier quotient triangulated category $$D_\text{sg}(X):=\cdbc{X}/\text{Perf}(X)\, .$$ This is a triangulated category  that reflects the properties of the singularities of $X$. Let us see some properties of it. Denote by $q\colon \cdbc{X}\to D_\text{sg}(X)$ the natural localization functor.

If $\mathcal{D}$ is a triangulated category and $\mathcal{S}$ is a collection of objects in $\mathcal{D}$, we denote $\langle S\rangle_\mathcal{D}$ the smallest thick subcategory of $\mathcal{D}$ containing $\mathcal{S}$. Unifying two results of Orlov, Chen proves in \cite{Chen10} the following result.

\begin{thm}\label{th:chen} Let $X$ be a (ELF) scheme over $k$ and $j\colon U\hookrightarrow X$ an open immersion. Denote by $Z=X$ \textbackslash  $U$ the complement of $U$ and by $Coh_Z(X)\subseteq Coh(X)$ the subcategory of coherent sheaves on $X$ supported in $Z$. Then the induced functor $\bar{j^\ast}\colon  D_\text{sg}(X)\to  D_\text{sg}(U)$ induces a triangle equivalence $$ D_\text{sg}(X)/\langle q(Coh_ZX)\rangle_{D_\text{sg}(X)}\simeq  D_\text{sg}(U)\, .$$\qed
\end{thm}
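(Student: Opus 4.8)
The plan is to reduce the assertion to a statement about bounded derived categories and to settle it by two applications of the iterated Verdier quotient isomorphism, feeding in the classical localization theorem for an open immersion. Write $A=\cdbc{X}$ and let $A_Z\subseteq A$ be the full triangulated subcategory of complexes whose cohomology is supported on $Z=X\setminus U$; truncation triangles show that $A_Z=\langle\text{Coh}_Z(X)\rangle_A$. The restriction $j^\ast\colon A\to\cdbc{U}$ is exact, sends $\text{Perf}(X)$ into $\text{Perf}(U)$ because perfectness is local, and annihilates $A_Z$ because those complexes vanish on $U$; hence it descends to $\bar{j^\ast}\colon D_\text{sg}(X)\to D_\text{sg}(U)$ and kills $\langle q(\text{Coh}_Z(X))\rangle_{D_\text{sg}(X)}$.

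First I would set $C=\langle\text{Perf}(X),A_Z\rangle_A$, the thick subcategory of $A$ generated by the perfect and the $Z$-supported complexes, so that $\text{Perf}(X)\subseteq C$ and $A_Z\subseteq C$. Since $\text{Perf}(X)=\ker q$, the image $q(C)=C/\text{Perf}(X)$ is a thick subcategory of $D_\text{sg}(X)$, and because $q$ kills $\text{Perf}(X)$ it is generated by $q(A_Z)$, that is, $C/\text{Perf}(X)=\langle q(\text{Coh}_Z(X))\rangle_{D_\text{sg}(X)}$. The standard isomorphism for the nested thick subcategories $\text{Perf}(X)\subseteq C\subseteq A$ then gives
$$D_\text{sg}(X)/\langle q(\text{Coh}_Z(X))\rangle_{D_\text{sg}(X)}=(A/\text{Perf}(X))\big/(C/\text{Perf}(X))\simeq A/C,$$
so it remains to identify $A/C$ with $D_\text{sg}(U)$ compatibly with $\bar{j^\ast}$.

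Next I would invoke the localization theorem for the open immersion: restriction induces a triangle equivalence
$$A/A_Z=\cdbc{X}/A_Z\iso\cdbc{U}.$$
Essential surjectivity is the easy half (every coherent sheaf on $U$ extends over the noetherian scheme $X$, so every object of $\cdbc{U}$ lifts), while the fully faithful half is what exhibits $\cdbc{U}$ as the quotient by the $Z$-supported complexes. Under this equivalence the image of $C$ is the thick subcategory generated by $j^\ast\text{Perf}(X)$; by the density part of the Thomason--Trobaugh localization theorem \cite{ThomTrob90} these objects generate all of $\text{Perf}(U)$, so the image of $C$ is exactly $\text{Perf}(U)$. A second application of the iterated quotient isomorphism, now to $A_Z\subseteq C\subseteq A$, yields
$$A/C\simeq(A/A_Z)\big/(C/A_Z)\simeq\cdbc{U}/\text{Perf}(U)=D_\text{sg}(U),$$
and by construction this equivalence is induced by $q_U\circ j^\ast$ (with $q_U$ the localization functor of $U$), hence by $\bar{j^\ast}$. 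Combining with the previous display proves the theorem.

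The whole argument is formal once the localization equivalence $\cdbc{X}/A_Z\simeq\cdbc{U}$ is available, and establishing that equivalence is the main obstacle. Essential surjectivity is routine, but the full faithfulness --- that every morphism $j^\ast E\to j^\ast F$ is the image of a roof $E\xleftarrow{s}E'\xrightarrow{f}F$ with $\mathrm{cone}(s)\in A_Z$, and that two such roofs agreeing on $U$ are already equivalent in $A/A_Z$ --- is the delicate point, upgrading the Serre-quotient description $\text{Coh}(U)\simeq\text{Coh}(X)/\text{Coh}_Z(X)$ to the bounded derived level. One must also watch idempotent-completeness: the Thomason--Trobaugh comparison for perfect complexes is only an equivalence up to direct summands, but since we work throughout with thick (hence summand-closed) subcategories this causes no harm --- it is, however, precisely the kind of subtlety that resurfaces in the idempotent-completeness questions motivating this paper.
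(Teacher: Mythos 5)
The paper gives no proof of this theorem at all: it is quoted from Chen \cite{Chen10} and closed with a tombstone, so the only thing to compare against is Chen's original argument, which your proof essentially reconstructs. Your argument is correct and follows the standard route: two applications of the Verdier third-isomorphism theorem to the chains $\Perf(X)\subseteq C\subseteq\cdbc{X}$ and $A_Z\subseteq C\subseteq\cdbc{X}$ with $C=\langle\Perf(X),A_Z\rangle$, fed by the two external inputs you correctly flag as the real content, namely the localization equivalence $\cdbc{X}/A_Z\simeq\cdbc{U}$ (which does hold on the nose for noetherian schemes, with no idempotent completion needed, in contrast to the perfect-complex analogue) and the Thomason--Trobaugh density of $j^{*}\Perf(X)$ in $\Perf(U)$, which together with thickness identifies the image of $C$ with $\Perf(U)$.
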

\
He easily deduces the following two corollaries.

\begin{cor} If we denote $S_X$ the singular locus of $X$, then
\begin{enumerate} \item  $\bar{j^\ast}\colon  D_\text{sg}(X)\to  D_\text{sg}(U)$  is an equivalence if and only if  $S_X\subseteq U$.
\item $D_\text{sg}(X)=\langle q(Coh_ZX)\rangle_{D_\text{sg}(X)}$ if and only if  $S_X\subseteq Z$.
\end{enumerate} \qed
\end{cor}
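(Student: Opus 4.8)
The plan is to deduce both statements from Chen's equivalence (Theorem \ref{th:chen}) together with Serre's theorem, the only substantive input being the translation of a vanishing condition --- on a thick subcategory in part (1), on a Verdier quotient in part (2) --- into the geometric condition on the singular locus $S_X$.

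For (1), the key observation is that $\bar{j^\ast}$ factors as the composite of the localization functor $q'\colon D_\text{sg}(X)\to D_\text{sg}(X)/\langle q(Coh_Z X)\rangle_{D_\text{sg}(X)}$ with the equivalence supplied by Theorem \ref{th:chen}. Hence $\bar{j^\ast}$ is an equivalence if and only if $q'$ is, and a Verdier localization functor is an equivalence exactly when the thick subcategory it kills contains only zero objects. Since $\langle q(Coh_Z X)\rangle_{D_\text{sg}(X)}$ is generated by the objects $q(\mathcal{F})$ with $\mathcal{F}\in Coh_Z(X)$, it vanishes if and only if $q(\mathcal{F})=0$ --- equivalently $\mathcal{F}$ is perfect --- for every coherent sheaf $\mathcal{F}$ supported on $Z$. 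It then remains to see that this perfectness holds for all such $\mathcal{F}$ precisely when $Z\subseteq X\setminus S_X$, i.e. $S_X\subseteq U$. If $S_X\subseteq U$, then $\supp\mathcal{F}\subseteq Z$ lies in the regular locus; perfectness is local, and by Serre's theorem a finitely generated module over a regular local ring has a finite free resolution, so $\mathcal{F}$ is perfect on $X\setminus S_X$ and trivially perfect off its support, hence perfect. Conversely, if some singular point $x$ lies in $Z$, the skyscraper sheaf $k(x)$ lies in $Coh_Z(X)$ but has stalk of infinite projective dimension over the non-regular ring $\mathcal{O}_{X,x}$ (Serre's criterion), so $k(x)$ is not perfect and $q(k(x))\neq 0$.

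For (2), I would invoke Theorem \ref{th:chen} once more: the equality $D_\text{sg}(X)=\langle q(Coh_Z X)\rangle_{D_\text{sg}(X)}$ holds exactly when the quotient $D_\text{sg}(X)/\langle q(Coh_Z X)\rangle_{D_\text{sg}(X)}\simeq D_\text{sg}(U)$ is the zero category. Now $D_\text{sg}(U)=\cdbc{U}/\Perf(U)$ vanishes if and only if $\Perf(U)=\cdbc{U}$, which by Serre's theorem is equivalent to $U$ being regular, i.e. to $S_U=S_X\cap U=\emptyset$, i.e. to $S_X\subseteq Z$.

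The categorical bookkeeping (localization functor is an equivalence iff the killed subcategory is trivial; quotient is zero iff its image under Chen's equivalence is zero) is routine once Theorem \ref{th:chen} is granted. The main point requiring care is the identification in (1) of the vanishing of $\langle q(Coh_Z X)\rangle_{D_\text{sg}(X)}$ with the containment $Z\subseteq X\setminus S_X$, which rests entirely on Serre's theorem relating regularity to finite global dimension and to the perfectness of the residue field. One should check perfectness stalkwise and verify that a coherent sheaf whose support avoids the singular locus is genuinely perfect on all of $X$ --- covering $X$ by the regular locus and the complement of the support --- rather than merely perfect on an open subset.
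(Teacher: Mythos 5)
Your argument is correct and is exactly the deduction from Theorem \ref{th:chen} that the paper leaves unwritten (the corollary is quoted from Chen with only the remark that it is ``easily deduced''): identify $\bar{j^\ast}$ with the localization at $\langle q(Coh_ZX)\rangle_{D_\text{sg}(X)}$, and translate triviality of that subcategory, respectively of the quotient, into regularity statements via Serre's criterion. The stalkwise check that a coherent sheaf supported in the regular locus is perfect, and the counterexample $k(x)$ at a singular point of $Z$, are the right details to supply, so nothing is missing.
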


Let us denote by $k(x)$ the skyscraper sheaf of a closed point $x\in X$, since every coherent sheaf supported in $\{x_1,\hdots,x_n\}$ belongs to $\langle k(x_1)\oplus\hdots\oplus k(x_n)\rangle_{D_\text{sg}(X)}$, one gets the following

\begin{cor}\label{isolated} A scheme $X$ satisfying the (ELF) condition has only isolated singularities $\{x_1,\hdots,x_n\}$ if and only if 
 $$D_\text{sg}(X)\simeq\langle k(x_1)\oplus \hdots\oplus k(x_n)\rangle_{D_\text{sg}(X)}\, .$$ \qed
 \end{cor}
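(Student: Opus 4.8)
The plan is to reduce the whole statement to part (2) of the corollary immediately preceding it, namely the characterisation
$$D_{\text{sg}}(X)=\langle q(Coh_Z(X))\rangle_{D_{\text{sg}}(X)}\iff S_X\subseteq Z,$$
applied to the finite closed set $Z=\{x_1,\dots,x_n\}$ with open complement $U=X\setminus Z$. The bridge between that corollary and the assertion to be proved is the identification of thick subcategories
$$\langle q(Coh_Z(X))\rangle_{D_{\text{sg}}(X)}=\langle k(x_1)\oplus\dots\oplus k(x_n)\rangle_{D_{\text{sg}}(X)},$$
valid for any finite set of closed points. First I would check the easy inclusion: each $k(x_i)$ is a coherent sheaf supported on $Z$, so $q(k(x_i))$ lies in $\langle q(Coh_Z(X))\rangle_{D_{\text{sg}}(X)}$; since a thick subcategory generated by a direct sum coincides with the one generated by its summands (thick subcategories being closed under retracts), this gives the inclusion $\langle k(x_1)\oplus\dots\oplus k(x_n)\rangle_{D_{\text{sg}}(X)}\subseteq\langle q(Coh_Z(X))\rangle_{D_{\text{sg}}(X)}$.

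For the reverse inclusion I would invoke the fact recorded just before the statement: every coherent sheaf supported on $\{x_1,\dots,x_n\}$ belongs to $\langle k(x_1)\oplus\dots\oplus k(x_n)\rangle_{D_{\text{sg}}(X)}$, whence $q(Coh_Z(X))\subseteq\langle k(x_1)\oplus\dots\oplus k(x_n)\rangle_{D_{\text{sg}}(X)}$ and therefore the generated thick subcategories coincide. This is the one genuinely non-formal input, and I expect it to be the main point requiring care. The reason it holds is a dévissage argument: a coherent sheaf $\mathcal{F}$ supported on the finite set $Z$ has finite length, so it admits a finite filtration whose successive quotients are skyscraper sheaves $k(x_i)$ (over the algebraically closed field $k$ the residue field at each $x_i$ is $k$ itself). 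Reading the filtration steps as distinguished triangles, $\mathcal{F}$ is obtained from the $k(x_i)$ by finitely many cones and shifts, hence lies in the triangulated, a fortiori thick, subcategory they generate; applying the localisation functor $q$ gives the claim.

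With this identification in hand both implications are immediate. For the forward direction, if $X$ has only isolated singularities $\{x_1,\dots,x_n\}$ then $S_X=\{x_1,\dots,x_n\}\subseteq Z$, so part (2) of the preceding corollary yields $D_{\text{sg}}(X)=\langle q(Coh_Z(X))\rangle_{D_{\text{sg}}(X)}$, which by the identification equals $\langle k(x_1)\oplus\dots\oplus k(x_n)\rangle_{D_{\text{sg}}(X)}$. For the converse, if $D_{\text{sg}}(X)\simeq\langle k(x_1)\oplus\dots\oplus k(x_n)\rangle_{D_{\text{sg}}(X)}$, that is, the thick subcategory on the right is all of $D_{\text{sg}}(X)$, then by the identification $D_{\text{sg}}(X)=\langle q(Coh_Z(X))\rangle_{D_{\text{sg}}(X)}$, and part (2) of the corollary forces $S_X\subseteq Z=\{x_1,\dots,x_n\}$; since this is a finite ($0$-dimensional) set of closed points, the singularities of $X$ are isolated and contained in $\{x_1,\dots,x_n\}$. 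Together these give the desired equivalence. The only step that is not pure formalism is the dévissage lemma of the second paragraph; everything else is a direct application of Theorem~\ref{th:chen} and its first corollary.
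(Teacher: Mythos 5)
Your proof is correct and follows the same route the paper takes: the paper's (very terse) justification is precisely the observation that every coherent sheaf supported on $\{x_1,\hdots,x_n\}$ lies in $\langle k(x_1)\oplus\hdots\oplus k(x_n)\rangle_{D_\text{sg}(X)}$, combined with part (2) of the corollary to Theorem~\ref{th:chen} applied to $Z=\{x_1,\hdots,x_n\}$. You have merely spelled out the d\'evissage by composition series and the equality of the two thick subcategories, which the paper leaves implicit.
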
 

Another important feature of the derived category of singularities is that  it is not usually idempotent complete. So we will need to consider its idempotent completion constructed as follows.

Remember that an additive category $\mathcal{A}$ is said to be {\it idempotent} if any idempotent morphism $e\colon a\to a$, $e^2=e$, arises from a splitting of $a$, $$a=\Img(e)\oplus\ker(e)\, .$$
 
 If $\mathcal{A}$ is an additive category, the {\it idempotent completion} of $\mathcal{A}$ is the category $\overline{\mathcal{A}}$ defined as follows. Objects of $\overline{\mathcal{A}}$ are pairs $(a, e)$ where $a$ is an object of $\mathcal{A}$ and $e\colon a\to a$ is an idempotent morphism. A morphism in $\overline{\mathcal{A}}$ from $(a, e)$ to $(b,f)$ is a morphism $\alpha\colon  a\to b$ such that $\alpha\circ e=f\circ \alpha=\alpha$.
 
 There is a canonical fully faithful functor $i\colon \mathcal{A}\to \overline{\mathcal{A}}$ defined by sending $a\to (a,1_a)$.
 
 The following theorem can be found in \cite{BalSch01}.
 
 \begin{thm}\label{t:completion} Let $\mathcal{D}$ be a triangulated category. Then its idempotent completion $\overline{\mathcal{D}}$ admits a unique structure of triangulated category such that the canonical functor $i\colon \mathcal{D}\to \overline{\mathcal{D}}$ becomes an exact functor. If $\overline{\mathcal{D}}$ is endowed with this structure, then for each idempotent complete triangulated category $\mathcal{C}$ the functor $i$ induces an equivalence
$$\Hom(\overline{\mathcal{D}},\mathcal{C})\simeq \Hom(\mathcal{D},\mathcal{C})$$
where $\Hom$ denotes the category of exact functors.
 \end{thm}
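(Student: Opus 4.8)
The plan is to follow the strategy of Balmer and Schlichting \cite{BalSch01}, transporting the triangulated structure of $\mathcal{D}$ to $\overline{\mathcal{D}}$ and then verifying the universal property directly. First I would record the additive preliminaries: the completion $\overline{\mathcal{D}}$ is additive, the canonical functor $i$ is additive and fully faithful, and, crucially, every object is a retract of one in the image of $i$, since $(a,e)\oplus(a,1_a-e)\cong i(a)$. The translation is then forced: set $(a,e)[1]:=(a[1],e[1])$, which is well defined because the shift on $\mathcal{D}$ is an additive automorphism, so $e[1]$ is again idempotent, and this evidently gives an automorphism of $\overline{\mathcal{D}}$ extending the one on $\mathcal{D}$.

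Next I would declare a triangle in $\overline{\mathcal{D}}$ to be distinguished precisely when it is a direct summand of the image under $i$ of a distinguished triangle of $\mathcal{D}$, and verify the axioms (TR1)--(TR4) for this class; note that closure under summands is then essentially definitional. Rotation (TR2) and the fill-in morphism axiom (TR3) transfer with little trouble from $\mathcal{D}$ together with this closure. The heart of the matter is the completion axiom (TR1): given $u\colon (a,e)\to(b,f)$, realized by a map $\bar u\colon a\to b$ in $\mathcal{D}$ with $f\bar u e=\bar u$, one must exhibit a distinguished triangle on $u$. The idea is to view $u$ as a direct summand of $i(\bar u)\colon i(a)\to i(b)$ under the decompositions $i(a)=(a,e)\oplus(a,1_a-e)$ and $i(b)=(b,f)\oplus(b,1_b-f)$, complete $\bar u$ to a triangle in $\mathcal{D}$, transport it by $i$, and split off the contribution of the complementary (zero) summand. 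I expect this to be the main obstacle: because cones in $\mathcal{D}$ are not functorial, the idempotents $e,f$ need not lift to the cone of $\bar u$ in $\mathcal{D}$, so one cannot naively form the cone of $u$ there. The whole point of passing to $\overline{\mathcal{D}}$ is that in the completed category the relevant idempotent on the cone does split, and the summand one extracts is a genuine distinguished triangle. The octahedral axiom (TR4) is the analogous technical crux, handled by reducing to octahedra in $\mathcal{D}$ and passing to summands.

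For uniqueness I would argue that any triangulated structure on $\overline{\mathcal{D}}$ making $i$ exact must contain every image $i(\Delta)$ of a distinguished triangle $\Delta$ of $\mathcal{D}$; and since $\overline{\mathcal{D}}$ is idempotent complete as an additive category (irrespective of the triangulation), the classical fact that a direct summand of a distinguished triangle is distinguished forces every summand of such an $i(\Delta)$ to be distinguished as well. This pins down the class of distinguished triangles exactly, yielding uniqueness.

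Finally, for the universal property, let $F\colon\mathcal{D}\to\mathcal{C}$ be exact with $\mathcal{C}$ idempotent complete. I would define the extension $\overline F\colon\overline{\mathcal{D}}\to\mathcal{C}$ on objects by $\overline F(a,e):=\Img\big(F(e)\big)$, which exists because $F(e)$ is an idempotent in the idempotent-complete category $\mathcal{C}$, and on morphisms by the induced maps on images; then $\overline F\circ i\cong F$ since $\overline F(i(a))=\Img(F(1_a))=F(a)$. Exactness of $\overline F$ follows because each distinguished triangle of $\overline{\mathcal{D}}$ is a summand of some $i(\Delta)$, $F$ is exact, and summands of distinguished triangles in $\mathcal{C}$ are again distinguished. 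Hence $G\mapsto G\circ i$ hits every $F$, and it is fully faithful because every object of $\overline{\mathcal{D}}$ is a retract of one in $i(\mathcal{D})$, so natural transformations of exact functors on $\overline{\mathcal{D}}$ are determined by, and can be reconstructed from, their restrictions along $i$. This gives the asserted equivalence $\Hom(\overline{\mathcal{D}},\mathcal{C})\simeq\Hom(\mathcal{D},\mathcal{C})$.
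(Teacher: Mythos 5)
First, a remark on the comparison itself: the paper does not prove Theorem \ref{t:completion} at all --- it is quoted from \cite{BalSch01} and closed with a \textit{qed} box --- so the only meaningful benchmark is Balmer and Schlichting's own argument. Your outline reproduces its architecture faithfully: the forced shift $(a,e)[1]=(a[1],e[1])$, the declaration that the distinguished triangles of $\overline{\mathcal{D}}$ are the direct summands of images of distinguished triangles of $\mathcal{D}$, the reduction of everything to the fact that each object of $\overline{\mathcal{D}}$ is a retract of an object of $i(\mathcal{D})$, and the construction $\overline F(a,e)=\Img(F(e))$ for the universal property are all exactly the right moves.

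The genuine gap is at the step you yourself single out as the crux, and your resolution of it is circular. To get a cone on $u\colon(a,e)\to(b,f)$ you want to exhibit the desired triangle as a direct summand of $i(\Delta)$, where $\Delta$ is a distinguished triangle on $\bar u$ in $\mathcal{D}$; for that you need an idempotent endomorphism of the \emph{entire candidate triangle} $i(\Delta)$ restricting to $e$ and $f$ on the first two vertices. TR3 only produces some fill-in $g$ on the cone making $(e,f,g)$ an endomorphism of $i(\Delta)$, and $g$ has no reason to be idempotent; the sentence ``in the completed category the relevant idempotent on the cone does split'' presupposes the idempotent whose existence is precisely the problem. Correcting such a fill-in to an idempotent one --- equivalently, proving that a candidate triangle which is a direct factor of a distinguished triangle is itself distinguished --- is the real content of \cite{BalSch01}. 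Note that this is the same statement you invoke elsewhere as ``the classical fact that a direct summand of a distinguished triangle is distinguished'' (in the uniqueness argument and in the exactness of $\overline F$); it is not a background fact one may cite here, but the engine of the theorem, and the one-clause treatment of TR4 hides the same difficulty. In short: the skeleton is correct and matches the cited source, but as written the proposal proves the easy parts and asserts the hard one.
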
\qed

To finish this section, let us remember the structure of the category of singularities for a Gorenstein scheme with isolated singularities. Let $(A,\mathfrak{m})$ be a local commutative  ring of Krull dimension $d$. Recall that  a finitely generated $A$-module $M$ is called a {\it maximal Cohen Macaulay} module if $\text{depth}(M)=d$ or equivalently $\text{Ext}^i(A/\mathfrak{m}, M)=0$ for all $i<d$. The category of all maximal Cohen-Macaulay modules over $A$ is denoted by $\CM(A)$. We will also denote $\sCM(A)$ the {\it stable category of Cohen-Macaulay modules} over $A$ which is defined as follows:
\begin{itemize}\item Ob$(\sCM(A))$=Ob($\CM(A)$).
\item $\Hom_{\sCM(A)}(M,N)=\Hom_{\CM(A)}(M,N)/\mathcal{P}(M,N)$ where $\mathcal{P}(M,N)$ is the submodule of $\Hom_{\CM(A)}(M,N)$ generated by those morphisms which factor through a free $A$-module.
\end{itemize}

When $(A,\mathfrak{m})$ is Gorenstein,  Buchweitz proves that the stable category of Cohen-Macaulay modules 
 $\underline{CM}(A)$ is a triangulated category and there is an exact equivalence  $\underline{CM}(A)\simeq D_{sg}(A)$ with the derived category of singularities. 
\

 The following result describes the category of singularities for Gorenstein schemes with isolated singularities. It is a well-known result to experts (see \cite{BK11}, \cite{Or09} or \cite{IyWe11}).

 \begin{thm}\label{t:equivalencia} Suppose that $X$ is a Gorenstein scheme over $k$ satisfying the (ELF) condition and only with isolated singularities $\{x_1,\hdots, x_n\}$. Then,
 there is a fully faithful functor $D_\text{sg}(X)\hookrightarrow \oplus_{i=1}^n \sCM(\mathcal{O}_{x_i})$ which is essentially surjective up to direct summands. Taking the idempotent completion gives an equivalence of categories $$\overline{D_\text{sg}(X)}\simeq \oplus_{i=1}^n\sCM(\hat{\mathcal{O}} _{x_i})$$ where $\hat{\mathcal{O}} _{X,x_i}$ is the completion of the local ring $\mathcal{O}_{X,x_i}$ at the point  $x_i$.
 \end{thm}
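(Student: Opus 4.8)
The plan is to localize $D_\text{sg}(X)$ at the finitely many singular points, to identify each local contribution with a stable category of maximal Cohen--Macaulay modules through Buchweitz's theorem, and then to compare the local rings with their completions after idempotent completion. For each $i$, localization of coherent sheaves at $x_i$ is exact and carries perfect complexes to perfect complexes, so it descends to the Verdier quotients and gives a functor $D_\text{sg}(X)\to D_\text{sg}(\mathcal{O}_{X,x_i})$; assembling these produces
$$\Phi\colon D_\text{sg}(X)\longrightarrow\bigoplus_{i=1}^n D_\text{sg}(\mathcal{O}_{X,x_i}).$$
Since $X$ is Gorenstein each $\mathcal{O}_{X,x_i}$ is a Gorenstein local ring, so Buchweitz's equivalence $D_\text{sg}(\mathcal{O}_{X,x_i})\simeq\sCM(\mathcal{O}_{X,x_i})$ identifies the target with $\bigoplus_{i=1}^n\sCM(\mathcal{O}_{X,x_i})$, which is the functor claimed in the statement.

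Next I would prove that $\Phi$ is fully faithful and essentially surjective up to direct summands. By Corollary \ref{isolated} every object of $D_\text{sg}(X)$ lies in $\langle k(x_1)\oplus\dots\oplus k(x_n)\rangle_{D_\text{sg}(X)}$, so the category is concentrated at the finite singular set; choosing pairwise disjoint affine neighbourhoods $U_i\ni x_i$ containing no other singular point and applying Chen's Theorem \ref{th:chen} together with Corollary \ref{isolated} for each $U_i$, the contributions of distinct points become independent and $D_\text{sg}(X)$ is recovered from the local singularity categories at the $x_i$. The content is then the single-point comparison: that the morphism groups in $D_\text{sg}(X)$ between objects concentrated at $x_i$ agree with the stable modules $\Hom_{\sCM(\mathcal{O}_{X,x_i})}(\mathcal{F}_{x_i},\mathcal{G}_{x_i})$, which gives full faithfulness, while density only up to summands reflects the fact that an MCM module over $\mathcal{O}_{X,x_i}$ extends to a coherent sheaf near $x_i$ recovering the prescribed local datum only up to a direct summand.

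To obtain the final equivalence I would pass to idempotent completions. By the universal property of Theorem \ref{t:completion}, composing $\Phi$ with the canonical embedding into the (idempotent complete) target extends it to an exact functor $\overline{\Phi}\colon\overline{D_\text{sg}(X)}\to\overline{\bigoplus_i\sCM(\mathcal{O}_{X,x_i})}$, and a fully faithful functor whose image is dense up to summands becomes an equivalence after idempotent completion; since idempotent completion commutes with finite direct sums this yields $\overline{D_\text{sg}(X)}\simeq\bigoplus_i\overline{\sCM(\mathcal{O}_{X,x_i})}$. It then remains to identify $\overline{\sCM(\mathcal{O}_{X,x_i})}$ with $\sCM(\hat{\mathcal{O}}_{X,x_i})$: the completion functor $M\mapsto M\otimes_{\mathcal{O}_{X,x_i}}\hat{\mathcal{O}}_{X,x_i}$ is fully faithful because over an isolated singularity the stable $\Hom$ modules have finite length and are unchanged by completion, and it is dense up to summands by Artin--Elkik approximation; moreover, because $\hat{\mathcal{O}}_{X,x_i}$ is a complete local ring, $\CM(\hat{\mathcal{O}}_{X,x_i})$ satisfies Krull--Schmidt, so $\sCM(\hat{\mathcal{O}}_{X,x_i})$ is already idempotent complete. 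Theorem \ref{t:completion} then gives $\overline{\sCM(\mathcal{O}_{X,x_i})}\simeq\sCM(\hat{\mathcal{O}}_{X,x_i})$ and hence the asserted equivalence.

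The main obstacle is the single-point full faithfulness underlying $\Phi$: proving that morphisms in the global category of singularities are detected locally at each isolated singular point, with distinct points contributing orthogonally, requires a genuine local--global analysis of the Verdier quotient rather than a formal manipulation. A secondary delicate point is the density up to summands of the completion functor, which rests on Artin--Elkik approximation to descend maximal Cohen--Macaulay $\hat{\mathcal{O}}_{X,x_i}$-modules to $\mathcal{O}_{X,x_i}$ up to direct summands.
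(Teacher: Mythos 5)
The paper itself offers no proof of Theorem \ref{t:equivalencia}: it is stated as a result ``well-known to experts'' with references to \cite{BK11}, \cite{Or09} and \cite{IyWe11}, so the only thing to compare your proposal against is the argument in those sources, chiefly Orlov's paper \cite{Or09} on formal and idempotent completions of singularity categories. Your outline reproduces that route exactly: localize $D_\text{sg}(X)$ at the finitely many singular points, identify each local piece with $\sCM(\mathcal{O}_{X,x_i})$ via Buchweitz, and compare with $\sCM(\hat{\mathcal{O}}_{X,x_i})$ after idempotent completion. All the formal steps you use are correct (idempotent completion commutes with finite direct sums; a fully faithful functor with dense-up-to-summands image becomes an equivalence after idempotent completion; $\CM(\hat{\mathcal{O}}_{X,x_i})$ is Krull--Schmidt, so its stable category is already idempotent complete).

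What keeps this from being a proof is that the two points you yourself flag as obstacles are exactly where all the content lies, and they are asserted rather than established. For (a), full faithfulness of $\Phi$: the standard argument (Orlov) first replaces every object of $D_\text{sg}(X)$ by a coherent sheaf that is locally free away from $\{x_1,\dots,x_n\}$ and has vanishing higher $\SExt$'s into $\mathcal{O}_X$; only then are the stable $\Hom$'s finite-length modules supported on the singular set, so that they split canonically as direct sums of their localizations and morphisms between distinct points vanish. Without producing such representatives the ``local--global analysis'' you invoke does not get off the ground. For (b), density up to summands of $M\mapsto M\otimes\hat{\mathcal{O}}_{X,x_i}$: this rests on Elkik/Artin approximation and requires the local rings to be excellent (or at least approximable) -- automatic for the quasi-projective curves the paper applies the theorem to, but not an immediate consequence of the (ELF) hypothesis, so it deserves an explicit justification or a hypothesis. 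In short: as a strategy your proposal coincides with the proof the paper is citing; as a proof it is incomplete at precisely the two steps you identify.
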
 \qed

\section{Geometric consequences of an equivalence}\label{propiedades}

The existence of a Fourier-Mukai functor between the derived categories of two schemes has important geometric  consequences not only in the smooth but also in the singular case. In this section we give those that we will need for our proposals.

Let $X$ and $Y$ be two projective Fourier-Mukai partners. Then the following is true:
\begin{enumerate}
\item If $X$ is connected, $Y$ is connected.
\item  If $X$ is a smooth scheme, then $Y$ is also smooth  (see for instance \cite{Bri98t}).
\item  If $X$ is an equidimensional Cohen-Macaulay  (resp. Gorenstein) scheme and $Y$ is reduced, then $Y$ is also equidimensional and it is  Cohen-Macaulay (resp. Gorenstein) as well (Theorem 4.4 in \cite{HLS08}).
\item $X$ and $Y$ have the same dimension (Theorem 4.4 in \cite{HLS08}).
\item If $X$ is Gorenstein,  $\omega_X$ is trivial if and only if $\omega_Y$ is trivial (Proposition 1.30 in \cite{HLS07}).
\item If  $X$ and $Y$ are Gorenstein, there is an  isomorphism  $$\oplus_{i\geq 0}H^0(X, \omega_X^i)\simeq \oplus_{i\geq 0}H^0(Y, \omega_Y^i)$$
between  the graded canonical algebras (Proposition 2.1 \cite{LM14}). In particular, if $X$ and $Y$ are two curves, then they have the same arithmetic genus.
\end{enumerate}
In the same vein, we have the following results.

\begin{cor}\label{isoK} Let $X$ and $Y$ be two projective Fourier-Mukai partners. For any $i\leq 0$, there exist group isomorphisms
$$G_0(X)\simeq G_0(Y)\quad\text{ and }\quad K^i(X)\simeq K^i(Y)$$ 
between their Grothendieck groups and their negative $K$-groups.
\end{cor}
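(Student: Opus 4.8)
The statement has three parts, of which the invariance of $G_0$ and of $K^0$ are immediate and the negative $K$-groups carry the real content. Since $X$ and $Y$ are projective Fourier-Mukai partners, the descriptions in \eqref{partners} furnish exact equivalences $\cdbc X \simeq \cdbc Y$ and $\Perf(X)\simeq\Perf(Y)$. An exact equivalence of triangulated categories induces an isomorphism of Grothendieck groups, so combining this with the identification $G_0(X)\simeq K_0(\cdbc X)$ recalled in Section 4 gives $G_0(X)\simeq K_0(\cdbc X)\simeq K_0(\cdbc Y)\simeq G_0(Y)$. Likewise, from $\Perf(X)\simeq\Perf(Y)$ and the Thomason--Trobaugh identification $K^0(X)\simeq K_0(\Perf(X))$ of Theorem \ref{vb} we obtain $K^0(X)\simeq K_0(\Perf X)\simeq K_0(\Perf Y)\simeq K^0(Y)$, settling the case $i=0$.

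For $i<0$ the groups $K^i$ are defined by Bass's exact sequences, which involve the auxiliary schemes $X[t]$, $X[t^{-1}]$ and $X[t,t^{-1}]$; the plan is to show that a single Fourier-Mukai equivalence propagates to all of these at once. Writing $\Phi^{\cplx K}_{X\to Y}\colon \cdbc X \iso \cdbc Y$ for the equivalence, with kernel $\cplx K\in\cdbc{X\times Y}$, I would pull $\cplx K$ back along the projection $(X\times Y)\times\Spec\bbZ[t]\to X\times Y$ and form the associated relative integral functor over the base $\Spec\bbZ[t]$. Because convolution of kernels commutes with the flat base change $\Spec\bbZ[t]\to\Spec\bbZ$, the identities expressing that $\cplx K$ is invertible (together with the kernel of its quasi-inverse) base-change to the analogous identities over $\bbZ[t]$; hence the base-changed functor is again an equivalence $\cdbc{X[t]}\iso\cdbc{Y[t]}$, restricting to $\Perf(X[t])\simeq\Perf(Y[t])$. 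The same argument applies to $X[t^{-1}]$, to $X[t,t^{-1}]$, and to any iterated polynomial or Laurent extension, and by construction these equivalences are compatible with the flat pullback functors $\Perf(X[t])\to\Perf(X[t,t^{-1}])$ and their $Y$-counterparts, since all are induced by pullbacks of one and the same kernel.

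With these compatible equivalences in hand, Bass's defining sequences for $X$ and for $Y$ fit into a ladder whose vertical maps on the first three columns are the $K^i$-isomorphisms coming from the base-changed equivalences. I would then argue by downward induction on $i\le 0$: the case $i=0$ is handled above, not only for $X$ but simultaneously for every polynomial or Laurent extension; and given the isomorphisms in degree $i$ for all such schemes, the five lemma applied to the ladder of Bass sequences forces the connecting map $K^{i-1}(\Perf X)\to K^{i-1}(\Perf Y)$ to be an isomorphism, and likewise with $X,Y$ replaced by any of their extensions, which feeds the next step of the induction. This yields $K^i(X)\simeq K^i(Y)$ for every $i\le 0$.

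The main obstacle is the content of the second paragraph: checking that the chosen integral functor remains a Fourier-Mukai equivalence after base change to $\Spec\bbZ[t]$, and that the resulting isomorphisms commute with the restriction maps in Bass's sequences so that the ladder is genuinely commutative. An alternative route, bypassing the explicit base change, is to lift the equivalence $\Perf(X)\simeq\Perf(Y)$ to an equivalence of dg enhancements, which is legitimate by the strong uniqueness of enhancements underlying Corollary \ref{representability}, and then to invoke the invariance under dg equivalences of non-connective $K$-theory, which computes the Thomason--Trobaugh groups $K^i$ in all degrees.
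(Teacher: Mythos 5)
Your first paragraph is exactly the paper's proof: the authors deduce $\cdbc{X}\simeq\cdbc{Y}$ and $\Perf(X)\simeq\Perf(Y)$ from \eqref{partners} and then simply ``conclude using Theorem \ref{vb}'', which literally only covers $G_0$ and $K^0$. For $i<0$ the paper offers no further argument; the implicit justification is that the Bass--Thomason--Trobaugh negative $K$-groups are invariants of the triangulated category $\Perf(-)$ (this is the content of Schlichting's \cite{Schilchting06}, which the paper cites for a different purpose in Theorem \ref{t2}). Your second and third paragraphs therefore supply a genuinely different, and more explicit, route to the negative-degree case: base-changing the Fourier--Mukai kernel to $X[t]$, $X[t^{-1}]$, $X[t,t^{-1}]$ and their iterates, and chasing the resulting commutative ladder of Bass sequences by downward induction. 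That argument is sound (flat base change of kernels and of the convolution identities is standard, and the four-term exact sequences do propagate the isomorphism to $K^{i-1}$ once the first three columns are handled for all extensions), and it has the virtue of making visible exactly what must be checked. The one caveat is that it needs the equivalence to be of integral-functor type, which by \eqref{partners} is only guaranteed when $T_0(\mathcal{O}_X)=0$; the corollary as stated assumes only projectivity, so strictly speaking your kernel route covers slightly less than the statement claims (harmless for the curves to which the corollary is applied, which have no embedded points). Your alternative via dg enhancements and the dg-invariance of non-connective $K$-theory is the cleanest fix and is closest in spirit to what the paper tacitly relies on.
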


\begin{proof} By the characterization of the Fourier-Mukai partners given in \eqref{partners}, one has exact equivalences $\cdbc{X} \simeq \cdbc{Y}$ and $\Perf(X)\simeq \Perf(Y)$. Since both schemes are quasi-projective, one concludes using Theorem \ref{vb}.
\end{proof}

\begin{cor} \label{c:Pic} If  $X$ and $Y$ are two connected projective curves over an algebraically closed field $k$ that are Fourier-Mukai partners, then there is a group isomorphism
$$\Pic(X)\simeq \Pic(Y)$$ between their Picard schemes.
\end{cor}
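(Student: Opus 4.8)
The plan is to recover $\Pic(X)$ from the abelian group $K^0(X)$, which by Corollary \ref{isoK} is a Fourier-Mukai invariant, and then to strip off the extra rank contribution by a purely group-theoretic argument that exploits the divisibility of the identity component of the Picard scheme.

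First I would invoke Proposition \ref{Kcurvas}(1): since $X$ is a connected projective curve one has $H^0(X,\bbZ)=\bbZ$, so $K^0(X)\simeq \bbZ\oplus\Pic(X)$, and likewise $K^0(Y)\simeq\bbZ\oplus\Pic(Y)$. The equivalence $\Perf(X)\simeq\Perf(Y)$ coming from the Fourier-Mukai partnership gives, via Corollary \ref{isoK}, an isomorphism $K^0(X)\simeq K^0(Y)$, hence an abstract group isomorphism $\bbZ\oplus\Pic(X)\simeq\bbZ\oplus\Pic(Y)$. The subtlety, and the main obstacle, is that this isomorphism need \emph{not} respect the rank splitting of Proposition \ref{Kcurvas}(1): a Fourier-Mukai functor may interchange rank and determinant, so one cannot cancel the free summand $\bbZ$ simply by matching ranks. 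A genuine cancellation argument is needed.

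To supply it I would analyse the internal structure of $\Pic(X)$. Let $\Pic^0(X)$ be the identity component of the Picard scheme (representable by the theorem of Section 3) and $\NS(X):=\Pic(X)/\Pic^0(X)$ the Néron--Severi group, so that $0\to\Pic^0(X)\to\Pic(X)\to\NS(X)\to 0$ is exact. By Proposition \ref{Pic} the kernel of $\Pic(X)\to\Pic(X_{\text{red}})$ is connected, hence lies in $\Pic^0(X)$ and does not affect the component group, so $\NS(X)\simeq\NS(X_{\text{red}})$, which is finitely generated (free, via multidegrees on the components). On the other hand, since $k$ is algebraically closed of characteristic zero, $\Pic^0(X)$ is the group of $k$-points of a connected commutative algebraic group, hence an extension of an abelian variety by a torus and a unipotent group; over such a field all three have divisible groups of points (for $A(k)$ and $k^\ast$ this is clear, and $(k,+)$ is a $\bbQ$-vector space), so $\Pic^0(X)$ is \emph{divisible}. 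Being divisible it is an injective $\bbZ$-module, so the exact sequence splits and $\Pic(X)\simeq\Pic^0(X)\oplus\NS(X)$; moreover, as $\NS(X)$ is finitely generated it has no nonzero divisible subgroup, whence $\Pic^0(X)$ is exactly the maximal divisible subgroup of $\Pic(X)$, and therefore also of $K^0(X)\simeq\bbZ\oplus\Pic(X)$.

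Finally I would run the cancellation using these observations. The maximal divisible subgroup is an isomorphism invariant, so $K^0(X)\simeq K^0(Y)$ forces $\Pic^0(X)\simeq\Pic^0(Y)$; passing to the quotients by these maximal divisible subgroups gives $\bbZ\oplus\NS(X)\simeq\bbZ\oplus\NS(Y)$, an isomorphism of \emph{finitely generated} abelian groups, for which cancelling the free summand $\bbZ$ is legitimate by the structure theorem, yielding $\NS(X)\simeq\NS(Y)$. Combining this with $\Pic^0(X)\simeq\Pic^0(Y)$ through the splitting gives $\Pic(X)\simeq\Pic^0(X)\oplus\NS(X)\simeq\Pic^0(Y)\oplus\NS(Y)\simeq\Pic(Y)$, as claimed. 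I expect the only delicate points to be the divisibility of $\Pic^0(X)$ (where characteristic zero and $k=\bar k$ are used essentially) and the bookkeeping ensuring that the maximal divisible subgroup is precisely $\Pic^0$, both of which are handled above.
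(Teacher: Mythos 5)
Your proof is correct and follows the same basic route as the paper: the paper's entire argument for Corollary \ref{c:Pic} is the one line ``this follows from Corollary \ref{isoK} and Proposition \ref{Kcurvas}'', i.e.\ from the Fourier--Mukai invariance of $K^0$ together with the splitting $K^0(X)\simeq \mathbb{Z}\oplus\Pic(X)$. What you add is a justification of the cancellation step that the paper leaves implicit: an abstract isomorphism $\mathbb{Z}\oplus\Pic(X)\simeq\mathbb{Z}\oplus\Pic(Y)$ does not formally yield $\Pic(X)\simeq\Pic(Y)$, since the equivalence need not respect rank and determinant. Your resolution --- identifying $\Pic^0$ as the maximal divisible subgroup (using that $\Pic^0$ is an extension of an abelian variety by a torus and a vector group over $k=\bar k$ of characteristic zero, hence divisible, while $\NS\simeq\mathbb{Z}^{N}$ is free), quotienting by it, and then cancelling $\mathbb{Z}$ among finitely generated free groups --- is sound; each ingredient (divisibility of $A(k)$, $k^{\ast}$ and $(k,+)$; connectedness of the kernel $U_X$ of $\Pic(X)\to\Pic(X_{\mathrm{red}})$ from Proposition \ref{Pic}, which puts it inside $\Pic^0$; freeness of the multidegree quotient for a projective curve) checks out. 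An alternative, slightly slicker way to close the same gap is to invoke the Cohn--Walker cancellation theorem, which says that finitely generated abelian groups (in particular $\mathbb{Z}$) cancel from arbitrary direct sums of abelian groups; but your structural argument is self-contained, makes transparent where $k=\bar k$ and characteristic zero enter, and in effect proves the stronger statement that $\Pic^0$ and $\NS$ are separately Fourier--Mukai invariants, which is exactly the information the paper later exploits in the proof of Theorem \ref{t1}.
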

\begin{proof} This follows from Corollary \ref{isoK} and Proposition \ref{Kcurvas}.
\end{proof}
\begin{lem} \label{Dsin} Let $X$  and $Y$ be two (ELF) schemes over $k$. If $F\colon \catD(X)\to \catD(Y)$ is an exact equivalence, then there is an exact equivalence $$\bar{F}\colon D_\text{sg}(X)\to D_\text{sg}(Y)$$ between their derived categories of singularities.
\end{lem}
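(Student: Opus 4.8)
The plan is to show that $F$ restricts to equivalences $\Perf(X)\simeq\Perf(Y)$ and $\cdbc{X}\simeq\cdbc{Y}$ that are compatible with the inclusions $\Perf\subseteq\cdbc{}$, and then to descend along the two Verdier localizations defining the singularity categories. For the perfect complexes this is immediate: an equivalence of the unbounded categories $\catD(X)$ and $\catD(Y)$ preserves arbitrary direct sums, hence carries compact objects to compact objects and restricts to an equivalence $\catD(X)^c\simeq\catD(Y)^c$; by Neeman's theorem recalled above, $\catD(X)^c=\Perf(X)$, so this yields $\Perf(X)\simeq\Perf(Y)$.

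The substantial point is that $F$ also preserves the bounded coherent subcategory. Since $X$ is (ELF), and hence quasi-compact, separated and noetherian, $\catD(X)$ is compactly generated; I would fix a compact generator $G$ and use the intrinsic description
\[
E\in\cdbc{X}\ \Longleftrightarrow\ \textstyle\bigoplus_{n\in\bbZ}\Hom(G,E[n])\ \text{is finitely generated over}\ \Lambda:=\bigoplus_{n\in\bbZ}\Hom(G,G[n]).
\]
For $X=\Spec R$ with $G=\cO_X$ one has $\Lambda=R$ and the module on the right is $\bigoplus_n H^n(E)$, which is finitely generated over $R$ precisely when $E$ is cohomologically bounded with coherent cohomology, i.e.\ $E\in\cdbc{X}$; the general case follows by derived Morita theory. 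Granting this, $F$ sends $G$ to a compact generator $F(G)$ of $\catD(Y)$ and induces an isomorphism of graded rings $\Lambda\isom\bigoplus_n\Hom(F(G),F(G)[n])$ together with compatible isomorphisms $\bigoplus_n\Hom(G,E[n])\isom\bigoplus_n\Hom(F(G),F(E)[n])$ of graded modules; finite generation is therefore preserved, so $E\in\cdbc{X}$ if and only if $F(E)\in\cdbc{Y}$, and $F$ restricts to an equivalence $\cdbc{X}\simeq\cdbc{Y}$.

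Finally I would descend to the quotients. Because $\Perf(X)\subseteq\cdbc{X}$ and $\Perf(Y)\subseteq\cdbc{Y}$ are the thick subcategories defining $q_X\colon\cdbc{X}\to D_\text{sg}(X)$ and $q_Y\colon\cdbc{Y}\to D_\text{sg}(Y)$, the composite $q_Y\circ F|_{\cdbc{X}}$ annihilates $\Perf(X)$, since it lands in $\Perf(Y)=\ker q_Y$. By the universal property of the Verdier quotient it factors through a unique exact functor $\bar F\colon D_\text{sg}(X)\to D_\text{sg}(Y)$. The same construction applied to $F^{-1}$ produces $\overline{F^{-1}}$, and uniqueness of the factorizations forces $\bar F\circ\overline{F^{-1}}\simeq\Id$ and $\overline{F^{-1}}\circ\bar F\simeq\Id$; hence $\bar F$ is an exact equivalence.

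The main obstacle is the second step: whereas $\Perf(X)=\catD(X)^c$ is manifestly intrinsic, the subcategory $\cdbc{X}$ must be characterised inside $\catD(X)$ in a way that simultaneously captures cohomological boundedness and coherence, and must then be seen to be invariant under any equivalence. The finite-generation criterion above does this transparently in the affine case, and its verification for general (ELF) schemes---equivalently, the assertion that an equivalence of the large derived categories necessarily preserves bounded coherent objects---is the technical heart of the lemma.
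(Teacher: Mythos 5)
Your first and third steps coincide with the paper's argument: $\Perf(X)=\catD(X)^c$ is preserved because an equivalence preserves compact objects (Neeman), and once one knows that $F$ carries $\Perf(X)$ into $\Perf(Y)$ and $\cdbc{X}$ into $\cdbc{Y}$ (and likewise for a quasi-inverse), the induced functors on the Verdier quotients form an adjoint equivalence. The paper packages the descent as a general statement about adjoint pairs respecting full triangulated subcategories, but this is the same content as your universal-property argument. You are also right that the preservation of $\cdbc{}$ is the real issue; the paper's proof passes over it in silence (implicitly leaning on the derived Morita theorem quoted in Section 2, which is stated only for projective schemes), so your attempt to supply an intrinsic characterization of $\cdbc{X}$ inside $\catD(X)$ is exactly the right instinct.

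The gap is that the characterization you propose is not proved and, for general (ELF) schemes, is false. ``The general case follows by derived Morita theory'' is not an argument: Morita theory identifies $\catD(X)$ with $D(A)$ for a dg algebra $A$ with $H^*(A)=\Lambda$, but whether $\cdbc{X}$ corresponds to the dg modules with finitely generated total cohomology is precisely the point at issue, and it genuinely uses properness. Concretely, take $X=\mathbb{A}^2\setminus\{0\}$ (quasi-projective, hence (ELF)), $G=\cO_X$ (a compact generator, since $X$ is quasi-affine), and $E=\cO_Z$ with $Z=\{y=0\}\cap X\simeq\Spec k[x,x^{-1}]$ a closed subscheme of $X$. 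Then $E\in\cdbc{X}$, yet
$$\bigoplus_{n\in\bbZ}\Hom(G,E[n])=H^0(Z,\cO_Z)\simeq k[x,x^{-1}]$$
is concentrated in degree $0$, the $\Lambda$-action on it factors through $\Lambda^0=H^0(X,\cO_X)=k[x,y]$, and $k[x,x^{-1}]$ is not finitely generated over $k[x,y]$; so the ``only if'' direction of your criterion already fails. (Here $X$ is regular and $D_\text{sg}(X)=0$, so the lemma is not threatened, but the method is.) Your criterion is the standard one for schemes proper over $k$, where every $\Hom(G,E[n])$ is finite-dimensional and $\Lambda$ is graded-noetherian; if you restrict to projective $X$ and $Y$ --- which is all the paper ever uses --- and cite the identification of $\cdbc{X}$ with the finitely generated part of $D(A)$ in that setting, your argument closes and is in fact more careful on this point than the paper's own proof. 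As stated for arbitrary (ELF) schemes, step 2 is unjustified.
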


\begin{proof} If $\mathcal{C}'$ and ${\mathcal D}'$ are full triangulated subcategories in triangulated categories $\mathcal{C}$ and $\mathcal{D}$ and $F\colon \mathcal{C}\to \mathcal{D}$ and $G\colon \mathcal{D}\to \mathcal{C}$ is an adjoint pair of exact functors satisfying $F({\mathcal C}')\subset {\mathcal D}'$ and $G({\mathcal D}')\subset {\mathcal C}'$, it is easy to prove that the induced functors $$\overline{F}\colon {\mathcal C}/{\mathcal C}'\to {\mathcal D}/{\mathcal D}',\quad \overline{G}\colon {\mathcal D}/{\mathcal D}' \to {\mathcal C}/{\mathcal C}'$$ are also an adjoint pair.  Neeman's characterization of perfect complexes on $X$  as compact objects of the derived category $\catD(X)$ of quasi-coherent sheaves, proves that for any exact essentially surjective functor $F\colon \catD(X)\to \catD(Y)$ one has $F(\Perf(X))\subseteq \Perf(Y)$. Thus, being $F$ an exact equivalence, the above discussion proves that there is an induced exact equivalence $\bar{F}\colon D_\text{sg}(X)\to D_\text{sg}(Y)$.
\end{proof}

 \begin{cor} \label{c:isolated} Let $X$ be a quasi-projective scheme with isolated singularities. If  $Y\in FM(X)$  is a Fourier-Mukai partner of $X$, then $Y$ has also isolated singularities and both schemes have the same number of singular points.
 \end{cor}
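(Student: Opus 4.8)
The plan is to descend the given equivalence to the singularity categories and recover the singular locus from them. Since $Y\in\text{FM}(X)$, by the very definition of a Fourier--Mukai partner there is an exact equivalence $F\colon\catD(X)\to\catD(Y)$. As $X$ is quasi-projective it satisfies the (ELF) condition, and since $Y$ is a reasonable scheme which I take to also satisfy (ELF), Lemma \ref{Dsin} applies and produces an exact equivalence $\bar F\colon D_\text{sg}(X)\to D_\text{sg}(Y)$. Passing to idempotent completions via Theorem \ref{t:completion} I obtain $\overline{D_\text{sg}(X)}\simeq\overline{D_\text{sg}(Y)}$, and this is the object I would analyse.

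First I would describe the source. Writing $S_X=\{x_1,\dots,x_n\}$, Corollary \ref{isolated} gives $D_\text{sg}(X)=\langle k(x_1)\oplus\dots\oplus k(x_n)\rangle_{D_\text{sg}(X)}$. Because the skyscrapers $k(x_i)$ have pairwise disjoint support, all groups $\Hom_{D_\text{sg}(X)}(k(x_i),k(x_j)[m])$ vanish for $i\neq j$, so in the idempotent completion the category splits as a completely orthogonal direct sum $\overline{D_\text{sg}(X)}\simeq\mathcal{B}_1\oplus\dots\oplus\mathcal{B}_n$ of nonzero local blocks $\mathcal{B}_i=\langle k(x_i)\rangle$, one for each singular point. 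In the Gorenstein case this is precisely the decomposition $\overline{D_\text{sg}(X)}\simeq\bigoplus_{i=1}^n\sCM(\widehat{\mathcal{O}}_{x_i})$ of Theorem \ref{t:equivalencia}, each summand being Hom-finite and connected; in particular $D_\text{sg}(X)$ is Hom-finite.

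The main obstacle is transferring isolatedness to $Y$: the equivalence $\bar F$ is abstract, so I cannot assume that the generators $\bar F(k(x_i))$ are skyscrapers of $Y$, and a positive-dimensional singular locus of $Y$ must be excluded by an invariant that detects the dimension of the singularities. The cleanest such invariant is Hom-finiteness of the singularity category: isolated singularities are exactly those for which $D_\text{sg}$ is Hom-finite over $k$, whereas for a positive-dimensional singular locus the morphism spaces in $D_\text{sg}$ become infinite-dimensional, as one already sees for $xy=0$ in $\mathbb{A}^3$. Granting this characterization in the required generality, Hom-finiteness of $D_\text{sg}(X)$ passes through $\bar F$ to $D_\text{sg}(Y)$, so $Y$ has isolated singularities; write $S_Y=\{y_1,\dots,y_m\}$. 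Making the ``isolated $\iff$ Hom-finite'' statement precise for not-necessarily-Gorenstein schemes is the heart of the argument; alternatively one may apply Chen's localization theorem (Theorem \ref{th:chen}) to place $S_Y$ inside the support of a finite generating set and then rule out a positive-dimensional component there.

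Finally I would count. With $Y$ known to have isolated singularities, the orthogonality argument of the second paragraph applies verbatim and yields $\overline{D_\text{sg}(Y)}\simeq\mathcal{B}'_1\oplus\dots\oplus\mathcal{B}'_m$, a completely orthogonal sum of $m$ nonzero connected blocks indexed by the singular points of $Y$. Since a decomposition into orthogonal indecomposable (connected) summands is unique, the equivalence $\overline{D_\text{sg}(X)}\simeq\overline{D_\text{sg}(Y)}$ must carry the $n$ blocks coming from $X$ onto the $m$ blocks coming from $Y$. Hence $n=m$, that is, $X$ and $Y$ have the same number of singular points, as claimed.
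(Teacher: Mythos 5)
Your skeleton is exactly the paper's: the printed proof is the two\-/line citation of Lemma \ref{Dsin} together with Corollary \ref{isolated}, and you reproduce that route (descend to $D_\text{sg}$, describe $D_\text{sg}(X)$ via the skyscrapers at the singular points, transfer across $\bar F$). Where you go beyond the paper is in recognising that Corollary \ref{isolated} cannot simply be applied on the $Y$ side: $\bar F\bigl(k(x_1)\oplus\dots\oplus k(x_n)\bigr)$ is a single generator of $D_\text{sg}(Y)$ with no reason to be a sum of skyscrapers, and a thick subcategory generated by one object can perfectly well be the singularity category of a scheme with positive-dimensional singular locus (the non-perfect locus of a single complex need not be finite). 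Diagnosing this as the real obstacle is correct and is more than the paper records.

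The problem is that both devices you introduce to close that gap are granted rather than proved, and neither is a formality. First, the characterization ``isolated singularities $\iff$ $D_\text{sg}$ is Hom-finite'' is, as you yourself say, the heart of the argument: the forward implication is reachable through Theorem \ref{t:equivalencia} and Auslander's finiteness results in the Gorenstein case, but the converse --- that a positive-dimensional singular locus forces infinite-dimensional Hom spaces --- is a theorem requiring a proof or a precise reference, and one example ($xy=0$ in $\mathbb{A}^3$) does not supply it; your fallback via Theorem \ref{th:chen} runs into the same issue, since the support of a finite generating set can be positive-dimensional. Second, your count of singular points rests on the uniqueness of a decomposition of $\overline{D_\text{sg}}$ into orthogonal \emph{indecomposable} blocks, and the indecomposability of each $\langle k(x_i)\rangle$ (equivalently of $\sCM(\hat{\mathcal{O}}_{x_i})$) is asserted, not established --- note that already for a node the class of $k(x)$ decomposes in $D_\text{sg}$ as $(R/(x)\oplus R/(y))[1]$, and connectedness of the block survives only because the two summands are shifts of one another. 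So as written the proposal has two genuine holes; in fairness, the paper's own two-line proof is silent on exactly the same points, and your version at least makes explicit what still has to be supplied.
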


 \begin{proof} This follows from the above lemma using the description of $ D_\text{sg}(X)$ given by Corollary \ref{isolated}.  
 \end{proof}

\section{Kodaira curves and their Fourier-Mukai partners}
 Let $p\colon S\to B$ be a smooth elliptic surface, that is, a flat and projective morphism from a smooth surface $S$ to a curve $C$  whose generic fiber is a smooth elliptic curve. The classification of all possible singular fibers of $p$ dates back to Kodaira \cite{kod}, who also named them. The following list ${\bf L} $ contains all possible fibers divided in three subclasses ${\bf (L_j)}$. From now on we will refer to the fibers in ${\bf L}$ as {\it Kodaira curves}. 
  
\begin{itemize}\item[${\bf (L_1)}$] Those fibers that are {\bf reduced curves}, that is, 

\

\begin{enumerate}
\item[$(I_0)$] a smooth elliptic curve.
\item[ ($I_1$)]  a rational curve with one node.
\item[$(I_N)$] a cycle of $N$ rational smooth curves with $N\geq 2$.
\item[$(II)$] a rational curve with one cusp.
\item[$(III)$] two rational smooth curves forming a tacnode curve.
\item[$(IV)$] three concurrent rational smooth curves in the plane.
\end{enumerate}

\

\item[${\bf (L_2)}$] Those fibers that are {\bf non-multiple and non-reduced curves},  that is, 

\

\begin{enumerate}\item[$(I_N^*)$]  $X=\Theta_1+\Theta_2+\Theta_3+\Theta_4+2\Theta_5+2\Theta_6+\hdots+2\Theta_{N+5}$  for $N\geq 0$ and such that 
$(\Theta_1\cdot \Theta_5)=(\Theta_2\cdot \Theta_5)=(\Theta_3\cdot \Theta_{N+5})=(\Theta_4\cdot \Theta_{N+5})=(\Theta_5\cdot \Theta_6)=\hdots=(\Theta_{N+4}\cdot \Theta_{N+5})=1$.

\

\item[$(II^*)$] $X=\Theta_1+2\Theta_2+3\Theta_3+4\Theta_4+5\Theta_5+6\Theta_6+4\Theta_7+3\Theta_8+2\Theta_9$ where 
$(\Theta_1\cdot \Theta_2)=(\Theta_2\cdot \Theta_3)=(\Theta_3\cdot \Theta_4)=(\Theta_4\cdot \Theta_5)=(\Theta_5\cdot \Theta_6)=(\Theta_6\cdot \Theta_8)=(\Theta_6\cdot \Theta_7)=(\Theta_7\cdot \Theta_9)=1$.

\

\item[$(III^*)$] $X=\Theta_1+2\Theta_2+3\Theta_3+4\Theta_4+3\Theta_5+2\Theta_6+2\Theta_7+\Theta_8$ where $(\Theta_1\cdot \Theta_2)=(\Theta_2\cdot \Theta_3)=(\Theta_3\cdot \Theta_4)=(\Theta_4\cdot \Theta_6)=(\Theta_4\cdot \Theta_5)=(\Theta_5\cdot \Theta_7)=(\Theta_7\cdot \Theta_8)=1$.

\

\item[$(IV^*)$] $X=\Theta_1+2\Theta_2+3\Theta_3+2\Theta_4+2\Theta_5+\Theta_6+\Theta_7$ where $(\Theta_1\cdot \Theta_2)=(\Theta_2\cdot \Theta_3)=(\Theta_3\cdot \Theta_4)=(\Theta_3\cdot \Theta_5)=(\Theta_4\cdot \Theta_6)=(\Theta_5\cdot \Theta_7)=1$.

\end{enumerate}

\

\item[${\bf (L_3)}$] Those fibers that are {\bf multiple curves},  that is, 

\

\begin{enumerate} \item[$(_mI_0)$] $X=mC$ where $C$ is a smooth elliptic curve and $m\geq 2$.

\

\item [$(_mI_1)$] $X=mD$ where $D$ is a rational curve with one node and $m\geq 2$.

\

\item [$(_mI_N)$] $X=m\Theta_1+\hdots +m\Theta_N$ where $m\geq 2$ and  $(\Theta_1\cdot \Theta_2)=(\Theta_2\cdot \Theta_3)=\hdots =(\Theta_{N}\cdot \Theta_1)=1$ and $m\geq 2$.
\end{enumerate}
\end{itemize}

\qed

All the irreducible components $\Theta_i$  of the reducible fibers are smooth rational curves with self intersection -2. 

Recall that the {\it arithmetic genus} of a projective curve $X$ is defined by $g_a(X):=1-\chi(\mathcal{O}_X)$ where $\chi$ is the Euler-Poincar\'e characteristic. Any Kodaira curve $X$ is a Gorenstein projective curve of arithmetic genus one and has trivial dualising sheaf.

Reduced Kodaira curves were characterized by Catenese in \cite{Cata82} as follows.

\begin{prop}[Proposition 1.18 in  \cite{Cata82}]\label{Cat} Let  $X$ be a projective planar reduced connected curve over an algebraically closed field $k$. Then, $X$ is Gorenstein of arithmetic genus one and has trivial dualising sheaf if and only if $X$ is isomorphic to a Kodaira curve in the subclass ${\bf (L_1)}$.
\end{prop}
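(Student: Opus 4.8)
The plan is to prove both implications, treating the two directions quite differently. For the implication that every curve in $\mathbf{(L_1)}$ has the stated properties, I would use that each such curve arises as a reduced fibre $F$ of a relatively minimal smooth elliptic surface $p\colon S\to B$. Being a Cartier divisor in the smooth surface $S$, the fibre $F$ is locally a hypersurface, hence planar and Gorenstein, and it is reduced and connected by hypothesis and because $p_\ast\cO_S=\cO_B$. Flatness of $p$ gives $p_a(F)=1$, the genus of the generic fibre. Finally, adjunction yields $\omega_F\simeq\cO_F(K_S+F)$; since $F=p^{-1}(b)$ for a point $b$ with reduced fibre one has $\cO_S(F)|_F\simeq p^\ast\cO_B(b)|_F\simeq\cO_F$, and the canonical bundle formula $K_S=p^\ast M+\sum_k(m_k-1)F_k$ (the multiple fibres $F_k$ being disjoint from $F$) gives $K_S|_F\simeq\cO_F$, so $\omega_F\simeq\cO_F$. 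For the three irreducible cases one may alternatively argue directly: $\omega_X$ is then a degree-zero line bundle on an integral curve carrying the nonzero section dual to $h^1(\cO_X)=1$, hence trivial.

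For the converse --- the substantial direction --- I would fix an embedding $X\subset S$ into a smooth surface and write $X=\Theta_1+\dots+\Theta_N$ as a reduced divisor. The key computation is, for each component, the degree of $\omega_X\simeq\cO_X$ on $\Theta_i$. Combining adjunction for $X$ and for $\Theta_i$ on $S$ gives
\[
0=\deg_{\Theta_i}\omega_X=(K_S+X)\cdot\Theta_i=\bigl(2p_a(\Theta_i)-2\bigr)+\sum_{j\neq i}\Theta_i\cdot\Theta_j .
\]
If $N\geq2$, connectedness forces $\sum_{j\neq i}\Theta_i\cdot\Theta_j\geq1$, so the displayed identity forces $p_a(\Theta_i)=0$ and $\sum_{j\neq i}\Theta_i\cdot\Theta_j=2$ for every $i$; since $\Theta_i$ is integral this means $\Theta_i\simeq\bbP^1$ and each component meets the rest of $X$ with total intersection multiplicity exactly $2$. (It is here that the hypothesis $\omega_X\simeq\cO_X$, and not merely $p_a(X)=1$, is essential: without it a curve such as an elliptic curve meeting a $\bbP^1$ transversally at one point would survive.) If instead $N=1$, then $X$ is integral with $p_a=1$, so its normalization is $\bbP^1$ and $\delta=1$; the only planar $\delta=1$ singularities are the node and the cusp, giving $(I_1)$ and $(II)$, while the smooth case is the elliptic curve $(I_0)$.

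It then remains to classify, for $N\geq2$, the configurations of $N$ copies of $\bbP^1$ in which every component meets the others with total multiplicity $2$. I would encode this as a multigraph on the components with $\Theta_i\cdot\Theta_j$ edges between $i$ and $j$; each vertex has degree $2$, so the connected multigraph is a single cycle. For $N=2$ this is a double edge, realised geometrically either by two transverse points (the cycle $I_2$) or by one point of intersection multiplicity $2$, i.e.\ a simple tangency of two smooth branches, which is the tacnode $(III)$; for $N\geq3$ it is an $N$-cycle with all intersections transverse and of multiplicity $1$. The only remaining freedom is whether intersection points coincide: three components sharing a point would create a triangle in the multigraph, which is impossible for $N\geq4$ (forcing all nodes distinct, hence $I_N$) but possible for $N=3$, where either the three nodes are distinct (the triangle $I_3$) or all coincide in one ordinary triple point (three concurrent lines, $(IV)$). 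Matching these outcomes with the list yields exactly $\mathbf{(L_1)}$.

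The main obstacle I anticipate is the careful local bookkeeping in the converse: justifying the degree formula when components might a priori be singular, and above all translating the purely numerical data (a $2$-regular dual multigraph together with the total intersection multiplicities) into the correct isomorphism type of singular curve. The delicate points are recognising that intersection multiplicity $2$ between two smooth branches forces a tacnode, that planarity (equivalently the Gorenstein condition) excludes a spatial triple point in favour of the coplanar one in $(IV)$, and that the coincidence pattern of the nodes --- not visible in the dual graph alone --- is what separates $I_3$ from $IV$.
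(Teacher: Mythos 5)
Your argument is essentially correct, but note that the paper does not prove this proposition at all: it is imported verbatim from Catanese (Proposition 1.18 of \cite{Cata82}) and closed with a QED box, so there is no in-paper argument to compare against. What you have written is a sound, self-contained reconstruction along the classical lines of Kodaira's and Catanese's proofs: adjunction on the ambient smooth surface to force $p_a(\Theta_i)=0$ and $\sum_{j\neq i}\Theta_i\cdot\Theta_j=2$ for every component when $N\geq 2$, followed by the combinatorics of the $2$-regular dual multigraph (a single cycle) and the local analysis of the intersection points; the case $N=1$ reduces to the classification of planar $\delta=1$ singularities. Two small points are worth tightening. First, in the forward direction your appeal to the canonical bundle formula presupposes that each type in ${\bf (L_1)}$ is realized as a fibre of a \emph{relatively minimal} smooth elliptic surface; this is harmless here because the paper defines Kodaira curves exactly as such fibres, and for the types with $N\le 3$ one can bypass fibrations entirely, since they are all plane cubics and $\omega_X\simeq\cO_{\bbP^2}(d-3)|_X\simeq\cO_X$ directly. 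Second, your coincidence analysis of the intersection points is correct but stated a little loosely: the clean formulation is that if two distinct pairwise intersection points were to coincide, then at least three components would pass through that point and hence pairwise meet there, which the cycle graph permits only when $N=3$, in which case all three points are forced to coincide (type $(IV)$) or all are distinct (type $(I_3)$). With these remarks your case list matches ${\bf (L_1)}$ exactly, and the hypothesis $\omega_X\simeq\cO_X$ (rather than merely $p_a(X)=1$) is used precisely where you say it is.
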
\qed

As far as we know there is no similar classification for non-reduced Gorenstein curves of genus one and trivial dualising sheaf in the literature.

Let us compute now the Grothendieck groups and the negative $K$-groups of all Kodaira fibers.

\begin{prop} \label{kodairaK} Let $X$ be a Kodaira curve. Then the Grothendieck group $G_0(X)$ of coherent sheaves on $X$ is isomorphic to $
\mathbb{Z}^{N_X+1}$ where $N_X$ is the number of irreducible components of $X$.
\end{prop}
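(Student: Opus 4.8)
The plan is to reduce the computation to the reduced curve and then apply Proposition \ref{p:grupoK}. First I would invoke the Dévissage Proposition: the closed immersion $i\colon X_{\mathrm{red}}\hookrightarrow X$ induces an isomorphism $i_\ast\colon G_0(X_{\mathrm{red}})\iso G_0(X)$, and since $X$ and $X_{\mathrm{red}}$ share the same underlying topological space they have the same irreducible components, so $N_X=N_{X_{\mathrm{red}}}$. It therefore suffices to compute $G_0(X_{\mathrm{red}})$ for each reduced Kodaira type.

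Next I would split into cases according to the irreducible components. Whenever the fiber is reducible — that is $I_N$ with $N\geq 2$, $III$, $IV$, every curve in $\mathbf{(L_2)}$, and ${}_mI_N$ with $N\geq 2$ — each component of $X_{\mathrm{red}}$ is a smooth rational curve $\bbP^1$ (as recorded after the Kodaira list), so Proposition \ref{p:grupoK} applies directly and yields $G_0(X_{\mathrm{red}})\simeq\bbZ^{N_X+1}$. The only reduced fibers left are the irreducible ones ($N_X=1$): the elliptic curve $I_0$, the nodal rational curve $I_1$ and the cuspidal rational curve $II$ (the multiple fibers ${}_mI_0,\ {}_mI_1$ having already been reduced to $I_0,\ I_1$ by the first step).

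For $I_1$ and $II$ the single component is a rational curve whose normalization is $\bbP^1$ but which is not itself $\bbP^1$, so Proposition \ref{p:grupoK} does not apply on the nose; I would instead re-run its proof, observing that it uses only (i) that the normalization $\widetilde{X_{\mathrm{red}}}$ is a disjoint union of copies of $\bbP^1$ dominating $X_{\mathrm{red}}$ through a Chow envelope, whence $\pi_\ast\colon G_0(\widetilde{X_{\mathrm{red}}})\to G_0(X_{\mathrm{red}})$ is surjective and $G_0(X_{\mathrm{red}})$ is generated by the class $[\cO_{X_{\mathrm{red}}}]$ together with the class of a single closed point; and (ii) that $A_0(X_{\mathrm{red}})=\bbZ$ and $A_1(X_{\mathrm{red}})=\bbZ$, so that the Riemann-Roch isomorphism $\tau_{X_{\mathrm{red}}}\otimes\bbQ$ forces those two generators to be a $\bbZ$-basis. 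This gives $G_0\simeq\bbZ^2=\bbZ^{N_X+1}$ for $I_1$ and $II$ as well.

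The step I expect to be the main obstacle is the smooth elliptic fiber $I_0$ (and with it ${}_mI_0$). Here the component $E$ has genus one, its normalization is $E$ itself, and $A_0(E)=\Pic(E)$ is far from being $\bbZ$: the subgroup $\Pic^0(E)=E(k)$ is nontrivial and divisible, so $G_0(E)\simeq\bbZ^2\oplus\Pic^0(E)$ is not even finitely generated. The rational-equivalence step of Proposition \ref{p:grupoK} therefore breaks down, and the clean formula $\bbZ^{N_X+1}$ cannot hold verbatim for $I_0$. I would handle this by recording $I_0$ (and ${}_mI_0$) as an explicit exception; for the subsequent Fourier-Mukai arguments this is harmless, since by Corollary \ref{isoK} any partner $Y$ would satisfy $G_0(Y)\simeq G_0(E)$, and a curve sharing this non-finitely-generated group is, by the classification of genus one Gorenstein curves, forced to be $E$ itself.
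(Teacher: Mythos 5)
Your reduction to $X_{\text{red}}$ by d\'evissage and your subsequent case analysis follow essentially the same route as the paper, which also splits into ``$X_{\text{red}}$ integral'' versus ``all components isomorphic to $m_i\mathbb{P}^1$, apply Proposition \ref{p:grupoK}''. For types $I_1$ and $II$ you are in fact more careful than the paper: the paper simply asserts that rank and degree give $G_0\simeq\mathbb{Z}^2$, whereas you correctly note that Proposition \ref{p:grupoK} does not apply verbatim (a nodal or cuspidal rational curve is not isomorphic to $m\mathbb{P}^1$) and that one must re-run its Chow-envelope/Riemann--Roch argument using $A_0=\mathbb{Z}$ and $A_1=\mathbb{Z}$; that is exactly the justification needed to see that the kernel of $(\operatorname{rk},\deg)$ vanishes for these curves.

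Your main observation --- that the statement fails for $I_0$ and ${}_mI_0$ --- is correct, and it points to a genuine error in the paper's own proof rather than a gap in yours. The paper claims that when $X_{\text{red}}$ is a smooth elliptic curve $E$ rank and degree give $G_0(X)\simeq\mathbb{Z}^2$; but $E$ is smooth, so $G_0(E)=K^0(E)\simeq\mathbb{Z}\oplus\Pic(E)\simeq\mathbb{Z}^2\oplus E(k)$ by the paper's own Proposition \ref{Kcurvas}(1), and $E(k)=\Pic^0(E)(k)$ is a nontrivial divisible group, so $G_0$ is not even finitely generated. The proposition therefore needs the exception you propose, and the way it is used in Theorem \ref{t1} (extracting $N_X$ from $G_0$) still survives, e.g.\ because the quotient of $G_0(X)$ by its maximal divisible subgroup is $\mathbb{Z}^{N_X+1}$ in every case. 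The one overstatement in your repair is the final sentence: over an algebraically closed field of characteristic zero all elliptic curves have abstractly isomorphic groups of $k$-points, so $G_0(Y)\simeq G_0(E)$ forces $Y$ to be of type $I_0$ or ${}_mI_0$ but cannot by itself force $Y\simeq E$; the identification of the partner in the smooth case has to come, as in the proof of Theorem \ref{t1}, from the Picard scheme rather than from $G_0$.
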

\begin{proof} If $X_{\text{red}}$ is an integral curve (that is, a smooth elliptic curve,  a rational curve with one node or a rational curve with one cusp), the rank and the degree of a coherent sheaf on $X_{\text{red}}$ give the isomorphism $G_0(X)\simeq \mathbb{Z}^2$. Otherwise, since all irreducible components of $X$ are isomorphic to $m_i\mathbb{P}^1$ for some integer  $m_i>0$ and $\mathbb{P}^1$ the projective line, the result follows from Proposition \ref{p:grupoK}.
\end{proof}

\begin{prop}\label{kodairaKnegativo} Let  $X$ be a Kodaira curve. \begin{enumerate}
\item If $X_\text{red}$ is of type $I_N$ for some $N\geq 1$,  then  $K^{-1}(X)\simeq\mathbb{Z}$ and $K^i(X)=0$ for any $i< -1$.
\item Otherwise,  $K^i(X)=0$ for any $i\leq -1$.
\end{enumerate}
\end{prop}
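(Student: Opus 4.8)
The plan is to reduce the entire statement to Proposition \ref{Kcurvas} and then to a purely combinatorial computation of the loop number $\lambda(X_{\text{red}})$ of the bipartite graph. By parts (2) and (3) of Proposition \ref{Kcurvas} one has $K^{-1}(X)\simeq K^{-1}(X_{\text{red}})\simeq \mathbb{Z}^{\lambda(X_{\text{red}})}$ and $K^i(X)=0$ for every $i\leq -2$. Thus the vanishing assertions ($K^i(X)=0$ for $i<-1$ in case (1), and for $i\leq -2$ in case (2)) are immediate, and the only thing left to prove is that $\lambda(X_{\text{red}})=1$ when $X_{\text{red}}$ is of type $I_N$ with $N\geq 1$ and $\lambda(X_{\text{red}})=0$ in every other case.

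Since $X$ is connected, so are $X_{\text{red}}$ and its bipartite graph $\Gamma_{X_{\text{red}}}$, whence $\lambda(X_{\text{red}})=E-V+1$, where $V=|S_{X_{\text{red}}}|+N$ is the number of singular points plus the number $N$ of irreducible components, and $E=\sum_{P}|\pi^{-1}(P)|$ is the total number of local branches at the singular points. I would record, for each singularity occurring in a Kodaira fiber, its number of branches: a node or a tacnode contributes two branches lying on distinct components, a cusp is unibranch, and the ordinary triple point of type $IV$ has three branches. For a purely nodal configuration $\Gamma_{X_{\text{red}}}$ is exactly the once-subdivided dual graph, so $\lambda(X_{\text{red}})$ equals the first Betti number of that graph, in agreement with Corollary \ref{c:odaseshadri}.

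I would then run through the list. For the integral reduced fibers the normalization is irreducible ($N=1$): type $I_0$ is smooth ($E=0$, $\lambda=0$), type $I_1$ has one node ($E=2$, $V=2$, $\lambda=1$), and type $II$ has one unibranch cusp ($E=1$, $V=2$, $\lambda=0$). For the remaining reduced fibers all components are smooth rational and the count is direct: $I_N$ with $N\geq 2$ is a cycle of $N$ lines with $N$ nodes, giving $V=E=2N$ and $\lambda=1$; the tacnode fiber $III$ gives $V=3,E=2,\lambda=0$, and the triple-point fiber $IV$ gives $V=4,E=3,\lambda=0$. For the reduced curves underlying $I_N^*,II^*,III^*,IV^*$ every listed intersection is a node, so $\lambda$ is the first Betti number of the dual graph; these are the affine Dynkin diagrams $\tilde D_{N+4},\tilde E_8,\tilde E_7,\tilde E_6$, which are trees, so $\lambda=0$ (this is confirmed by the same count, e.g. $II^*$ has $9$ components and $8$ nodes, giving $V=17,E=16,\lambda=0$). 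Finally, for the multiple curves ${}_mI_0,{}_mI_1,{}_mI_N$ the reduced curve is respectively a smooth elliptic curve, a nodal rational curve and a cycle of lines, i.e. of type $I_0,I_1,I_N$, so $\lambda=0,1,1$.

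The only genuine content, and the step I would treat most carefully, is reading off the incidence data (the number of branches at each singular point and which components they lie on) from Kodaira's description of each fiber, and checking that a cycle appears in $\Gamma_{X_{\text{red}}}$ exactly for the $I_N$ type. Equivalently, among the graphs attached to Kodaira fibers only the $\tilde A$ series (the $I_N$ fibers) contains a loop, all others being trees; this is precisely what produces the dichotomy in the statement.
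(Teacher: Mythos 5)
Your proof is correct and follows exactly the route the paper takes: reduce to Proposition \ref{Kcurvas} and then check that $\lambda(X_{\text{red}})=1$ precisely when $X_{\text{red}}$ is of type $I_N$ and $\lambda(X_{\text{red}})=0$ otherwise. The paper simply asserts this loop count without detail, whereas you verify it case by case; the content and structure of the argument are the same.
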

\begin{proof} This follows from Proposition \ref{Kcurvas} and the fact that $\Gamma_{X_{\text{red}}}$ has one loop if $X_\text{red}$ is of type $I_N$ and it has no loops for any other Kodaira curve. 
\end{proof}

Our next aim is to know more about Fourier-Mukai partners of Kodaira curves. In this sense, we have the following

\begin{thm} \label{t1}  Let $X$ and $Y$ be two Kodaira curves such that there is an exact equivalence $\Phi\colon \catD(X)\simeq \catD(Y)$ between their derived categories of quasi-coherent sheaves. Then,
\begin{enumerate}
\item  $X\in {\bf (L_j)}$ if and only if $Y\in {\bf (L_j)}$ for ${\bf j}=1,2,3$.
\item If  ${\bf j}=1$, then $X$ is isomorphic to $Y$. In this case, $\FM(X)=\{X\}$.
\end{enumerate}
\end{thm}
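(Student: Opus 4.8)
The plan is to extract from the equivalence $\Phi$ a short list of Fourier–Mukai invariants and to check that they already determine the subclass ${\bf (L_j)}$, and inside ${\bf (L_1)}$ the precise Kodaira type. I would use three invariants. First, the number $N_X$ of irreducible components, which by Proposition \ref{kodairaK} is encoded in $G_0(X)\simeq\bbZ^{N_X+1}$ and is preserved because $G_0(X)\simeq G_0(Y)$ (Corollary \ref{isoK}). Second, the dimension of the singular locus: by Corollary \ref{c:isolated}, together with the preservation of smoothness recorded in Section \ref{propiedades}, having isolated singularities is a property shared by $X$ and $Y$. Third, the abstract group $\Pic(X)$, preserved by Corollary \ref{c:Pic}; the decisive piece of information here is the isomorphism type of the one–dimensional connected group $\Pic^0(X)$, which I read off from the torsion subgroup of $\Pic(X)$.

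For Part (1) I would first isolate ${\bf (L_1)}$. A reduced Kodaira curve is either smooth ($I_0$) or has finitely many isolated singular points, whereas a curve in ${\bf (L_2)}$ or ${\bf (L_3)}$ has a component of multiplicity $\geq 2$, along which it is non–reduced and hence singular, so its singular locus is one–dimensional. Thus $X\in{\bf (L_1)}$ iff $X$ is smooth or has isolated singularities, and this passes to $Y$ by Corollary \ref{c:isolated} and smoothness preservation. To split ${\bf (L_2)}$ from ${\bf (L_3)}$ I would compute $\Pic^0$, using Proposition \ref{Pic} and the equality $\dim\Pic^0(X)=h^1(X,\mathcal O_X)=1$ (recall $g_a(X)=1$ and $h^0(\mathcal O_X)=1$). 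When $X\in{\bf (L_2)}$, the reduced curve $X_{\text{red}}$ is a tree of smooth rational curves (its dual graph is a $\widetilde D$– or $\widetilde E$–diagram), so $\Pic^0(X_{\text{red}})=0$ and the unipotent kernel $U_X$ of $\Pic(X)\to\Pic(X_{\text{red}})$ must fill all of $\Pic^0(X)$, giving $\Pic^0(X)\simeq\mathbb{G}_a$, which is torsion free. When $X\in{\bf (L_3)}$, instead, $X_{\text{red}}$ is a reduced genus–one curve of type $I_0$ or $I_N$, so $\dim\Pic^0(X_{\text{red}})=1$ forces $U_X=0$ and $\Pic^0(X)\simeq\Pic^0(X_{\text{red}})$ is $E$ or $\mathbb{G}_m$, each with nontrivial torsion. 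Since $\Pic(X)\simeq\Pic(Y)$, the torsion subgroup of $\Pic$ then forces $X$ and $Y$ into the same subclass. (The negative $K$–group of Proposition \ref{kodairaKnegativo} gives a corroborating invariant, being $\bbZ$ exactly for the $I_N$ and ${}_mI_N$ types.)

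For Part (2), Part (1) already gives $Y\in{\bf (L_1)}$, and I would determine the Kodaira type from the pair $(N_X,\Pic^0(X))$. The torsion of $\Pic(X)$ separates $\Pic^0=E$ (torsion $(\bbQ/\bbZ)^2$), $\Pic^0=\mathbb{G}_m$ (torsion $\bbQ/\bbZ$) and $\Pic^0=\mathbb{G}_a$ (no torsion); combined with $N_X$ this distinguishes every reduced type, namely $(1,E)=I_0$, $(N,\mathbb{G}_m)=I_N$, $(1,\mathbb{G}_a)=II$, $(2,\mathbb{G}_a)=III$, $(3,\mathbb{G}_a)=IV$. Hence $X$ and $Y$ are of the same type. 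For each singular reduced type the curve is rigid, i.e.\ determined up to isomorphism by its type, so $X\simeq Y$ at once; in the smooth case $I_0$ I would invoke the theorem of Hille and Van den Bergh \cite{HiVdB} that an elliptic curve has no nontrivial Fourier–Mukai partner. Finally, for $\FM(X)=\{X\}$ I would show that any partner $Y$ of a reduced Kodaira curve is again a reduced Kodaira curve: $Y$ is connected of dimension one and arithmetic genus one (Section \ref{propiedades}), has isolated singularities by Corollary \ref{c:isolated}, hence is generically reduced, and, being Gorenstein with trivial dualising sheaf and planar, falls under Catanese's characterization (Proposition \ref{Cat}); the isomorphism statement above then applies.

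I expect the technical heart to be the computation of $\Pic^0(X)$ for the non–reduced fibers: verifying, fiber by fiber, that the unipotent kernel $U_X$ has precisely the dimension $g_a(X)-g_a(X_{\text{red}})$ predicted by Proposition \ref{Pic}, so that $\Pic^0(X)$ is additive exactly in class ${\bf (L_2)}$. The second delicate point is the very last step — arguing that a Fourier–Mukai partner of a reduced curve is again reduced (so that Proposition \ref{Cat} is applicable); this is where one must combine the isolated–singularity statement of Corollary \ref{c:isolated} with the absence of embedded points, and it is the least automatic part of the argument.
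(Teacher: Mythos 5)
Your proposal is correct and follows essentially the same route as the paper: the number of irreducible components via $G_0$ (Corollary \ref{isoK} and Proposition \ref{kodairaK}), reducedness via isolated singularities and $D_\text{sg}$ (Corollary \ref{c:isolated}) combined with Catanese's characterization (Proposition \ref{Cat}), and the Picard group computations \eqref{PicL2}--\eqref{PicL3} to separate ${\bf (L_2)}$ from ${\bf (L_3)}$. The only differences are organizational: within ${\bf (L_1)}$ you classify by the pair $(N_X,\Pic^0)$ read off from the torsion of $\Pic$, whereas the paper counts singular points and invokes $\Pic$ only to separate the node from the cusp; your torsion-subgroup observation also makes precise the paper's assertion that the extensions \eqref{PicL2} and \eqref{PicL3} give non-isomorphic abstract groups.
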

 
\begin{proof} Notice first that since $X$ is a projective curve without embedded points, $T_0(\mathcal{O}_X)=0$ and consequently all descriptions of the set $\FM(X)$  given in \eqref{partners} are valid and $Y\in \FM(X)$.

Thus, the two Fourier-Mukai partners $X$ and $Y$ share all the geometric properties stated in Section 6. In particular, 
the isomorphism $G_0 (X)\simeq G_0 (Y)$ between the Grothendieck groups of coherent sheaves implies,  by Proposition \ref{kodairaK}, that $X$ and $Y$ have the same number $N_X=N_Y$ of irreducible components. Furthermore, by Corollary \ref{c:Pic}, we have an isomorphism $\phi\colon \Pic(X)\simeq \Pic(Y)$ between their Picard schemes.

Suppose first that $X$ belongs to ${\bf (L_1)}$. If $X$ is a smooth curve, then $Y$ has to be also smooth and the isomorphism between their Picard schemes induces an isomorphism $\Pic^0(X)\simeq \Pic^0(Y)$ between their Jacobians. Since the Jacobian of a smooth elliptic curve is isomorphic to itself,  we have $X\simeq Y$ and one concludes the proof of the theorem in the smooth case. If $X$ is singular, since $X$ has only isolated singularities, by Corollary \ref{c:isolated}  the same is true for $Y$ which means that $Y$ is also reduced. Then $Y$ belongs to ${\bf (L_1)}$ because of  Proposition \ref{Cat}. Moreover, Corollary \ref{c:isolated} implies that  $X$ and $Y$ have the same number of singular points.
Having the same number of irreducible components and the same number of singular points, to conclude that $\FM(X)=\{X\}$ the only thing to check is that a rational curve with one node (type $I_1$) and a rational curve with one cusp (type $III$) cannot be Fourier-Mukai partners. Let $C$ be a curve of type $I_1$ and let $D$ be a curve of type $III$.
By Corollaries \ref{c:grupoaditivo} and \ref{c:odaseshadri}, we have  the exact sequences
$$0\to \mathbb{G}_m\to \Pic{C}\to \Pic{\mathbb{P}^1}\to 0$$
$$0\to \mathbb{G}_a\to \Pic{D}\to \Pic{\mathbb{P}^1}\to 0 $$
where $ \mathbb{G}_m$ (resp. $\mathbb{G}_a$) is the multiplicative (resp. additive) group. Then,
 $\Pic(C)$ is not isomorphic to  $\Pic(D)$ and we 
finish the proof in this case.

Suppose now that $X$ belongs to ${\bf (L_2)}$.  The Picard scheme of $X$ is given by an exact sequence 
\begin{equation}\label{PicL2}
0\to \mathbb{G}_a\to \Pic(X)\to \mathbb{Z}^{N_X}\to 0\, .
\end{equation}
Indeed,  in this case $X_{\text{red}}$ is always a tree-like curve (see \cite{LM05}). Then, by Corollary \ref{c:odaseshadri}, one has $$\Pic(X_{\text{red}})\simeq \prod_{i=1}^{N_X}\Pic (\mathbb{P}^1)\simeq \mathbb{Z}^{N_X}$$

According to Proposition \ref{Pic}, the  kernel of the epimorphism $\Pic(X)\to \Pic(X_\text{red})$ is a uniponent group $U_X$ which is a successive extension of additive groups of dimension equal to $h^1(X, \mathcal{O}_X)-h^1(X_\text{red},\mathcal{O}_{X_{\text{red}}})$. Since $X$ is connected and $\omega_X\simeq \mathcal{O}_X$, one gets $U_X\simeq \mathbb{G}_a$.

On the other hand,  if $Y$ belongs to ${\bf (L_3)}$, one has an isomorphism $\Pic(Y)\simeq \Pic(Y_\text{red})$ because in this case the kernel $U_Y$ has dimension $h^1(Y, \mathcal{O}_Y)-h^1(Y_\text{red},\mathcal{O}_{Y_{\text{red}}})=0$. Then, by Corollary  \ref{c:odaseshadri}, the Picard scheme of $Y$ is given by an exact sequence 
\begin{equation}\label{PicL3}
0\to \mathbb{G}_m\to \Pic(Y)\to \mathbb{Z}^{N_Y}\to 0\,. 
\end{equation}
By \eqref{PicL2} and \eqref{PicL3}, there are no isomorphisms between $\Pic(X)$ and  $\Pic(Y)$. Thus,  $X$ and $Y$ cannot be Fourier-Mukai partners finishing the proof.
\end{proof}

In the reduced case it is the derived category of singularities $D_\text{sg}(X)$ what allows to distinguish the Fourier-Mukai partners. If $X$ has only isolated singularities, this category is well described by Theorem \ref{t:equivalencia}. Very little is known about this category in the case of schemes with non-isolated singularities. A first property is given by the  following 

\begin{thm} \label{t2} Let $X$ be a Kodaira curve in the subclass  ${\bf (L_2)}$. Then the derived category of singularities $D_\text{sg}(X)$ of $X$ is idempotent complete.
\end{thm}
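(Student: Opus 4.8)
The plan is to reduce idempotent completeness of $D_\text{sg}(X)$ to the vanishing of the negative $K$-group $K^{-1}(X)$, which Proposition \ref{kodairaKnegativo} already computes to be zero for every curve in ${\bf (L_2)}$. The first step is to invoke Thomason's classification of dense triangulated subcategories. Since $X$ satisfies the (ELF) condition, $D_\text{sg}(X)$ is essentially small, and its inclusion $\iota\colon D_\text{sg}(X)\hookrightarrow \overline{D_\text{sg}(X)}$ into the idempotent completion provided by Theorem \ref{t:completion} realizes $D_\text{sg}(X)$ as a dense subcategory (every object of the completion is a direct summand of an object of $D_\text{sg}(X)$). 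By Thomason's theorem dense subcategories correspond to subgroups of $K_0$, so $D_\text{sg}(X)$ coincides with its idempotent completion exactly when the induced map $\iota_\ast\colon K_0(D_\text{sg}(X))\to K_0(\overline{D_\text{sg}(X)})$ is surjective. It therefore suffices to establish this surjectivity.

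To compute $K_0(\overline{D_\text{sg}(X)})$ I would apply the localization theorem for negative (nonconnective) $K$-theory to the sequence $\Perf(X)\to \cdbc{X}\to D_\text{sg}(X)$ defining the singularity category as a Verdier quotient. After passing to the idempotent completion of the quotient, this yields an exact sequence
$$K_0(\Perf(X))\to K_0(\cdbc{X})\xrightarrow{\alpha} K_0(\overline{D_\text{sg}(X)})\xrightarrow{\beta} K_{-1}(\Perf(X))\to K_{-1}(\cdbc{X}).$$
Here I identify the terms using the results recalled earlier: $K_0(\Perf(X))=K^0(X)$ and $K_{-1}(\Perf(X))=K^{-1}(X)$ by the definition of the negative $K$-groups together with Theorem \ref{vb}, while $K_0(\cdbc{X})\simeq G_0(X)$. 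The final term $K_{-1}(\cdbc{X})$ is the first negative $G$-theory group of $X$, which vanishes for the noetherian scheme $X$.

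The heart of the argument is then a short diagram chase. The map $\alpha$ factors as $\iota_\ast\circ q_\ast$ through the localization map $q_\ast\colon K_0(\cdbc{X})\to K_0(D_\text{sg}(X))$, which is surjective because every object of a Verdier quotient is the image of an object upstairs; hence $\operatorname{im}(\iota_\ast)=\operatorname{im}(\alpha)=\ker\beta$. Since $K_{-1}(\cdbc{X})=0$, the map $\beta$ is surjective, so $\iota_\ast$ is surjective if and only if $\beta=0$, that is, if and only if $K^{-1}(X)=0$. For $X\in{\bf (L_2)}$ the reduced curve $X_{\text{red}}$ is never of type $I_N$, so Proposition \ref{kodairaKnegativo}(2) gives $K^{-1}(X)=0$, and we conclude that $D_\text{sg}(X)$ is idempotent complete.

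I expect the main obstacle to be the careful bookkeeping of the external $K$-theoretic inputs rather than anything internal to the curves: namely, citing the negative $K$-theory localization sequence in the correct form (it holds only after idempotently completing the quotient, which is precisely why the completion $\overline{D_\text{sg}(X)}$ appears) and justifying the vanishing of negative $G$-theory $K_{-1}(\cdbc{X})=0$ for noetherian $X$. Once these are in place, the equivalence ``$D_\text{sg}(X)$ idempotent complete $\iff K^{-1}(X)=0$'' follows formally, and Proposition \ref{kodairaKnegativo} supplies the required vanishing.
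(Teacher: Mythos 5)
Your proof is correct and follows essentially the same route as the paper: the paper reduces idempotent completeness of $D_\text{sg}(X)=\cdbc{X}/\Perf(X)$ to the vanishing $K^{-1}(X)=0$ by citing Remark 1 of Schlichting's \emph{Negative $K$-theory of derived categories}, and then invokes Proposition \ref{kodairaKnegativo} exactly as you do. The only difference is that you unpack the proof of Schlichting's criterion (Thomason's density theorem, the localization sequence in nonconnective $K$-theory, and the vanishing of negative $G$-theory) where the paper simply cites it as a black box.
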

\begin{proof} The triangulated categories $\cdbc{X}$ and $\Perf(X)$ are both idempotent complete and we have a full triangle embedding $\Perf(X)\subset \cdbc{X}$. By Proposition \ref{kodairaKnegativo}, we know that $K^{-1}(X)=0$ for any Kodaira curve $X$ in the subclass  ${\bf (L_2)}$. According to  Remark 1 in \cite{Schilchting06} this vanishing result implies that the Verdier quotient $D_\text{sg}(X)= \cdbc{X}/\Perf(X)$ is idempotent complete.
\end{proof}

 \bibliographystyle{siam}

\def\cprime{$'$}

  \end{document}